\pgfplotsset{compat=1.15}
\newtheorem{theorem}{Theorem}[section]
\newtheorem{thm}[theorem]{Theorem}
\newtheorem{lemma}[theorem]{Lemma}
\newtheorem{prop}[theorem]{Proposition}
\newtheorem{remark}{Remark}[section]
\def\R{\mathbb R}
\def\N{\mathbb N}
\def\Z{\mathbb Z}
\numberwithin{equation}{section}
\DeclareMathOperator{\dive}{div}
\tikzset{every picture/.style={execute at begin picture={
   \shorthandoff{:;!?};}
}}
\begin{document}
\title[Euler-Navier-Stokes system]{The pressureless Euler-Navier-Stokes system}

\author[Valentin Lemarié]{Valentin Lemarié}
 {\begin{center}
\begin{abstract} 
In this paper, we study the well-posedness of the pressureless Euler-Navier-Stokes system in $\R^d$ (with $d\geq 2$) in the critical regularity setting for a density close to $0$. We prove a global existence result for small data for this system, and then give optimal time decay estimates.
 \end{abstract}\end{center}}
\maketitle

\section{Introduction}
In this article, we focus on the pressureless Euler-Navier-Stokes system:
\begin{eqnarray}\label{Euler-Navier-Stokes1}
\left\{\begin{array}{l}
      \partial_t \rho+\dive(\rho w)=0, \\
     \partial_t (\rho w)+\dive(\rho w \otimes w)=-\rho(w-u), \\
     \partial_t u+\dive(u \otimes u)+\nabla P=\Delta u+\rho(w-u), \\
     \dive u=0,
\end{array} \right.
\end{eqnarray} with the initial data $(\rho,w,u)_{|t=0}=(\rho_0,w_0,u_0)$ around the constant state $(\overline{\rho}, \overline{w}, \overline{u})=(0,0,0)$.

The unknown functions $\rho=\rho(t,x)$ and $w=w(t,x)$, depending on time $t\geq 0$ and position $x\in\R^d$ with $d\geq 2$, respectively represent the non-negative density and velocity of the fluid flow satisfying the pressureless Euler equation; $u=u(t,x)$ and $P=P(t,x)$ are the velocity and pressure for the incompressible fluid satisfying the Navier-Stokes equation.

The Euler and Navier-Stokes equations play a central role in modelling fluid flows, in the ideal and viscous regimes respectively. While each of these dynamics is well studied independently, physical situations can lead to hybrid models, combining a viscous (incompressible) part and another of hyperbolic type, where pressure is neglected. This is the context of the pressureless Euler-Navier-Stokes system, which couples the compressible - pressureless - Euler equation to the incompressible Navier-Stokes equation. This system, although little studied in its own right, is of interest both physically and mathematically.

From a physical point of view, this type of coupling appears naturally in the modelling of dilute suspensions, when the particulate phase is described by simplified fluid dynamics (without pressure) and its interaction with the viscous carrier fluid remains significant. The system thus obtained retains the effects of transport, inertia and inter-phase friction, but ignores the effects of pressure in the particulate phase, which corresponds to a limit of low internal interaction in this phase. This type of simplification is motivated in particular by the study of the asymptotic regimes of the Vlasov-Navier-Stokes system, the mathematical study of which has attracted a great deal of attention over the last 20 years, due to its numerous applications in various fields: medicine \cite{Baranger}, diesel engine combustion \cite{O Rourke}, etc... By including a parameter $\varepsilon$ tending to $0$ in this system, numerous hydrodynamic limits resulting have been studied (see, for example, the article \cite{Michel et Daniel} by D. Han-Kwan and D. Michel or \cite{LucasToulouse} by L. Ertzbischoff).  However, the mathematical link, in the sense of the convergence of solutions of the Vlasov-Navier-Stokes system to those of the pressureless Euler-Navier-Stokes system, is still an open problem.

 The pressureless Euler-Navier-Stokes system that we propose to study here is a simplified but representative model, capturing both the fluid/particle coupling mechanisms and a rich mathematical structure.

From a mathematical point of view, global existence of solutions was established by Y.-P. Choi, J. Jung, and J. Kim in \cite{Choi2, Choi}. In the latter work, the authors consider small initial data in Sobolev spaces of non-critical regularity, together with an additional $L^1$ condition on $\rho_0$ and $u_0$. More recently, the pressureless Euler--Navier--Stokes system was also investigated in \cite{Danchin2} through a completely different approach based on the conservation structure of the equations. In particular, the authors establish global well-posedness in dimension three without any smallness assumption on the density, which is a remarkable result.

In contrast, we investigate the problem in a critical regularity framework within suitable Besov spaces, which in particular ensures that the density remains bounded in $L^\infty$. This property is crucial for preserving the strict positivity of the density, a necessary ingredient for rigorously justifying the equivalence between \eqref{Euler-Navier-Stokes1} and its non-conservative formulation \eqref{Euler-Navier-Stokes2}. Moreover, our functional framework is closely related to the multiphase formulation of kinetic-fluid models and therefore appears particularly well suited for future investigations of the hydrodynamic limit from the Vlasov--Navier--Stokes system towards the pressureless Euler--Navier--Stokes system. Finally, when assuming an additional $L^1$ condition on the velocities, which allows us to control the $L^\infty$ norm of the density, we obtain optimal decay estimates. Under the same regularity assumptions as in \cite{Choi}, we also recover the decay rates established in their work.

\section{Main results and sketch of the proof}
In this section, we present and motivate the functional spaces used.

Then, in a second step, we state the results and outline the proof.

\subsection{Functional spaces}~\\
Before setting out the main results of this article, we briefly explain the different notations and definitions used throughout the document. 

We will refer to $C>0$ a constant independent of $\varepsilon$ and time and $f\lesssim g$ will mean $f\leq C g$. For all Banach space $X$ and all functions $f,g\in X$, we set up $\|(f,g)\|_X\mathrel{\mathop:=}\|f\|_X+\|g\|_X$. 
\\
We denote by $L^2(\R_+;X)$ the set of measurable functions $f:[0,+\infty[\rightarrow X$ such that $t\mapsto \|f(t)\|_{X}$ is in $L^2(\R_+)$ and let us write $\|\cdot \|_{L^2(X)}\mathrel{\mathop:}=\|\cdot \|_{L^2(\R_+;X)}$.

In this article, we use a classical decomposition in Fourier space, called Littlewood-Paley's homogeneous dyadic decomposition.

Let $\varphi$ a non-negative regular function on $\R^d$ with support in the annulus \begin{equation}\label{anneau A}\mathcal{A}\mathrel{\mathop:}=\{\xi\in\R^d,\:  3/4\leq|\xi|\leq 8/3\}\end{equation} and satisfying $$\sum_{j\in\Z}\varphi(2^{-j}\xi)=1,\qquad \xi\not=0.$$ 

For all $j\in\Z$, dyadic homogeneous blocks $\dot \Delta_j$ and the low-frequency truncation operator $\dot S_j$ are defined by 
$$\dot \Delta_j \mathrel{\mathop:}=\mathcal{F}^{-1}(\varphi(2^{-j}\cdot)\mathcal{F}u), \quad \dot S_j u\mathrel{\mathop:}=\sum_{j'\leq j-1}\dot\Delta_{j'} \ u,$$ 
where $\mathcal{F}$ and $\mathcal{F}^{-1}$ denote the Fourier transform and its inverse respectively. 

By construction, $\dot\Delta_j$ is a localization operator around the frequency of magnitude $2^j$.

From now on, we will use the following shorter notation : $$ u_j:=\dot\Delta_j u.$$

Let $\mathcal{S}_h'$ the set of tempered distributions $u$ on $\R^d$ such that $$\displaystyle \underset{j\to -\infty}{\lim}\|\dot S_j u\|_{L^\infty}=0.$$ we then have : $$u=\sum_{j\in \Z} u_j \ \in \mathcal{S}', \quad \dot S_j u=\sum_{j'\leq j-1} u_{j'}, \ \forall u\in \mathcal{S}_h'.$$

By Bernstein's Lemma (Lemma 2.1 in \cite{BCD}), we have by definition of the annulus \eqref{anneau A} and dyadic blocks $\dot\Delta_j$,the existence of a constant $c_B$ satisfying the following inequality for all $u\in\mathcal{S}'(\R^d)$ : 
\begin{equation}\label{constante de Bernstein}
   c_B^{-\frac{k+1}{2}}2^{jk}\|\dot\Delta_j u\|_{L^2} \leq \|D^k \dot\Delta_j u\|_{L^2}\leq c_B^{\frac{k+1}{2}}2^{jk}\|\dot\Delta_j u\|_{L^2}.
\end{equation}

In terms of these dyadic blocks, the homogeneous Besov spaces $\dot B_{2,r}^s$ for all $s\in\R$ are defined by : $$\dot B_{2,1}^s\mathrel{\mathop:}=\left\{u\in \mathcal{S}_h' \middle| \|u\|_{\dot B_{2,r}^s}\mathrel{\mathop:}=\|\{2^{js}\|u_j\|_{L^2}\}_{j\in\Z}\|_{l^r}<\infty\right\}.$$

As we will need to restrict our Besov norms to specific regions of the low and high frequencies, we introduce the following notations : \begin{center}
$\displaystyle\|u\|_{\dot B_{2,1}^s}^h\mathrel{\mathop:}=\sum_{j\geq 0}2^{js}\|u_j\|_{L^2}$, $\displaystyle \ \|u\|_{\dot B_{2,1}^s}^l\mathrel{\mathop:}=\sum_{j\leq -1}2^{js}\|u_j\|_{L^2}$.
\end{center}

For comparison with Sobolev spaces $\dot H^s$ for $s\in\R$, we also have the following continuous injections:
$$\dot B_{2,1}^s\hookrightarrow\dot H^s\hookrightarrow \dot B_{2,\infty}^s$$

The appendix contains several results about Besov spaces: the reader can refer to Chapter 2 of \cite{BCD} for more information on this subject.

\subsection{Main results and organization of the article}~\\
In the present paper,  the pressureless Euler–Navier-Stokes system is considered in Besov spaces $\dot B^s_{2,1}.$  We work within a "critical" regularity framework that is motivated by previous studies of the equations of the system taken separately.
To be more precise, when decoupled, this system consists of the Euler equations for $w$ and the Navier–Stokes equations for $u$, for which the critical regularities are known, namely $\frac{d}{2}+1$ for Euler and $\frac{d}{2}-1$ for Navier–Stokes. The equation on $\rho$ suggests that there should be a difference of one derivative between $w$ and $\rho$, which naturally leads to choosing the index $\frac{d}{2}$ for $\rho$, with which we can use critical injection $\dot B_{2,1}^{\frac{d}{2}}(\mathbb{R}^d) \hookrightarrow L^\infty(\mathbb{R}^d)$.

As a first step, we prove in these spaces a global existence and uniqueness result for the following system:

\begin{eqnarray}\label{Euler-Navier-Stokes2}
    \left\{\begin{array}{l}
      \partial_t \rho+\dive(\rho w)=0, \\
     \partial_t w + (w\cdot \nabla)w+w-u=0, \\
     \partial_t u+(u\cdot \nabla) u+\nabla P=\Delta u+\rho(w-u), \\
     \dive u=0.
\end{array} \right.
\end{eqnarray}

\begin{thm}\label{théorème existence et unicité}
    There exists a non-negative constant $\alpha>0$ such that for all initial data $Z_0\mathrel{\mathop:}=(\rho_0,w_0,u_0)\in \dot B_{2,1}^{\frac{d}{2}}\times \left(\dot B_{2,1}^{\frac{d}{2}-1}\cap \dot B_{2,1}^{\frac{d}{2}+1}\right)\times \dot B_{2,1}^{\frac{d}{2}-1}$ satisfying : \begin{eqnarray}\label{condition initiale}\mathcal{Z}_0\mathrel{\mathop:}=\|\rho_0\|_{\dot B_{2,1}^{\frac{d}{2}}}+\|w_0\|_{\dot B_{2,1}^{\frac{d}{2}-1}\cap \dot B_{2,1}^{\frac{d}{2}+1}}+\|u_0\|_{\dot B_{2,1}^{\frac{d}{2}-1}}\leq \alpha,\end{eqnarray}
    the system \eqref{Euler-Navier-Stokes2} with the initial data $Z_0$ admits a unique global-in-time solution $(\rho,w,u,P)$ in the set
    \begin{multline}\label{espace fonctionnel E} E\mathrel{\mathop:}= \bigg\{ (\rho,w,u,P) \ \bigg| \ \rho\in \mathcal{C}_b(\R^+;\dot B_{2,1}^{\frac{d}{2}}), \ u\in \mathcal{C}_b(\R^+;\dot B_{2,1}^{\frac{d}{2}-1})\cap L^1(\R^+,\dot B_{2,1}^{\frac{d}{2}+1}), \\ w\in \mathcal{C}_b(\R^+; \dot B_{2,1}^{\frac{d}{2}-1}\cap \dot B_{2,1}^{\frac{d}{2}+1})\cap L^1(\R^+;\dot B_{2,1}^{\frac{d}{2}+1}), \nabla P\in L^1(\R^+,\dot B_{2,1}^{\frac{d}{2}-1}) \bigg\}.\end{multline}
Moreover, we have the following inequality for any $t\in\R^+$ : \begin{equation}\label{estimée théorème système 1}\mathcal{Z}(t)\leq C \mathcal{Z}_0 \end{equation} where $$\displaylines{\mathcal{Z}(t)\mathrel{\mathop:}= \|\rho\|_{L_t^\infty(\dot B_{2,1}^{\frac{d}{2}})}+\|w\|_{L_t^\infty(\dot B_{2,1}^{\frac{d}{2}-1}\cap \dot B_{2,1}^{\frac{d}{2}+1})}+\|u\|_{L_t^\infty(\dot B_{2,1}^{\frac{d}{2}-1})}+\|(w,u)\|_{L_t^1(\dot B_{2,1}^{\frac{d}{2}+1})} \hfill\cr\hfill +\|w-u\|_{L_t^\infty(\dot B_{2,1}^{\frac{d}{2}-1})\cap L_t^1(\dot B_{2,1}^{\frac{d}{2}-1})}+\|\nabla P\|_{L_t^1(\dot B_{2,1}^{\frac{d}{2}-1})}.}$$
\end{thm}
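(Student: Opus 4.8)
I would work directly with the non-conservative form \eqref{Euler-Navier-Stokes2} and follow the classical three-step scheme at critical regularity: (i) derive \emph{a priori} estimates that already exhibit the structure of the space $E$ in \eqref{espace fonctionnel E}; (ii) construct approximate solutions, propagate these bounds uniformly, and pass to the limit; (iii) prove a stability estimate yielding uniqueness. Everything rests on three structural features of \eqref{Euler-Navier-Stokes2}: the density $\rho$ is \emph{purely transported} by $w$, so its $\dot B^{d/2}_{2,1}$ norm is controlled up to a flow distortion measured by $\|w\|_{L^1_t(\dot B^{d/2+1}_{2,1})}$; both $w$ and the difference $w-u$ obey \emph{damped} transport equations, with damping $1$ and $1+\rho\ge 1$ respectively, which — exactly as for hyperbolic systems with friction — yields an $L^1$-in-time bound in the same space where the quantity lives in $L^\infty_t$, at the cost of only an $L^1_t$ source; finally $u$ solves a Navier–Stokes equation, hence enjoys the parabolic gain $u\in L^1_t(\dot B^{d/2+1}_{2,1})$, and the forcing $\rho(w-u)$ is a harmless zeroth-order product.

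\textbf{A priori estimates.} Apply the transport estimate in $\dot B^{d/2}_{2,1}$ to $\partial_t\rho+w\cdot\nabla\rho=-\rho\dive w$ (the product $\rho\dive w$ is tame since $\dot B^{d/2}_{2,1}$ is an algebra) to get $\|\rho\|_{L^\infty_t(\dot B^{d/2}_{2,1})}\le\|\rho_0\|_{\dot B^{d/2}_{2,1}}\exp\!\big(C\|w\|_{L^1_t(\dot B^{d/2+1}_{2,1})}\big)$. Apply the damped-transport estimate to $\partial_t w+w\cdot\nabla w+w=u$ in $\dot B^{d/2+1}_{2,1}$ to bound $\|w\|_{L^\infty_t(\dot B^{d/2+1}_{2,1})}+\|w\|_{L^1_t(\dot B^{d/2+1}_{2,1})}$ by $\|w_0\|_{\dot B^{d/2+1}_{2,1}}+\|u\|_{L^1_t(\dot B^{d/2+1}_{2,1})}$ plus a commutator term quadratic in $\mathcal Z(t)$. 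Apply the damped-transport estimate in $\dot B^{d/2-1}_{2,1}$ to
\[
\partial_t(w-u)+w\cdot\nabla(w-u)+(1+\rho)(w-u)=\nabla P-\Delta u-(w-u)\cdot\nabla u,
\]
whose right-hand side is controlled in $L^1_t(\dot B^{d/2-1}_{2,1})$ by $\|u\|_{L^1_t(\dot B^{d/2+1}_{2,1})}$, $\|\nabla P\|_{L^1_t(\dot B^{d/2-1}_{2,1})}$ and the quadratic term $\|w-u\|_{L^\infty_t(\dot B^{d/2-1}_{2,1})}\|u\|_{L^1_t(\dot B^{d/2+1}_{2,1})}$. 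Apply the Stokes maximal-regularity estimate to $\partial_t u-\Delta u+\nabla P=-u\cdot\nabla u+\rho(w-u)$, $\dive u=0$, bounding $\|u\|_{L^\infty_t(\dot B^{d/2-1}_{2,1})}+\|u\|_{L^1_t(\dot B^{d/2+1}_{2,1})}$ by $\|u_0\|_{\dot B^{d/2-1}_{2,1}}$ plus the quadratic terms $\|u\|_{L^\infty_t(\dot B^{d/2-1}_{2,1})}\|u\|_{L^1_t(\dot B^{d/2+1}_{2,1})}$ and $\|\rho\|_{L^\infty_t(\dot B^{d/2}_{2,1})}\|w-u\|_{L^1_t(\dot B^{d/2-1}_{2,1})}$. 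Recover $\nabla P$ from the elliptic identity $\Delta P=\dive\big(\rho(w-u)-u\cdot\nabla u\big)$, which gives $\|\nabla P\|_{L^1_t(\dot B^{d/2-1}_{2,1})}$ by the same quadratic bounds, and note that the $\dot B^{d/2-1}_{2,1}$ part of $\|w\|_{L^\infty_t}$ comes for free from $w=u+(w-u)$. Assembling these inequalities in that order (each left-hand piece bounded by data, by quantities already estimated, and by terms quadratic in $\mathcal Z$) yields $\mathcal Z(t)\le C_0\mathcal Z_0+C_0\mathcal Z(t)^2$ for all $t$; choosing $\alpha$ so small that $4C_0^2\alpha<1$ and running the usual continuity argument gives $\mathcal Z(t)\le 2C_0\mathcal Z_0$, i.e. \eqref{estimée théorème système 1}.

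\textbf{Existence.} Build approximate solutions by a Picard iteration: set $(\rho^0,w^0,u^0)=(0,0,0)$ and, given $(\rho^n,w^n,u^n)$, let $\rho^{n+1}$ solve the linear transport equation $\partial_t\rho^{n+1}+\dive(\rho^{n+1}w^n)=0$, let $w^{n+1}$ solve $\partial_t w^{n+1}+w^n\cdot\nabla w^{n+1}+w^{n+1}=u^n$, and let $(u^{n+1},\nabla P^{n+1})$ solve the Stokes system with convection $u^n\cdot\nabla u^{n+1}$ and forcing $\rho^n(w^n-u^n)$, all with the initial data $(\rho_0,w_0,u_0)$ satisfying \eqref{condition initiale}. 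Each step is well posed in $E$ by the linear transport theory and the Stokes maximal-regularity results recalled in the appendix. The a priori computation above is linear in the new iterate, so by induction $\mathcal Z^n(t)\le 2C_0\mathcal Z_0$ uniformly in $n$. One then shows the sequence is Cauchy in the weaker norm $\|\rho^{n+1}-\rho^n\|_{L^\infty_t(\dot B^{d/2-1}_{2,1})}+\|w^{n+1}-w^n\|_{L^\infty_t(\dot B^{d/2}_{2,1})}+\|u^{n+1}-u^n\|_{L^\infty_t(\dot B^{d/2-2}_{2,1})\cap L^1_t(\dot B^{d/2}_{2,1})}$ on a short interval (the estimates coincide with the stability estimates below, which are contractive for small time thanks to the smallness of the uniform bound), and then on all of $\R^+$ by iterating this argument. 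Its limit solves \eqref{Euler-Navier-Stokes2}, the uniform bounds persist by lower semicontinuity, and strong continuity in time is recovered a posteriori from the equations. (Alternatively, extract a weak-$*$ limit from the uniform bounds, upgrade it to strong convergence on compact space-time sets via time equicontinuity and local compactness of Besov embeddings, and pass to the limit.)

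\textbf{Uniqueness and main obstacle.} For two solutions with the same data, the difference $(\delta\rho,\delta w,\delta u,\nabla\delta P)$ solves the linearized system with source terms bilinear in the differences and the two solutions. Because the transport of $\rho$ and $w$ saturates the Besov transport theory (the endpoint $s=\tfrac d2$, allowed only because $r=1$), I would measure the differences one derivative below: $\delta\rho$ in $L^\infty_t(\dot B^{d/2-1}_{2,1})$, $\delta w$ in $L^\infty_t(\dot B^{d/2}_{2,1})$, $\delta u$ in $L^\infty_t(\dot B^{d/2-2}_{2,1})\cap L^1_t(\dot B^{d/2}_{2,1})$ and $\nabla\delta P$ in $L^1_t(\dot B^{d/2-2}_{2,1})$ — indices chosen so that every product and every transport commutator appearing is tame — and conclude by Grönwall that the differences vanish on a short interval, propagating uniqueness to $\R^+$ via \eqref{estimée théorème système 1}. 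The main difficulty throughout is to close the nonlinear estimates \emph{at the critical level} despite the complete absence of smoothing on $\rho$ and $w$: one must simultaneously control $w$ in the higher space $\dot B^{d/2+1}_{2,1}$ while $u$ lives only in $\dot B^{d/2-1}_{2,1}$, the bridge being the damped difference $w-u$ together with the parabolic gain on $u$; and for uniqueness the analogous obstruction is the index mismatch produced by the coupling term $\rho(w-u)$ — since $w-u$ is merely $\dot B^{d/2-1}_{2,1}$, the product $\delta\rho\,(w_1-u_1)$ drops one derivative, which forces $\delta u$ down to $\dot B^{d/2-2}_{2,1}$ and, in dimension $d=2,3$, requires invoking the endpoint product laws together with the $L^1_t$-smallness of $w-u$ with some care.
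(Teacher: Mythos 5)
Your a priori estimates are essentially those of the paper: transport for $\rho$, damped transport for $w$ and for the damped mode $w-u$, the parabolic gain on $u$, and recovery of $\nabla P$ by the elliptic identity. Organizing the closure by substitution (the linear source $\|u\|_{L^1_t(\dot B_{2,1}^{\frac d2+1})}$ appearing in the $w$ and $w-u$ estimates is itself bounded, up to quadratic terms, by $\|u_0\|_{\dot B_{2,1}^{\frac d2-1}}$) is a legitimate variant of the paper's weighted combination of the three inequalities, and your Picard scheme is an acceptable alternative to the paper's Friedrichs truncation plus compactness. Two minor points: the claim that $(1+\rho)$ provides damping $\geq 1$ presupposes $\rho\geq 0$, which is not assumed in the statement — but smallness of $\|\rho\|_{L^\infty}$ via $\dot B_{2,1}^{\frac d2}\hookrightarrow L^\infty$ makes this irrelevant; and the Stokes maximal regularity you invoke is not in the paper's appendix, though it is classical.

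The genuine gap is in the choice of spaces for the difference estimates, which you use both for uniqueness and for the contraction in your existence scheme. You put $\delta u$ in $L^\infty_t(\dot B_{2,1}^{\frac d2-2})\cap L^1_t(\dot B_{2,1}^{\frac d2})$ because you estimate the coupling term $\delta\rho\,(w_1-u_1)$ using only $w_1-u_1\in \dot B_{2,1}^{\frac d2-1}$. For $d=2$ this places $\delta u$ at regularity $-1=-\frac d2$, and the scheme does not close: the product $\delta\rho\,(w_1-u_1)$ sits at the endpoint $s+s'=0$ of Lemma \ref{Produit espace de Besov}, which only lands in $\dot B_{2,\infty}^{-\frac d2}$ (the $\ell^1$ summability needed to run the dyadic estimate and Gr\"onwall is lost), the convection term $u_1\cdot\nabla\delta u$ has the same endpoint defect, and the commutator estimate of Lemma \ref{commutateur} requires $s\in\left]-\frac d2,\frac d2+1\right[$, so $s=-\frac d2$ is excluded. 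Saying this "requires invoking the endpoint product laws with some care" is not a proof; it is precisely the obstruction. The paper avoids it altogether by keeping $\delta u$ (and $\delta w-\delta u$) at the level $\dot B_{2,1}^{\frac d2-1}$ and estimating $\|\delta\rho\,(w_2-u_2)\|_{\dot B_{2,1}^{\frac d2-1}}\lesssim \|\delta\rho\|_{\dot B_{2,1}^{\frac d2-1}}\|w_2-u_2\|_{\dot B_{2,1}^{\frac d2}}$, where $\|w_2-u_2\|_{\dot B_{2,1}^{\frac d2}}$ is bounded by the frequency splitting $\|w_2-u_2\|^l_{\dot B_{2,1}^{\frac d2-1}}+\|w_2\|^h_{\dot B_{2,1}^{\frac d2+1}}+\|u_2\|_{\dot B_{2,1}^{\frac d2}}$, a locally time-integrable Gr\"onwall coefficient thanks to \eqref{estimée théorème système 1}; no derivative is lost on $\delta u$ and all $d\geq 2$ are covered. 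As written, your argument establishes uniqueness (and the fixed-point contraction) only for $d\geq 3$; to cover $d=2$ you should either adopt the paper's difference norms or otherwise repair the endpoint estimates.
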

\begin{remark}
 Preserving the strict positivity of the density is important to show the equivalence between the systems \eqref{Euler-Navier-Stokes1} and \eqref{Euler-Navier-Stokes2}. 

    It comes from the characteristics method associated with the fact that $$\rho\in \mathcal{C}_b(\R^+; \dot B_{2,1}^{\frac{d}{2}})\hookrightarrow \mathcal{C}_b(\R^+; \mathcal{C}_0) \quad \text{and} \quad w\in L^1(\R^+; \dot B_{2,1}^{\frac{d}{2}+1})\hookrightarrow L^1(\R^+; C^1).$$ Indeed, we have: 
    $$\rho(t,X(t,x))\exp\left(\int_0^t \dive w(s,X(s,x))ds\right)=\rho_0(x),$$ where the flow $X$ is defined by $$\left\{\begin{array}{l} \displaystyle \frac{d}{dt}X(t;x)=w(t,X(t,x)), \\ X(t=0,x)=x\in\R^d.  \end{array}\right.$$
\end{remark}

If we assume an additional condition on the low frequencies of the initial velocities, we obtain, in the critical framework, an optimal time decay estimate:
\begin{theorem}\label{estimées de décroissance}
    Under the same assumptions as in Theorem \ref{théorème existence et unicité}, if the initial data $(\rho_0,w_0,u_0)$ also satisfies the hypothesis \begin{equation}\label{condition L1}w_0,u_0 \in\dot B_{2,\infty}^{-\frac{d}{2}}(\R^d),\end{equation} then $w$ and $u$ satisfy: \begin{equation}\label{taux de décroissance L2}\|(w,u)(t)\|_{L^2}\leq C (1+c_0 t)^{-\frac{d}{4}}.\end{equation} 
    For $d\geq 3$, we have in addition:
 \begin{equation}\label{taux de décroissance L2 mode amorti}\|(w-u)(t)\|_{L^2}\leq  C (1+c_0 t)^{-\frac{d(d+1)}{4(d-1)}}.\end{equation}
    In the case of $d=2$, we have in addition: 
    \begin{equation}\label{estimée décroissance u-v 2d}
        \|(w-u)(t)\|_{L^2(\R^2)}\leq C(1+c_0 t)^{-1}.
    \end{equation}
\end{theorem}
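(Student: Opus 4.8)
The proof rests on a Littlewood--Paley analysis, on the global bounds \eqref{estimée théorème système 1}, and on a systematic low/high frequency splitting (cut at $|\xi|\simeq 1$). Linearising \eqref{Euler-Navier-Stokes2} around $(0,0,0)$, the density $\rho$ is merely transported --- it stays bounded in $\dot B_{2,1}^{d/2}$ with no decay --- while the pair $(w,u)$ splits, at each frequency, into a damped mode, essentially $w-u$, associated with an eigenvalue $\simeq -1$, and a parabolic mode, along which $w\simeq u$, associated with an eigenvalue $\simeq -|\xi|^{2}$. Hypothesis \eqref{condition L1} feeds a $\dot B_{2,\infty}^{-d/2}$ bound into the parabolic mode, which is precisely what produces the heat-type rate $(1+c_{0}t)^{-d/4}$; the extra decay of $w-u$ reflects that its own evolution is not only parabolic but also relaxed, so that its decay is degraded only by the quadratic interactions and by the coupling term $\Delta u$.

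\textbf{Step 1: propagation of a negative regularity.} I would first show that, under \eqref{condition initiale} and \eqref{condition L1},
\[
\sup_{t\ge 0}\ \|(w,u)(t)\|_{\dot B_{2,\infty}^{-d/2}}^{l}\ \le\ C\big(\mathcal Z_{0}+\|(w_{0},u_{0})\|_{\dot B_{2,\infty}^{-d/2}}\big).
\]
Writing $(w,u)$ through Duhamel's formula, the linear part is harmless since the heat flow and the relaxation act boundedly on every homogeneous Besov space, while the nonlinear terms $(w\cdot\nabla)w$, $(u\cdot\nabla)u$, $\nabla P$, $\rho(w-u)$, once integrated in time, are finite thanks to \eqref{estimée théorème système 1} and the product and paraproduct estimates recalled in the appendix. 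The delicate point is that $\rho$ carries no time decay, so the $\rho$-dependent terms must be closed using only the $L^{\infty}_{t}(\dot B_{2,1}^{d/2})$ bound and the smallness $\mathcal Z_{0}\le\alpha$; this is done through a continuation argument, if necessary after first propagating $\dot B_{2,\infty}^{-\sigma}$ for $\sigma$ slightly below $d/2$ and bootstrapping.

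\textbf{Step 2: $L^{2}$ decay of the velocities.} For high frequencies, a Lyapunov-type estimate on the dyadic blocks --- where, for $j\ge 0$, both relaxation and diffusion act with rate $\gtrsim 1$ --- gives $\frac{d}{dt}\mathcal L_{j}^{2}+c\,\mathcal L_{j}^{2}\lesssim \mathcal L_{j}\,\|(\mathrm{NL})_{j}\|_{L^{2}}$ with $\mathcal L_{j}\simeq\|(w,u)_{j}\|_{L^{2}}$, whence $\|(w,u)(t)\|_{\dot B_{2,1}^{d/2-1}}^{h}\lesssim e^{-ct}(\cdots)$ after bootstrapping on the (also exponentially decaying) high-frequency nonlinearities. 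For low frequencies, combining the pointwise-in-frequency decay
\[
\|(w,u)_{j}(t)\|_{L^{2}}\lesssim e^{-c2^{2j}t}\|(w,u)_{j}(0)\|_{L^{2}}+\int_{0}^{t}e^{-c2^{2j}(t-s)}\|(\mathrm{NL})_{j}(s)\|_{L^{2}}\,ds
\]
with the bound of Step 1 and the elementary estimate $\sum_{j\le 0}2^{jd/2}e^{-c2^{2j}t}\lesssim(1+t)^{-d/4}$ yields $\|(w,u)(t)\|_{\dot B_{2,1}^{0}}^{l}\lesssim(1+c_{0}t)^{-d/4}$. Since $\|f\|_{L^{2}}\le\|f\|_{\dot B_{2,1}^{0}}^{l}+\|f\|_{\dot B_{2,1}^{d/2-1}}^{h}$ for $d\ge 2$, this gives \eqref{taux de décroissance L2}.

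\textbf{Step 3: refined decay of $w-u$, and the main obstacle.} Subtracting the second and third equations of \eqref{Euler-Navier-Stokes2},
\[
\partial_{t}(w-u)+(w-u)=-\Delta u+\nabla P-\big[(w\cdot\nabla)w-(u\cdot\nabla)u\big]-\rho(w-u),
\]
so that, up to the exponentially small high-frequency part, $\|(w-u)(t)\|_{L^{2}}\lesssim e^{-t}\|w_{0}-u_{0}\|_{L^{2}}+\int_{0}^{t}e^{-(t-s)}\|\mathrm{Source}(s)\|_{L^{2}}\,ds$. Into the source I would feed: the coupling term $\Delta u$, which gains a full power of parabolic decay ($\lesssim(1+s)^{-d/4-1}$ at low frequencies, exponential at high frequencies); the term $\rho(w-u)$, bounded by $\|\rho(s)\|_{\dot B_{2,1}^{d/2}}\,\|(w-u)(s)\|_{L^{2}}$ and thus absorbed by the smallness $\alpha$ inside a continuity argument on the expected decay; and the quadratic terms $\nabla P$, $(w\cdot\nabla)w$, $(u\cdot\nabla)u$, estimated through inequalities of the type $\|v\|_{L^{\infty}}\|\nabla v\|_{L^{2}}$ together with the rates of Step 2 and their derivative analogues. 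The heart of the matter is this last point: since the relaxation semigroup $e^{-(t-s)}$ does not regularise, the low-frequency part of the quadratic interactions must be controlled directly --- either in $L^{2}$, or in a negative Besov norm and then reinterpolated with the high-regularity bounds of Theorem \ref{théorème existence et unicité} --- and an interpolation argument balancing the low-frequency decay against those bounds is exactly what produces the exponent $\frac{d(d+1)}{4(d-1)}=\frac{d}{4}+\frac{1}{2}+\frac{1}{2(d-1)}$ in \eqref{taux de décroissance L2 mode amorti}. In dimension $d=2$, the relevant frequency sums (such as $\sum_{j\le 0}2^{j(d-2)}$) and the fact that $\dot B_{2,1}^{d/2}$ only lies in $L^{2}_{t}$, not in $L^{1}_{t}$, are borderline; this forces the separate, slightly weaker rate \eqref{estimée décroissance u-v 2d}, obtained by the same scheme at the cost of logarithmic corrections that are absorbed by the exponential damping.
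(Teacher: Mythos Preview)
Your overall architecture (propagation of the $\dot B_{2,\infty}^{-d/2}$ norm, low/high splitting, damped mode $w-u$) matches the paper, but two of your three steps depart from what is actually done, and the third carries a real gap.

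\textbf{Step 2.} The paper does \emph{not} run Duhamel frequency by frequency to get \eqref{taux de décroissance L2}. It uses the Nash--Lyapunov mechanism announced in the introduction: the a~priori estimates already yield $\frac{d}{dt}\mathcal L+\mathcal H\le 0$ with $\mathcal L$ built on $\|u\|_{\dot B_{2,1}^{d/2-1}}$, $\|w\|_{\dot B_{2,1}^{d/2+1}}$, $\|w-u\|_{\dot B_{2,1}^{d/2-1}}$ and $\mathcal H$ on $\|(w,u)\|_{\dot B_{2,1}^{d/2+1}}$, $\|w-u\|_{\dot B_{2,1}^{d/2-1}}$. Interpolation against the propagated $\dot B_{2,\infty}^{-d/2}$ bound gives $\mathcal H\ge c_0\mathcal L^{\frac{1}{1-\theta_0}}$ with $\theta_0=\frac{2}{d+1}$, hence $\mathcal L(t)\lesssim (1+c_0t)^{-(d-1)/2}$, and a second interpolation between $\dot B_{2,1}^{d/2-1}$ and $\dot B_{2,\infty}^{-d/2}$ gives $\|(w,u)\|_{L^2}\lesssim (1+c_0t)^{-d/4}$. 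Your Duhamel route could in principle be pushed through, but the nonlinear bootstrapping you allude to (``after bootstrapping on the also exponentially decaying high-frequency nonlinearities'') is nontrivial; the Lyapunov method sidesteps it completely because the nonlinear terms are already absorbed in the differential inequality.

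\textbf{Step 3 --- the actual gap.} You propose to estimate $\|(w-u)(t)\|_{L^2}$ directly by Duhamel with source $\Delta u$ and claim $\|\Delta u(s)\|_{L^2}^{l}\lesssim (1+s)^{-d/4-1}$. In the critical framework of Theorem~\ref{théorème existence et unicité} there is no pointwise control on $\|u\|_{\dot H^2}$: one only has $u\in L^1_t(\dot B_{2,1}^{d/2+1})$ and $u\in L^\infty_t(\dot B_{2,1}^{d/2-1})$, neither of which yields your claimed rate for $\|\Delta u\|_{L^2}$. This is precisely the ``major difficulty'' flagged in the paper's sketch. The paper resolves it by working not in $L^2$ but in $\dot B_{2,1}^{d/2-1}$, where $\|\Delta u\|_{\dot B_{2,1}^{d/2-1}}=\|u\|_{\dot B_{2,1}^{d/2+1}}$ \emph{is} available. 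Concretely, the paper uses the explicit matrix semigroup for the pair $(w-u,u)$ (so that the dangerous $\Delta u$ contribution at low frequencies appears only through $t\,e^{-t|\xi|^2}|\xi|^2\lesssim e^{-ct|\xi|^2}$), and sets up a closed loop between
\[
\sup_{t}(1+c_0t)^{\frac{d+1}{2}}\|(w,u)(t)\|_{\dot B_{2,1}^{d/2-1}}^{h}
\quad\text{and}\quad
\sup_{t}(1+c_0t)^{\frac{d+1}{2}}\|(w-u)(t)\|_{\dot B_{2,1}^{d/2-1}}^{l},
\]
each controlled by the other up to $\mathcal Z_0$-small error; smallness closes the loop. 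This gives $\|(w-u)(t)\|_{\dot B_{2,1}^{d/2-1}}\lesssim (1+c_0t)^{-(d+1)/2}$, and \emph{only then} interpolating with the bounded $\dot B_{2,\infty}^{-d/2}$ norm produces the $L^2$ exponent $\frac{d(d+1)}{4(d-1)}$. Your remark that ``an interpolation argument \ldots\ is exactly what produces the exponent'' is correct, but the interpolation must take place between $\dot B_{2,1}^{d/2-1}$ and $\dot B_{2,\infty}^{-d/2}$, after establishing the faster Besov rate --- not inside a direct $L^2$ Duhamel. For $d=2$ the obstruction is not logarithmic sums but a failing product law at regularity $\dot B_{2,1}^{d/2-2}$ (used to handle $(w\cdot\nabla)(u-w)$ at low frequency) together with the inequality $(1+c_0t)\le (1+c_0t)^{(d-1)/2}$, valid only for $d\ge 3$; the paper repairs both by targeting the weight $(1+c_0t)^{1}$ instead of $(1+c_0t)^{3/2}$ and replacing the failing $\dot B_{2,1}^{-1}$ product estimate by one in $\dot B_{2,\infty}^{-1}$.
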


\begin{remark}~
    \begin{enumerate}
        \item Since we have the following continuous injection : $$L^1(\R^d) \hookrightarrow \dot B_{2,\infty}^{-\frac{d}{2}}(\R^d),$$ the initial data $w_0,u_0$ can be taken in $L^1$.
        \item The "damped mode" $w-u$ has a better decay than $u$ and $w$ taken separately: the frequency analysis below shows that this difference behaves better.
    \end{enumerate}
\end{remark}

Assuming more regularity on the initial data, we can improve the previous decay estimates:
\begin{theorem}\label{estimées de décroissance2}
Assume $d\geq 3$ and let the initial data $(\rho_0,w_0,u_0)$ satisfy the assumptions of Theorem \ref{estimées de décroissance}. If, in addition, for $k\geq 1$, \begin{equation}\label{condition régularité supplémentaire}\rho_0\in \dot B_{2,1}^{\frac{d}{2}+k}, \quad w_0\in\dot B_{2,1}^{\frac{d}{2}+k+1}  , \quad u_0\in\dot B_{2,1}^{\frac{d}{2}+k-1},\end{equation} then there exists a unique solution $(\rho,w,u)$, which belongs to $E$ defined in~\eqref{espace fonctionnel E}, where $w$ and $u$ also belong respectively to 
$$\mathcal{C}_b(\R^+;\dot B_{2,1}^{\frac{d}{2}+k+1})\cap L^1(\R^+; \dot B_{2,1}^{\frac{d}{2}+k+1}) \;  \text{and} \; \mathcal{C}_b(\R^+;\dot B_{2,1}^{\frac{d}{2}+k-1})\cap L^1(\R^+; \dot B_{2,1}^{\frac{d}{2}+k+1}).$$

Moreover, $w$ and $u$ satisfy the following inequality (for all $p\in [2,\infty]$): \begin{equation}\label{taux de décroissance L2 k}\|\nabla^{k-1}(w,u)\|_{L^p}\leq C (1+c_0 t)^{-\frac{d}{2}(1-\frac{1}{p})-\frac{k-1}{2}}.\end{equation} 
\end{theorem}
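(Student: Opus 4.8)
The plan is to combine a higher-order regularity propagation result with a bootstrap/interpolation argument against the decay already obtained in Theorem 1.4. First I would establish global existence and uniqueness of the solution in the enlarged space, where $w\in\mathcal C_b(\R^+;\dot B_{2,1}^{\frac d2+k+1})\cap L^1(\R^+;\dot B_{2,1}^{\frac d2+k+1})$ and $u\in\mathcal C_b(\R^+;\dot B_{2,1}^{\frac d2+k-1})\cap L^1(\R^+;\dot B_{2,1}^{\frac d2+k+1})$. This should follow by revisiting the a priori estimates of Theorem 1.2: the linear part gives parabolic smoothing for $u$ and damping for $w-u$, while the transport structure of the $\rho$- and $w$-equations propagates the extra $k$ derivatives provided one controls the product and commutator terms in $\dot B_{2,1}^{\frac d2+k}$-type norms using the standard paraproduct estimates (Appendix / Chapter 2 of \cite{BCD}) together with the smallness of $\mathcal Z_0$ and the fact that $\dot B_{2,1}^{\frac d2}\hookrightarrow L^\infty$ makes $\dot B_{2,1}^{\frac d2+k}$ a multiplier space on itself. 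The key point is that the extra regularity is \emph{transported}, not gained, so no new smallness condition beyond $\alpha$ is needed — only the $k$-dependent norm of the data must be finite.

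Next I would set up the decay mechanism. Following the strategy behind Theorem 1.4, introduce a time-weighted functional, e.g. $\mathcal D_k(t):=\sup_{0\le s\le t}(1+c_0 s)^{\frac d4+\frac{k-1}{2}}\|\nabla^{k-1}(w,u)(s)\|_{L^2}$ (plus the companion quantities for the damped mode and for the $L^1$-in-time norms at the shifted regularity), and show it stays bounded. The low-frequency part is handled by the $\dot B_{2,\infty}^{-d/2}$ hypothesis \eqref{condition L1}, which is preserved by the flow and, combined with energy estimates, yields the $(1+c_0t)^{-d/4}$ baseline rate for $\|(w,u)\|_{L^2}$ exactly as in \eqref{taux de décroissance L2}; differentiating $k-1$ times multiplies this by $(1+c_0t)^{-(k-1)/2}$ via the usual "$\dot H^{s}$ norm of low frequencies decays like $t^{-s/2-d/4}$" argument. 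The high-frequency part decays faster than any polynomial because of the parabolic term $\Delta u$ and the damping $-(w-u)$; here one uses the just-established uniform-in-time bound in $\dot B_{2,1}^{\frac d2+k+1}$ to absorb the high frequencies. The $L^p$ estimate for general $p\in[2,\infty]$ then comes from interpolating the $L^2$ decay with the boundedness of one more derivative in $\dot B_{2,1}^{\frac d2+k+1}$ and the Gagliardo–Nirenberg/Bernstein inequality $\|\nabla^{k-1}f\|_{L^p}\lesssim\|\nabla^{k-1}f\|_{L^2}^{\theta}\|f\|_{\dot B_{2,1}^{\frac d2+k+1}}^{1-\theta}$ with $\theta=\theta(p,d)$ chosen so the exponents match $\frac d2(1-\frac1p)+\frac{k-1}{2}$.

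The main obstacle I anticipate is the nonlinear feedback in the decay argument: the source term $\rho(w-u)$ in the $u$-equation and the convective terms $(w\cdot\nabla)w$, $(u\cdot\nabla)u$ couple the decay of different quantities, so closing the weighted estimate requires that the products decay at least as fast as the linear rate. This is where the \emph{better} decay of the damped mode $w-u$ (the rates \eqref{taux de décroissance L2 mode amorti}) and the extra integrable-in-time high regularity both get used: one estimates $\|\rho(w-u)\|$ in the relevant norm by $\|\rho\|_{\dot B_{2,1}^{\frac d2}}\|w-u\|$ and checks that the resulting time weight is summable, and similarly for the quadratic terms one splits into low/high frequencies and uses that at least one factor is already known to decay. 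Carrying out this bookkeeping for all $p$ simultaneously, and verifying that the condition $d\ge 3$ is exactly what makes the borderline exponents integrable, is the technical heart of the proof; the rest is a routine (if lengthy) adaptation of the machinery used for Theorems 1.2 and 1.4.
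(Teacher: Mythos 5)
Your first step (propagation of the extra $k$ derivatives with no new smallness condition) and your use of the propagated $\dot B_{2,\infty}^{-d/2}$ bound match the paper, which closes a Lyapunov inequality $\frac{d}{dt}\mathcal L_k+\mathcal H_k\le 0$ for the augmented functionals $\mathcal L_k,\mathcal H_k$ and then gets the decay of $\mathcal L_k$ at rate $(1+c_0t)^{-\frac{d+k-1}{2}}$ by a single interpolation against $\dot B_{2,\infty}^{-\frac d2}$ (Nash's argument), rather than by the time-weighted bootstrap you propose. That difference of route would be acceptable, but as written your final step contains a genuine gap: to obtain \eqref{taux de décroissance L2 k} for $p>2$ you interpolate the decaying quantity $\|\nabla^{k-1}(w,u)\|_{L^2}\lesssim (1+c_0t)^{-\frac d4-\frac{k-1}{2}}$ against the merely \emph{bounded} norm $\|(w,u)\|_{\dot B_{2,1}^{\frac d2+k+1}}$. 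A bounded factor contributes no decay, so the resulting exponent is only $\theta\bigl(\frac d4+\frac{k-1}{2}\bigr)$ with $\theta<1$; for instance for $p=\infty$, $d=3$, $k=1$ this gives at best an exponent strictly below $3/4$, far from the claimed $3/2=\frac d2+\frac{k-1}{2}$. No choice of $\theta$ in a Gagliardo--Nirenberg/Bernstein inequality can recover the stated rate from these two inputs. What is needed is the decay of the full critical norm $\|(w,u)\|_{\dot B_{2,1}^{\frac d2+k-1}}$ (equivalently of $\mathcal L_k$) at rate $(1+c_0t)^{-\frac{d+k-1}{2}}$, which then gives the $L^\infty$ endpoint via $\|\nabla^{k-1}v\|_{L^\infty}\lesssim\|v\|_{\dot B_{2,1}^{\frac d2+k-1}}$, the $L^2$ endpoint via interpolation with $\dot B_{2,\infty}^{-\frac d2}$, and the intermediate $p$ by $L^2$--$L^\infty$ interpolation; this is precisely what the paper's functional $\mathcal L_k$ is designed to deliver.

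A secondary weak point: your assertion that the high-frequency part "decays faster than any polynomial because of the parabolic term and the damping" is not justified for the nonlinear system. The high frequencies are continually forced by the convective terms and by $\rho(w-u)$, whose decay is only algebraic, so Duhamel only transfers the algebraic rate of the sources to the high frequencies (this is exactly the delicate bookkeeping the paper carries out in the corresponding high-frequency lemmas for the damped mode, and it yields a finite rate, not super-polynomial decay). If you replace the weighted bootstrap by the paper's Lyapunov route --- prove \eqref{inégalité fonctionnelle k} with $\mathcal L_k,\mathcal H_k$, interpolate $\|v\|_{\dot B_{2,1}^{\frac d2-1+k}}^l\lesssim\bigl(\|v\|_{\dot B_{2,1}^{\frac d2+1+k}}^l\bigr)^{1-\theta_k}\|v\|_{\dot B_{2,\infty}^{-\frac d2}}^{\theta_k}$ with $\theta_k=\frac{2}{d+k+1}$, and integrate the resulting ODE --- both issues disappear and the stated rates follow.
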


\subsection{Sketch of the proofs}~\\
To begin with, consider the Leray projector defined as follows :
$$\mathbb{P}\mathrel{\mathop:}=Id+\nabla(-\Delta)^{-1}\dive,$$
which is the orthogonal projector on fields with zero divergence.

Then, the study of \eqref{Euler-Navier-Stokes2} is equivalent to that of the following system:
\begin{eqnarray}\label{Euler-Navier-Stokes3}
    \left\{\begin{array}{l}
      \partial_t \rho+\dive(\rho w)=0, \\
     \partial_t w + (w\cdot \nabla)w+w-u=0, \\
     \partial_t u+\mathbb{P}(u\cdot \nabla) u=\Delta u+\mathbb{P}\left(\rho(w-u)\right).
\end{array} \right.
\end{eqnarray}
 $\nabla P$ may be computed from the relation \begin{eqnarray}\label{expression de la pression P} \nabla P=\nabla(-\Delta)^{-1}\dive\left((u\cdot \nabla)u\right)-\nabla(-\Delta)^{-1}\dive(\rho(w-u)),\end{eqnarray}
and $u$ satisfies $\dive u=0$.

Consequently, in order to prove Theorem \ref{théorème existence et unicité}, it suffices to prove the following theorem pertaining to the well-posedness of the system \eqref{Euler-Navier-Stokes3} : 
\begin{theorem}\label{théorème existence et unicité2}
    There exists a non-negative constant $\alpha>0$ such that for all initial data $Z_0\mathrel{\mathop:}=(\rho_0,w_0,u_0)\in \dot B_{2,1}^{\frac{d}{2}}\times \left(\dot B_{2,1}^{\frac{d}{2}-1}\cap \dot B_{2,1}^{\frac{d}{2}+1}\right)\times \dot B_{2,1}^{\frac{d}{2}-1}$ satisfying the condition \eqref{condition initiale},
    the system \eqref{Euler-Navier-Stokes3} with the initial data $Z_0$ admits a unique global-in-time solution $(\rho,w,u)$ in the set $\tilde{E}$ defined by 
    \begin{multline}\label{espace fonctionnel tilde E} \tilde{E}\mathrel{\mathop:}= \bigg\{ (\rho,w,u) \ \bigg| \ \rho\in \mathcal{C}_b(\R^+;\dot B_{2,1}^{\frac{d}{2}}), \ u\in \mathcal{C}_b(\R^+;\dot B_{2,1}^{\frac{d}{2}-1})\cap L^1(\R^+,\dot B_{2,1}^{\frac{d}{2}+1}), \\ w\in \mathcal{C}_b(\R^+; \dot B_{2,1}^{\frac{d}{2}-1}\cap \dot B_{2,1}^{\frac{d}{2}+1})\cap L^1(\R^+;\dot B_{2,1}^{\frac{d}{2}+1})\bigg\}.\end{multline}
In addition, we have the following inequality for any $t\in\R^+$ : \begin{equation}\label{estimée théorème}\mathcal{Z}(t)\leq C \mathcal{Z}_0 \end{equation} where $$\displaylines{\mathcal{Z}(t)\mathrel{\mathop:}= \|\rho\|_{L_t^\infty(\dot B_{2,1}^{\frac{d}{2}})}+\|w\|_{L_t^\infty(\dot B_{2,1}^{\frac{d}{2}-1}\cap \dot B_{2,1}^{\frac{d}{2}+1})}+\|u\|_{L_t^\infty(\dot B_{2,1}^{\frac{d}{2}-1})}+\|(w,u)\|_{L_t^1(\dot B_{2,1}^{\frac{d}{2}+1})} \hfill\cr\hfill +\|w-u\|_{L_t^\infty(\dot B_{2,1}^{\frac{d}{2}-1})\cap L_t^1(\dot B_{2,1}^{\frac{d}{2}-1})}.}$$
    
\end{theorem}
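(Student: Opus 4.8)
The plan is to follow the classical strategy for quasilinear hyperbolic--parabolic systems in critical Besov spaces: build approximate solutions, derive a priori estimates that close because the data are small, pass to the limit, and prove uniqueness one derivative below the existence level. Concretely, I would construct a sequence $(\rho^n,w^n,u^n)$ by a Friedrichs-type scheme (truncating the data and the nonlinearities in frequency), or equivalently by the iteration solving successively the transport equation $\partial_t\rho^{n+1}+\dive(\rho^{n+1}w^n)=0$, the damped transport equation $\partial_t w^{n+1}+(w^n\cdot\nabla)w^{n+1}+w^{n+1}=u^n$, and the forced heat equation $\partial_t u^{n+1}-\Delta u^{n+1}=-\mathbb{P}(u^n\cdot\nabla)u^n+\mathbb{P}(\rho^n(w^n-u^n))$. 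Each step is linear and globally well posed, so everything hinges on estimates uniform in $n$.

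For the a priori estimates --- the heart of the proof --- one treats each unknown with its natural tool. For $\rho$, rewriting the density equation as $\partial_t\rho+w\cdot\nabla\rho=-\rho\,\dive w$ and using the transport estimate in $\dot B_{2,1}^{d/2}$ recalled in the appendix gives $\|\rho\|_{L_t^\infty(\dot B_{2,1}^{d/2})}\lesssim\|\rho_0\|_{\dot B_{2,1}^{d/2}}\exp\bigl(C\|w\|_{L_t^1(\dot B_{2,1}^{d/2+1})}\bigr)$, so one needs $\|w\|_{L_t^1(\dot B_{2,1}^{d/2+1})}$ to be finite and small. For $w$, applying $\dot\Delta_j$, performing an $L^2$ energy estimate with the commutator $[\dot\Delta_j,w\cdot\nabla]$, then multiplying by $2^{j(d/2\pm1)}$ and summing, the damping term $+w$ produces $\|w\|_{L_t^\infty(\dot B_{2,1}^{d/2-1}\cap\dot B_{2,1}^{d/2+1})}+\|w\|_{L_t^1(\dot B_{2,1}^{d/2+1})}\lesssim\|w_0\|_{\dot B_{2,1}^{d/2-1}\cap\dot B_{2,1}^{d/2+1}}+\int_0^t\|\nabla w\|_{L^\infty}\|w\|_{\dot B_{2,1}^{d/2+1}}+\|u\|_{L_t^1(\dot B_{2,1}^{d/2+1})}$. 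For $u$, maximal regularity for the heat equation in $\dot B_{2,1}^{d/2-1}$ gives $\|u\|_{L_t^\infty(\dot B_{2,1}^{d/2-1})}+\|u\|_{L_t^1(\dot B_{2,1}^{d/2+1})}\lesssim\|u_0\|_{\dot B_{2,1}^{d/2-1}}+\|(u\cdot\nabla)u\|_{L_t^1(\dot B_{2,1}^{d/2-1})}+\|\rho(w-u)\|_{L_t^1(\dot B_{2,1}^{d/2-1})}$, the last term being controlled because $\dot B_{2,1}^{d/2}$ is an algebra acting on $\dot B_{2,1}^{d/2-1}$. Finally, for the damped mode $z:=w-u$, subtracting the equations yields $\partial_t z+z=-\Delta u-(w\cdot\nabla)w+\mathbb{P}(u\cdot\nabla)u-\mathbb{P}(\rho z)$, where $\Delta u\in L_t^1(\dot B_{2,1}^{d/2-1})$ is available from the parabolic bound on $u$, and the damping $+z$ gives $z\in L_t^\infty(\dot B_{2,1}^{d/2-1})\cap L_t^1(\dot B_{2,1}^{d/2-1})$. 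Since all the nonlinear contributions are at least quadratic (standard paraproduct, product and commutator estimates, together with the time interpolation $\|w\|_{L_t^2(\dot B_{2,1}^{d/2})}^2\lesssim\|w\|_{L_t^\infty(\dot B_{2,1}^{d/2-1})}\|w\|_{L_t^1(\dot B_{2,1}^{d/2+1})}$ used to place $(w\cdot\nabla)w$ in $L_t^1(\dot B_{2,1}^{d/2-1})$), summing the four estimates gives $\mathcal{Z}(t)\le C\mathcal{Z}_0+C\mathcal{Z}(t)^2$; choosing $\alpha$ small, a continuity argument yields $\mathcal{Z}(t)\le 2C\mathcal{Z}_0$ for all $t$, i.e. \eqref{estimée théorème}.

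The uniform bounds, combined with the equations, bound $\partial_t(\rho^n,w^n,u^n)$ in suitable negative-regularity spaces; by a compactness argument adapted to the whole space (localization, or showing $(\rho^n,w^n,u^n)$ is Cauchy in $\tilde E$ with one derivative less), one extracts a limit solving \eqref{Euler-Navier-Stokes3} and lying in $\tilde E$, the time continuity of $w$ and $u$ following from the equations. For uniqueness, one takes two solutions issued from the same data, estimates the differences $\delta\rho,\delta w,\delta u$ at regularity one below (typically $\dot B_{2,1}^{d/2-1}$ for $\delta\rho$ and $\delta u$, and $\dot B_{2,1}^{d/2-2}\cap\dot B_{2,1}^{d/2}$ for $\delta w$), closes a Gronwall inequality on a short time interval, and propagates it to $\R^+$.

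The delicate point --- and the main obstacle --- is that these estimates must close simultaneously in a tightly coupled loop: $\rho$ enjoys no smoothing and stays at the critical index $d/2$, so its bound is only as good as $\exp\|w\|_{L_t^1(\dot B_{2,1}^{d/2+1})}$; that $L_t^1$ norm is generated solely through the friction term $+w$ in the $w$-equation, while $w$ is forced by $u$; and the parabolic $L_t^1(\dot B_{2,1}^{d/2+1})$ bound on $u$ is in turn fed the product $\rho(w-u)$. One must therefore order the estimates carefully and verify that every nonlinear term is genuinely quadratic in $\mathcal{Z}$, so that the whole system closes for small data; this bookkeeping, rather than any single inequality, is where the real work lies.
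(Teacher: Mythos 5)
Your overall architecture (Friedrichs-type approximation, a priori estimates built on the transport equation for $\rho$, energy estimates per dyadic block for $w$ and $u$, the damped mode $w-u$, compactness, and uniqueness one derivative below) is the same as the paper's, but the central a priori estimate does not close the way you claim. First, your displayed estimate for $w$ at the level $\dot B_{2,1}^{\frac d2-1}$ is not correct: the forcing $u$ in $\partial_t w+w=u-(w\cdot\nabla)w$ produces the term $\int_0^t\|u\|_{\dot B_{2,1}^{\frac d2-1}}\,d\tau$ on the right, and at low frequencies this is \emph{not} controlled by $\|u\|_{L_t^1(\dot B_{2,1}^{\frac d2+1})}$; the only available information there is $\|u\|_{L_t^\infty(\dot B_{2,1}^{\frac d2-1})}$, so the bound grows linearly in $t$. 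This is exactly the "missing low-frequency information" the paper points out: $w$ is estimated directly only in $\dot B_{2,1}^{\frac d2+1}$ (Lemma \ref{estimée sur u lemme}), and the uniform $\dot B_{2,1}^{\frac d2-1}$ bound on $w$ is recovered \emph{a posteriori} from $w=(w-u)+u$, the damped mode being usable precisely because its forcing $\Delta u$ does belong to $L_t^1(\dot B_{2,1}^{\frac d2-1})$ thanks to the parabolic smoothing of $u$.

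Second, the assertion that "all nonlinear contributions are at least quadratic" and that a plain sum of the four estimates yields $\mathcal{Z}(t)\le C\mathcal{Z}_0+C\mathcal{Z}(t)^2$ overlooks the genuinely \emph{linear} coupling terms: the forcing $u$ in the $w$-equation contributes $\int_0^t\|u\|_{\dot B_{2,1}^{\frac d2+1}}d\tau$ with coefficient of order one, and $\Delta u$ in the $(w-u)$-equation contributes the same quantity with the Bernstein constant $c_B$ (see \eqref{estimée sur u} and \eqref{estimée sur u-v 2}). These terms cannot be made small by the smallness of the data; summing with unit weights gives a right-hand side of size $(1+c_B)\|u\|_{L_t^1(\dot B_{2,1}^{\frac d2+1})}$ while the heat dissipation on the left only carries the factor $1/c_B$, so the inequality $\mathcal{Z}\le C\mathcal{Z}_0+C''\mathcal{Z}+C\mathcal{Z}^2$ with $C''\ge 1$ does not close. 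The paper's proof hinges on absorbing these linear terms by the dissipation through a \emph{weighted} combination of the three estimates (the functionals $\mathcal{L}$ and $\mathcal{H}$ in \eqref{définition de L}--\eqref{définition de H}, with weights $1$, $\tfrac1{4c_B}$, $\tfrac1{2c_B^2}$), together with the smallness of $\|\rho_0\|_{\dot B_{2,1}^{\frac d2}}$ (via \eqref{estimée sur rho gronwall} and the bootstrap \eqref{hypothèse de petitesse 2}) to absorb the term $C\|\rho_0\|\int_0^t\|w-u\|_{\dot B_{2,1}^{\frac d2-1}}d\tau$. Your "careful bookkeeping" remark gestures at this, but the actual mechanism — absorption of linear cross terms by dissipation via a tuned linear combination, and recovery of the low-frequency bound on $w$ through the damped mode — is the real content of the proof and is missing from your argument; once it is supplied, the remaining steps (approximation, compactness, stability estimate and Grönwall for uniqueness) do follow the paper's route.
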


For the study of such a system, the key ingredient is to obtain good a priori global-in-time estimates. Then, very classical arguments lead to the existence and uniqueness of global solutions.

These estimates are obtained by looking at each equation of the \eqref{Euler-Navier-Stokes2} system in the corresponding Besov space. However, a crucial piece of information is missing at low frequencies. To explain this, let us look at the corresponding linearized system:
$$\left\{\begin{array}{l} \partial_t w+w-u=0, \\ \partial_t u-\Delta u=0. \end{array}\right.$$

So, using the Fourier transform, we have:
$$\frac{d}{dt}\begin{pmatrix} \widehat{w} \\ \widehat{u} \end{pmatrix}+\begin{pmatrix} Id & -Id \\ 0 & |\xi|^2 \ Id \end{pmatrix}\begin{pmatrix} \widehat{w} \\ \widehat{u} \end{pmatrix}=0.$$

The eigenvalues of the matrix are $1$ and $|\xi|^2$. In the low-frequency regime, for the equation on $u$, we only capture the eigenvalue $|\xi|^2$, and to recover the missing information, we use what is called the "damped mode", namely $w-u$. From a physical perspective, the quantity $w-u$ represents the relative velocity between the dispersed phase and the carrier fluid. The friction term therefore acts as a relaxation mechanism, tending to align the two velocities as time evolves. Consequently, one expects $w-u$ to decay faster than each velocity field separately, a phenomenon that is reflected in the decay estimates established in Theorem~\ref{estimées de décroissance}.

By obtaining an estimate on $w-u$ in the Besov space of regularity $\frac{d}{2} - 1$ and by properly combining the previous information, we can deduce the estimate \eqref{estimée théorème système 1}, which allows us to conclude the proof of Theorem~\ref{théorème existence et unicité}.

The second step is devoted to obtaining the decay estimates \eqref{taux de décroissance L2}. To this end, we use a classical approach that goes back to the work of J.~Nash~\cite{Nash} on parabolic equations, and which has been recently used, for example, by R.~Danchin in~\cite{Danchin}.

 The idea is as follows. Suppose there exists a non-negative Lyapunov functional $\mathcal{L}$ and a dissipation rate such that
\begin{equation}\label{inégalité Nash}
\frac{d}{dt}\mathcal{L} + \mathcal{H} \leq 0 \quad \text{a.e. on } \ \mathbb{R}^+.
\end{equation}

Suppose further that there exists another lower-order functional $\mathcal{N}$ such that
$$
\mathcal{N} \leq \mathcal{N}_0 \quad \text{a.e. on } \ \mathbb{R}^+.
$$

Finally, assume that $\mathcal{L}$ is an "intermediate" functional between $\mathcal{N}$ and $\mathcal{H}$, in the sense that there exist $\theta \in (0,1)$ and $C > 0$ such that
$$
\mathcal{L} \leq C \mathcal{H}^\theta \mathcal{N}^{1 - \theta} \quad \text{a.e. on } \ \mathbb{R}^+.
$$

Then, plugging this inequality into~\eqref{inégalité Nash}, we obtain
$$
\frac{d}{dt}\mathcal{L} + c_0 \mathcal{L}^{\frac{1}{\theta}} \leq 0, \quad \text{with} \quad c_0 := C^{-\frac{1}{\theta}} \mathcal{N}_0^{1 - \frac{1}{\theta}},
$$
and thus, by time integration,
$$
\mathcal{L}(t) \leq \mathcal{L}(0) \left(1 + \frac{1 - \theta}{\theta} c_0 \mathcal{L}_0^{\frac{1 - \theta}{\theta}} t \right)^{-\frac{\theta}{1 - \theta}}.
$$

In our setting, the functionals $\mathcal{L}$ and $\mathcal{H}$ will appear naturally (and be defined) in the proof of Theorem~\ref{théorème existence et unicité}. For $\mathcal{N}$, we will use the weaker norm $\|\cdot\|_{\dot{B}_{2,\infty}^{-\frac{d}{2}}}$ (instead of the $L^1$ norm).

By assuming more regularity and accordingly adjusting the functionals $\mathcal{L}$ and $\mathcal{H}$, we similarly obtain the estimates~\eqref{taux de décroissance L2 k}.

The derivation of~\eqref{taux de décroissance L2 mode amorti} is carried out "by hand", with a major difficulty arising from controlling $\Delta u$, which requires special care since we are working in a critical framework. As this approach does not hold in dimension $2$, we propose an alternative method for obtaining~\eqref{estimée décroissance u-v 2d}, with a slower decay rate compared to the case of dimensions $d \geq 3$.

\section{Well-posedness}
In this section, we first establish the a priori estimates \eqref{estimée théorème} for the system \eqref{Euler-Navier-Stokes3}. We then prove uniqueness via a stability lemma corresponding to a change of initial conditions, from which we deduce Theorem~\ref{théorème existence et unicité2}, and consequently Theorem~\ref{théorème existence et unicité}.

\subsection{A priori estimates}~\\
Let us next assume we have a smooth enough solution $(\rho, w, u)$ of system \eqref{Euler-Navier-Stokes3} on $[0,T]\times \R^d$. 

From \cite[chap. 3]{BCD}, we easily obtain the estimate on the first equation of \eqref{Euler-Navier-Stokes3} : 
\begin{lemma}\label{équation de transport}
    We have the following inequality on $\rho$ for all $t\in [0,T]$: \begin{eqnarray}\label{estimée sur rho}\|\rho(t)\|_{\dot B_{2,1}^{\frac{d}{2}}}\leq \|\rho_0\|_{\dot B_{2,1}^{\frac{d}{2}}}+C \int_0^t \|\rho\|_{\dot B_{2,1}^{\frac{d}{2}}}\|w\|_{\dot B_{2,1}^{\frac{d}{2}+1}}d\tau.\end{eqnarray}
\end{lemma}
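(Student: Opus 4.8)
The plan is to read the first equation of \eqref{Euler-Navier-Stokes3} as a transport equation for $\rho$ driven by the velocity $w$, and to invoke the standard $L^2$-based Besov transport estimate of \cite[Chapter 3]{BCD}. Expanding $\dive(\rho w)=w\cdot\nabla\rho+\rho\,\dive w$, the equation becomes $\partial_t\rho+w\cdot\nabla\rho=-\rho\,\dive w=:g$, a transport equation with source term $g$. The index $s=\frac{d}{2}$ is admissible for that estimate with $p=2$, $r=1$, since $-\frac{d}{2}<\frac{d}{2}<1+\frac{d}{2}$; moreover the drift satisfies $\nabla w\in L^1_{\mathrm{loc}}(\R^+;\dot B_{2,1}^{\frac{d}{2}})$, which (via $\dot B_{2,1}^{\frac{d}{2}}\hookrightarrow\dot B_{2,\infty}^{\frac{d}{2}}\cap L^\infty$) controls the drift quantity appearing in the transport lemma. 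The transport estimate then gives, for every $t\in[0,T]$,
\begin{equation*}
\|\rho(t)\|_{\dot B_{2,1}^{\frac{d}{2}}}\leq\|\rho_0\|_{\dot B_{2,1}^{\frac{d}{2}}}+\int_0^t\|g(\tau)\|_{\dot B_{2,1}^{\frac{d}{2}}}\,d\tau+C\int_0^t\|w(\tau)\|_{\dot B_{2,1}^{\frac{d}{2}+1}}\,\|\rho(\tau)\|_{\dot B_{2,1}^{\frac{d}{2}}}\,d\tau,
\end{equation*}
where I used the homogeneous scaling identity $\|\nabla w\|_{\dot B_{2,1}^{\frac{d}{2}}}\simeq\|w\|_{\dot B_{2,1}^{\frac{d}{2}+1}}$.

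It then remains to absorb the source term $g=-\rho\,\dive w$ into the last integral. Since $\dot B_{2,1}^{\frac{d}{2}}(\R^d)$ is a Banach algebra (product estimate, see \cite[Chapter 2]{BCD} and the appendix), one has $\|g\|_{\dot B_{2,1}^{\frac{d}{2}}}\lesssim\|\rho\|_{\dot B_{2,1}^{\frac{d}{2}}}\|\dive w\|_{\dot B_{2,1}^{\frac{d}{2}}}\lesssim\|\rho\|_{\dot B_{2,1}^{\frac{d}{2}}}\|w\|_{\dot B_{2,1}^{\frac{d}{2}+1}}$, again by the scaling identity, which plugged into the previous inequality gives exactly \eqref{estimée sur rho} with a possibly enlarged constant. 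As an alternative, the same bound can be obtained directly: applying $\dot\Delta_j$ to the conservative equation, taking the $L^2$ inner product with $\rho_j$, and using the Bony decomposition, the only main-order contribution is $\int\rho_j\,\dive\!\bigl(\dot S_{j-1}w\,\rho_j\bigr)\,dx=\tfrac12\int(\dive\dot S_{j-1}w)\,|\rho_j|^2\,dx$ thanks to the (anti)symmetry of $\dot S_{j-1}w\cdot\nabla$, while the commutator and remainder terms are handled by the usual commutator and paraproduct lemmas; multiplying by $2^{j\frac{d}{2}}$ and summing over $j\in\Z$ yields the differential inequality $\tfrac{d}{dt}\|\rho\|_{\dot B_{2,1}^{\frac{d}{2}}}\leq C\|\rho\|_{\dot B_{2,1}^{\frac{d}{2}}}\|w\|_{\dot B_{2,1}^{\frac{d}{2}+1}}$, which integrates to \eqref{estimée sur rho}. (Invoking Grönwall instead would produce the exponential form; the integral form stated in the lemma is the weaker but bootstrap-ready one.)

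The only genuinely delicate point is that $s=\frac{d}{2}$ is the \emph{critical} Besov index, so one must make sure that neither the transport/commutator estimate nor the algebra estimate loses a logarithmic factor. This is precisely what the $\ell^1$ summability in the third index of $\dot B_{2,1}^{\frac{d}{2}}$ provides (unlike $\dot H^{\frac{d}{2}}=\dot B_{2,2}^{\frac{d}{2}}$, which is not an algebra). Once the admissibility of the index and the algebra property are checked, the whole argument reduces to the cited results and is short; I expect no real obstacle here, as this lemma is essentially a warm-up — the substantive difficulties of the paper concern the coupled estimates for $w$, $u$, and the damped mode $w-u$.
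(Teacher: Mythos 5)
Your proposal is correct and follows essentially the same route as the paper, which simply invokes the transport estimates of \cite[chap.~3]{BCD} for the equation $\partial_t\rho+\dive(\rho w)=0$; your rewriting with source $-\rho\,\dive w$, the admissibility check $-\frac d2<\frac d2<1+\frac d2$ for $(p,r)=(2,1)$, and the algebra property of $\dot B_{2,1}^{\frac d2}$ are exactly the details the paper leaves implicit. No gap.
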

We then deduce by Grönwall's lemma: \begin{eqnarray}\label{estimée sur le terme de transport}
    \|\rho(t)\|_{\dot B_{2,1}^{\frac{d}{2}}}\leq \|\rho_0\|_{\dot B_{2,1}^{\frac{d}{2}}}\exp\left(C \int_0^t \|w\|_{\dot B_{2,1}^{\frac{d}{2}+1}} d\tau \right).
\end{eqnarray}

Suppose there exists $\tilde{T}>0$ such that $\tilde{T}\leq T$ and on $[0,\tilde{T}]$ \begin{eqnarray}\label{hypothèse de petitesse 2}
    \int_0^t \|w\|_{\dot B_{2,1}^{\frac{d}{2}+1}}d\tau \leq C^{-1}\ln(2),
\end{eqnarray}
where $C$ is the constant appearing in Lemma \ref{équation de transport}.

Then, we have for $t\in [0,\tilde{T}]$ : \begin{eqnarray}\label{estimée sur rho gronwall}
    \|\rho(t)\|_{\dot B_{2,1}^{\frac{d}{2}}}\leq 2\|\rho_0\|_{\dot B_{2,1}^{\frac{d}{2}}}.
\end{eqnarray}

Let us study the Navier-Stokes equation and deduce an estimate of its velocity.

\begin{lemma}\label{lemme estimée sur v}
There exists $C>0$ such that for all $t\in [0,\tilde{T}]$:  
\begin{multline}\label{estimée sur v 2} \|u(t)\|_{\dot B_{2,1}^{\frac{d}{2}-1}}+\frac{1}{c_B} \int_0^t \|u\|_{\dot B_{2,1}^{\frac{d}{2}+1}} d\tau \\ \leq \|u_0\|_{\dot B_{2,1}^{\frac{d}{2}-1}} +C\int_0^t \|u\|_{\dot B_{2,1}^{\frac{d}{2}-1}}\|u\|_{\dot B_{2,1}^{\frac{d}{2}+1}}d\tau \\ +C\|\rho_0\|_{\dot B_{2,1}^{\frac{d}{2}}}\int_0^t\|w-u\|_{\dot B_{2,1}^{\frac{d}{2}-1}}d\tau.\end{multline} where $c_B$ has been defined from \eqref{constante de Bernstein}.
\end{lemma}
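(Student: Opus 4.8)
The plan is to localise the Navier--Stokes equation in frequency and run an $L^2$ energy estimate on each dyadic block, then sum over $j\in\Z$. Applying $\dot\Delta_j$ to the third equation of \eqref{Euler-Navier-Stokes3}, and using that the Leray projector $\mathbb{P}$ is a Fourier multiplier (so it commutes with $\dot\Delta_j$), is self-adjoint on $L^2$, and fixes $u_j$ since $\dive u=0$, together with the rewriting $(u\cdot\nabla)u=\dive(u\otimes u)$, the scalar product against $u_j$ makes the projector disappear and yields
\[
\tfrac12\tfrac{d}{dt}\|u_j\|_{L^2}^2+\|\nabla u_j\|_{L^2}^2
= -\big\langle \dot\Delta_j\dive(u\otimes u),u_j\big\rangle+\big\langle \dot\Delta_j(\rho(w-u)),u_j\big\rangle .
\]
By the lower bound in Bernstein's inequality \eqref{constante de Bernstein} with $k=1$ one has $\|\nabla u_j\|_{L^2}^2\ge c_B^{-1}2^{2j}\|u_j\|_{L^2}^2$, which after summation produces the dissipative term $c_B^{-1}\int_0^t\|u\|_{\dot B_{2,1}^{\frac d2+1}}$ on the left-hand side of \eqref{estimée sur v 2}.

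Next I would bound the two terms on the right by Cauchy--Schwarz, divide through by $\|u_j\|_{L^2}$ (using the usual $\sqrt{\varepsilon^2+\|u_j\|_{L^2}^2}$ regularisation and then letting $\varepsilon\to 0$) to obtain
\[
\tfrac{d}{dt}\|u_j\|_{L^2}+c_B^{-1}2^{2j}\|u_j\|_{L^2}\le \|\dot\Delta_j\dive(u\otimes u)\|_{L^2}+\|\dot\Delta_j(\rho(w-u))\|_{L^2},
\]
then multiply by $2^{j(\frac d2-1)}$, sum over $j\in\Z$ and integrate on $[0,t]$ (interchanging $\sum_j$ and $\int_0^t$ by nonnegativity). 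It remains to estimate the two source terms in $\dot B_{2,1}^{\frac d2-1}$. For the convective term, the product law together with the fact that $\dot B_{2,1}^{\frac d2}$ is an algebra gives $\|\dive(u\otimes u)\|_{\dot B_{2,1}^{\frac d2-1}}\lesssim\|u\otimes u\|_{\dot B_{2,1}^{\frac d2}}\lesssim\|u\|_{\dot B_{2,1}^{\frac d2}}^2$, and the interpolation inequality $\|u\|_{\dot B_{2,1}^{\frac d2}}^2\le\|u\|_{\dot B_{2,1}^{\frac d2-1}}\|u\|_{\dot B_{2,1}^{\frac d2+1}}$ produces the bilinear term of \eqref{estimée sur v 2}. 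For the coupling term, the product estimate $\|\rho(w-u)\|_{\dot B_{2,1}^{\frac d2-1}}\lesssim\|\rho\|_{\dot B_{2,1}^{\frac d2}}\|w-u\|_{\dot B_{2,1}^{\frac d2-1}}$, combined with the bound $\|\rho(\tau)\|_{\dot B_{2,1}^{\frac d2}}\le 2\|\rho_0\|_{\dot B_{2,1}^{\frac d2}}$ valid on $[0,\tilde T]$ from \eqref{estimée sur rho gronwall}, gives the last term, with the harmless loss of the factor $2$ absorbed into $C$.

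I expect the only delicate point to be keeping careful track of the Leray projector through the Littlewood--Paley localisation and the $L^2$ scalar product, so that the viscous term keeps its sharp constant $c_B^{-1}$ and the pressure contribution genuinely cancels rather than being merely estimated. The product, algebra and interpolation estimates used above are classical and can be found in Chapter~2 of \cite{BCD}, and the remaining steps — the $\varepsilon$-regularisation allowing the division by $\|u_j\|_{L^2}$, the Tonelli interchange, and the use of the a priori density bound \eqref{estimée sur rho gronwall} — are routine.
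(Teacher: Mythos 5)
Your proposal is correct and follows essentially the same route as the paper: dyadic localisation of the Navier--Stokes equation, an $L^2$ energy estimate on each block with Bernstein's inequality giving the $c_B^{-1}2^{2j}$ dissipation, the ODE-type argument to pass to time-integrated norms, product laws for the two source terms, and the bound $\|\rho(\tau)\|_{\dot B_{2,1}^{d/2}}\le 2\|\rho_0\|_{\dot B_{2,1}^{d/2}}$ on $[0,\tilde T]$. The only differences are cosmetic: you remove the Leray projector exactly via self-adjointness and $\mathbb{P}u_j=u_j$ (the paper just uses its $L^2$ continuity), and you estimate the convection term as $\dive(u\otimes u)$ with the algebra property plus interpolation instead of the direct product law $\dot B_{2,1}^{d/2-1}\times\dot B_{2,1}^{d/2}\to\dot B_{2,1}^{d/2-1}$.
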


\begin{proof}
Applying the operator $\dot\Delta_j$ to the last equation of the system \eqref{Euler-Navier-Stokes3} gives: $$ \partial_t u_j-\Delta u_j=-\dot\Delta_j \mathbb{P}(u\cdot \nabla u)+\dot\Delta_j \mathbb{P}(\rho(w-u)).$$

By taking the scalar product with $u_j$, by Cauchy-Schwarz inequality and by continuity of Leray's projector on $L^2$, we have: $$\displaylines{\frac{1}{2}\frac{d}{dt}\|u_j\|_{L^2}^2-\int_{\R^d}\Delta u_j\cdot u_j dx\leq \big(\|\dot\Delta_j (u\cdot \nabla u)\|_{L^2}+\|\dot\Delta_j (\rho(w-u))\|_{L^2} \big) \|u_j\|_{L^2}.}$$

Now, by integration by parts, we have: $$-\int_{\R^d}\Delta u_j\cdot u_j dx=\int_{\R^d}|\nabla u_j|^2 dx=\|\nabla u_j\|_{L^2}^2.$$

By \eqref{constante de Bernstein}, we have: $$\frac{1}{\sqrt{c_B}} 2^{j}\|u_j\|_{L^2} \leq \|\nabla u_j\|_{L^2}.$$

By Lemma \ref{lemme edo}, by multiplying by $2^{j(\frac{d}{2}-1)}$ and summing over $j\in\Z$, we obtain: $$\displaylines{\|u(t)\|_{\dot B_{2,1}^{\frac{d}{2}-1}}+\frac{1}{c_B} \int_0^t \|u\|_{\dot B_{2,1}^{\frac{d}{2}+1}}d\tau \leq \|u_0\|_{\dot B_{2,1}^{\frac{d}{2}+1}}d\tau+\int_0^t \|(u\cdot \nabla)u\|_{\dot B_{2,1}^{\frac{d}{2}-1}}\hfill\cr\hfill+\int_0^t \|\rho(w-u)\|_{\dot B_{2,1}^{\frac{d}{2}-1}}d\tau.}$$

Now, by the product laws of Lemma \ref{Produit espace de Besov}, we have: $$\left\{ \begin{array}{l}\|u\cdot \nabla u\|_{\dot B_{2,1}^{\frac{d}{2}-1}}\lesssim \|u\|_{\dot B_{2,1}^{\frac{d}{2}-1}}\|\nabla u\|_{\dot B_{2,1}^{\frac{d}{2}}}\lesssim \|u\|_{\dot B_{2,1}^{\frac{d}{2}-1}}\|u\|_{\dot B_{2,1}^{\frac{d}{2}+1}},
\\ \|\rho(w-u)\|_{\dot B_{2,1}^{\frac{d}{2}-1}}\lesssim \|\rho\|_{\dot B_{2,1}^{\frac{d}{2}}} \|w-u\|_{\dot B_{2,1}^{\frac{d}{2}-1}}.
\end{array}\right.$$

Hence remembering \eqref{estimée sur rho gronwall} the estimate of the lemma.
\end{proof}

Now, let us establish an estimate of the velocity $w$ of the Euler equation.

\begin{lemma}\label{estimée sur u lemme}
    We have the following inequality for any $t\in [0,T]$ :
    \begin{multline}\label{estimée sur u}\|w(t)\|_{\dot B_{2,1}^{\frac{d}{2}+1}}+\int_0^t \|w\|_{\dot B_{2,1}^{\frac{d}{2}+1}}d\tau \leq \|w_0\|_{\dot B_{2,1}^{\frac{d}{2}+1}}+C\int_0^t \|w\|_{\dot B_{2,1}^{\frac{d}{2}+1}}^2 d\tau \\+\int_0^t \|u\|_{\dot B_{2,1}^{\frac{d}{2}+1}}d\tau.\end{multline}
    
\end{lemma}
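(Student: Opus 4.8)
The plan is to apply the dyadic block operator $\dot\Delta_j$ to the Euler equation for $w$, namely $\partial_t w + (w\cdot\nabla)w + w - u = 0$, and perform an $L^2$ energy estimate at each frequency. Setting $w_j := \dot\Delta_j w$, we get $\partial_t w_j + w_j = u_j - \dot\Delta_j((w\cdot\nabla)w)$; taking the scalar product with $w_j$ in $L^2$ and using Cauchy--Schwarz on the right-hand side yields
\begin{equation*}
\frac{1}{2}\frac{d}{dt}\|w_j\|_{L^2}^2 + \|w_j\|_{L^2}^2 \leq \bigl(\|u_j\|_{L^2} + \|\dot\Delta_j((w\cdot\nabla)w)\|_{L^2}\bigr)\|w_j\|_{L^2}.
\end{equation*}
The key structural point here is that, unlike the Navier--Stokes velocity which is damped only by $-\Delta$ (hence weakly at low frequencies), the Euler velocity $w$ enjoys the \emph{full} zeroth-order damping term $+w$, which after dividing by $\|w_j\|_{L^2}$ and invoking Lemma~\ref{lemme edo} (the elementary ODE lemma) gives
\begin{equation*}
\|w_j(t)\|_{L^2} + \int_0^t \|w_j\|_{L^2}\,d\tau \leq \|\dot\Delta_j w_0\|_{L^2} + \int_0^t \|u_j\|_{L^2}\,d\tau + \int_0^t \|\dot\Delta_j((w\cdot\nabla)w)\|_{L^2}\,d\tau.
\end{equation*}

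Next I would multiply by $2^{j(\frac{d}{2}+1)}$ and sum over $j\in\Z$; the left-hand side reproduces $\|w(t)\|_{\dot B_{2,1}^{\frac{d}{2}+1}} + \int_0^t \|w\|_{\dot B_{2,1}^{\frac{d}{2}+1}}\,d\tau$ (note the dissipation integral here has no $\frac{1}{c_B}$ factor, precisely because the damping is order zero rather than coming through a Bernstein inequality on $\Delta u_j$), the initial term gives $\|w_0\|_{\dot B_{2,1}^{\frac{d}{2}+1}}$, the $u$-term gives $\int_0^t \|u\|_{\dot B_{2,1}^{\frac{d}{2}+1}}\,d\tau$, and the convective term gives $\int_0^t \|(w\cdot\nabla)w\|_{\dot B_{2,1}^{\frac{d}{2}+1}}\,d\tau$. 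To close the estimate I would invoke the Besov product law from Lemma~\ref{Produit espace de Besov}: since $\frac{d}{2}+1 > 0$ and $\dot B_{2,1}^{\frac{d}{2}}$ is an algebra controlling $L^\infty$, one has $\|(w\cdot\nabla)w\|_{\dot B_{2,1}^{\frac{d}{2}+1}} \lesssim \|w\|_{\dot B_{2,1}^{\frac{d}{2}+1}}\|\nabla w\|_{\dot B_{2,1}^{\frac{d}{2}}} \lesssim \|w\|_{\dot B_{2,1}^{\frac{d}{2}+1}}^2$, which is exactly the quadratic term $C\int_0^t \|w\|_{\dot B_{2,1}^{\frac{d}{2}+1}}^2\,d\tau$ in the claimed inequality.

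The only mildly delicate points are bookkeeping ones rather than genuine obstacles: one must make sure Lemma~\ref{lemme edo} is applied with the correct sign of the damping so that the full $\int_0^t\|w_j\|$ term (with coefficient $1$) survives, and one must justify commuting the time integration with the infinite sum over $j$, which is standard for $\dot B_{2,1}$ estimates of this type (the partial sums are monotone and one passes to the limit, or one works with the a priori smooth solution on $[0,T]$ so all quantities are finite). The product-law step requires $w \in \dot B_{2,1}^{\frac{d}{2}+1}$, which is part of the functional setting $\tilde E$, so there is no circularity. I expect the whole proof to be short; if anything, the subtlety worth flagging is that this is the one place where the regularity index $\frac{d}{2}+1$ for $w$ (rather than the Navier--Stokes-critical $\frac{d}{2}-1$) is forced, because the quadratic estimate $\|(w\cdot\nabla)w\|_{\dot B^s} \lesssim \|w\|_{\dot B^s}\|w\|_{\dot B^{s}}$ with the \emph{same} index on both factors needs $s = \frac{d}{2}+1$ to match the loss of one derivative in $\nabla w$ against the algebra index $\frac{d}{2}$.
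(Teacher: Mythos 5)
Your energy-estimate skeleton (apply $\dot\Delta_j$, take the $L^2$ scalar product with $w_j$, use Lemma~\ref{lemme edo}, multiply by $2^{j(\frac{d}{2}+1)}$ and sum) is the same as the paper's, but your treatment of the convection term has a genuine gap. By applying Cauchy--Schwarz directly to $\dot\Delta_j((w\cdot\nabla)w)$ you are forced to bound $\|(w\cdot\nabla)w\|_{\dot B_{2,1}^{\frac{d}{2}+1}}$, and the inequality you invoke, $\|(w\cdot\nabla)w\|_{\dot B_{2,1}^{\frac{d}{2}+1}}\lesssim \|w\|_{\dot B_{2,1}^{\frac{d}{2}+1}}\|\nabla w\|_{\dot B_{2,1}^{\frac{d}{2}}}$, is not contained in Lemma~\ref{Produit espace de Besov} and is false in general. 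The bilinear law of that lemma requires both indices $\leq \frac{d}{2}$ (so the product can land at most in $\dot B_{2,1}^{\frac{d}{2}}$), and its tame estimate at level $\frac{d}{2}+1$ gives
$\|w\otimes\nabla w\|_{\dot B_{2,1}^{\frac{d}{2}+1}}\lesssim \|w\|_{\dot B_{2,1}^{\frac{d}{2}+1}}\|\nabla w\|_{\dot B_{2,1}^{\frac{d}{2}}}+\|w\|_{\dot B_{2,1}^{\frac{d}{2}}}\|\nabla w\|_{\dot B_{2,1}^{\frac{d}{2}+1}}$,
whose second term involves $\|w\|_{\dot B_{2,1}^{\frac{d}{2}+2}}$, which is not controlled. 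The obstruction is real, not a missing reference: in the Bony decomposition of $w\cdot\nabla w$ the paraproduct $T_w\nabla w$ has regularity capped by that of $\nabla w$, i.e.\ $\frac{d}{2}$; taking $w$ to be an $O(1)$ low-frequency profile plus a ripple at frequency $2^N$ of $\dot B_{2,1}^{\frac{d}{2}+1}$-size $\varepsilon$ makes $\|(w\cdot\nabla)w\|_{\dot B_{2,1}^{\frac{d}{2}+1}}\gtrsim \varepsilon\,2^N$ while your right-hand side stays $O(1)$. This one-derivative loss is precisely the classical difficulty of transport terms at top regularity, and your closing remark that "the same index on both factors works at $s=\frac{d}{2}+1$" is where the argument breaks.

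The paper circumvents this by never estimating $\dot\Delta_j((w\cdot\nabla)w)$ in norm: it writes $\dot\Delta_j((w\cdot\nabla)w)=(w\cdot\nabla)w_j+[\dot\Delta_j,w\cdot\nabla]w$, integrates by parts the transport part against $w_j$, so that
$-\int_{\R^d}(w\cdot\nabla)w_j\cdot w_j\,dx=\tfrac{1}{2}\int_{\R^d}\dive w\,|w_j|^2\,dx\leq \tfrac{1}{2}\|\dive w\|_{L^\infty}\|w_j\|_{L^2}^2$,
which costs only $\|w\|_{\dot B_{2,1}^{\frac{d}{2}+1}}$ via the embedding $\dot B_{2,1}^{\frac{d}{2}}\hookrightarrow L^\infty$, and controls the commutator by Lemma~\ref{commutateur} at regularity $\frac{d}{2}+1$, whose contribution after multiplying by $2^{j(\frac{d}{2}+1)}$ and summing is $\lesssim \|\nabla w\|_{\dot B_{2,1}^{\frac{d}{2}}}\|w\|_{\dot B_{2,1}^{\frac{d}{2}+1}}\lesssim \|w\|_{\dot B_{2,1}^{\frac{d}{2}+1}}^2$. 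Your remaining steps (the $u_j$ source, the full zeroth-order damping with coefficient $1$, the summation over $j$) are fine, so repairing your proof amounts to replacing the product-law step by this commutator-plus-integration-by-parts argument.
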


\begin{proof}
    By applying $\dot\Delta_j$ to the second equation of \eqref{Euler-Navier-Stokes2}, we get: $$\partial_t w_j+w_j=u_j-(w\cdot \nabla)w_j-\left(\dot\Delta_j(w\cdot \nabla)w-(w\cdot \nabla) w_j \right).$$

    By taking the scalar product with $w_j$, then we have: $$\displaylines{\frac{1}{2}\frac{d}{dt}\|w_j\|_{L^2}^2+\|w_j\|_{L^2}^2=\int_{\R^d}u_j\cdot w_j dx-\int_{\R^d} (w\cdot \nabla)w_j \cdot w_j dx \hfill\cr\hfill -\int_{\R^d} [\dot\Delta_j,w\cdot \nabla]w\cdot w_j dx.}$$

    However, we have : $$-\int_{\R^d}(w\cdot \nabla)w_j\cdot w_j dx=\frac{1}{2}\int_{\R^d} \dive(w) |w_j|^2 dx \leq \frac{1}{2}\|\dive w\|_{L^\infty}\|w_j\|_{L^2}^2.$$

    Then, by Cauchy-Schwarz inequality, we obtain: 
    $$\displaylines{\frac{1}{2}\frac{d}{dt}\|w_j\|_{L^2}^2+\|w_j\|_{L^2}^2\leq \left(\|u_j\|_{L^2}+\|[\dot\Delta_j,w\cdot\nabla]w\|_{L^2}+\|\dive w\|_{L^\infty}\|w_j\|_{L^2}\right) \hfill\cr\hfill\times \|w_j\|_{L^2}.}$$

    By Lemma \ref{lemme edo}, we get: $$\displaylines{\|w_j(t)\|_{L^2}+\int_0^t \|w_j\|_{L^2}d\tau \leq  \|w_{j,0}\|_{L^2}+\int_0^t \|[\dot\Delta_j,w\cdot\nabla]w\|_{L^2}d\tau \hfill\cr\hfill +\int_0^t \|\dive w\|_{L^\infty}\|w_j\|_{L^2}d\tau+\int_0^t \|u_j\|_{L^2}d\tau.}$$

    Now, by Lemma \ref{commutateur}, the injection $\dot B_{2,1}^{\frac{d}{2}}\hookrightarrow L^\infty$, by multiplying by $2^{j(\frac{d}{2}+1)}$ then by summing over $j\in\Z$, we obtain \eqref{estimée sur u}.
\end{proof}

To close the a priori estimates, we need information at low frequencies, given by the damped mode $w-u$, which verifies a better decay than $u$.

\begin{lemma}\label{lemme estimée sur w-u}
    We have the following estimate on $w-u$ for $t\in [0,\tilde{T}]$ where $\tilde{T}$ is defined from \eqref{hypothèse de petitesse 2} and the constant $c_B$ by \eqref{constante de Bernstein}: 
\begin{multline}\label{estimée sur u-v 2}\|(w-u)(t)\|_{\dot B_{2,1}^{\frac{d}{2}-1}}+\int_0^t \|w-u\|_{\dot B_{2,1}^{\frac{d}{2}-1}} d\tau \\ \leq \|w_0-u_0\|_{\dot B_{2,1}^{\frac{d}{2}-1}}+c_B\int_0^t \|u\|_{\dot B_{2,1}^{\frac{d}{2}+1}}d\tau +C \|\rho_0\|_{\dot B_{2,1}^{\frac{d}{2}}} \int_0^t \|w-u\|_{\dot B_{2,1}^{\frac{d}{2}-1}}d\tau \\ +C\int_0^t \|w-u\|_{\dot B_{2,1}^{\frac{d}{2}-1}}\|w\|_{\dot B_{2,1}^{\frac{d}{2}+1}}d\tau \\ +C\int_0^t \|u\|_{\dot B_{2,1}^{\frac{d}{2}-1}}\left(\|w\|_{\dot B_{2,1}^{\frac{d}{2}+1}}+\|u\|_{\dot B_{2,1}^{\frac{d}{2}+1}}\right)d\tau. \end{multline}
\end{lemma}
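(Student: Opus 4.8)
The plan is to derive an equation for the damped mode $z:=w-u$ by subtracting the third equation of \eqref{Euler-Navier-Stokes3} from the second. Writing the $w$-equation as $\partial_t w + w - u = -(w\cdot\nabla)w$ and the $u$-equation as $\partial_t u - \Delta u = -\mathbb{P}(u\cdot\nabla u) + \mathbb{P}(\rho z)$, subtraction gives
\begin{equation*}
\partial_t z + z = \Delta u - (w\cdot\nabla)w + \mathbb{P}(u\cdot\nabla u) - \mathbb{P}(\rho z).
\end{equation*}
The crucial structural point is that the damping term $z$ appears on the left with the good sign, so that when we apply $\dot\Delta_j$, take the $L^2$ scalar product with $z_j$, and invoke Lemma~\ref{lemme edo}, we get both $\|z_j(t)\|_{L^2}$ and $\int_0^t\|z_j\|_{L^2}\,d\tau$ controlled by the initial datum plus the $L^2$ norms of the localized right-hand side, integrated in time. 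Then I would multiply by $2^{j(\frac d2-1)}$ and sum over $j\in\Z$ to pass to the $\dot B_{2,1}^{d/2-1}$ norm.

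The next step is to estimate each term of the right-hand side in $\dot B_{2,1}^{d/2-1}$. The term $\Delta u$ is the delicate one in a critical framework: one has $\|\Delta u\|_{\dot B_{2,1}^{d/2-1}} \sim \|u\|_{\dot B_{2,1}^{d/2+1}}$, but we only control $u$ in $L^1_t(\dot B_{2,1}^{d/2+1})$, not in $L^\infty_t$, which is exactly why the factor $c_B$ (rather than a small constant) appears in front of $\int_0^t\|u\|_{\dot B_{2,1}^{d/2+1}}$ in the statement — the constant must be tracked carefully through the Bernstein inequality \eqref{constante de Bernstein} so that it can later be absorbed by the good term $\frac{1}{c_B}\int_0^t\|u\|_{\dot B_{2,1}^{d/2+1}}$ coming from Lemma~\ref{lemme estimée sur v}. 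For the forcing term $\mathbb{P}(\rho z)$, continuity of $\mathbb{P}$ on $L^2$ and the product law $\|\rho z\|_{\dot B_{2,1}^{d/2-1}} \lesssim \|\rho\|_{\dot B_{2,1}^{d/2}}\|z\|_{\dot B_{2,1}^{d/2-1}}$ together with \eqref{estimée sur rho gronwall} give the term $C\|\rho_0\|_{\dot B_{2,1}^{d/2}}\int_0^t\|z\|_{\dot B_{2,1}^{d/2-1}}$.

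The remaining convective terms $-(w\cdot\nabla)w + \mathbb{P}(u\cdot\nabla u)$ require a small rearrangement to match the stated form. Rather than estimating $(w\cdot\nabla)w$ directly (which would cost $\|w\|_{\dot B_{2,1}^{d/2-1}}\|w\|_{\dot B_{2,1}^{d/2+1}}$, a quantity not present in the statement), I would write $(w\cdot\nabla)w = (z\cdot\nabla)w + (u\cdot\nabla)w$ and $(u\cdot\nabla)u = (u\cdot\nabla)w - (u\cdot\nabla)z$, so that the difference regroups as $-(z\cdot\nabla)w - \mathbb{P}((u\cdot\nabla)z)$ plus a correction term from $\mathbb{P}$ vs.\ identity acting on $(u\cdot\nabla)w$; then the product laws of Lemma~\ref{Produit espace de Besov} give $\|(z\cdot\nabla)w\|_{\dot B_{2,1}^{d/2-1}} \lesssim \|z\|_{\dot B_{2,1}^{d/2-1}}\|w\|_{\dot B_{2,1}^{d/2+1}}$ and $\|(u\cdot\nabla)z\|_{\dot B_{2,1}^{d/2-1}} \lesssim \|u\|_{\dot B_{2,1}^{d/2-1}}\|z\|_{\dot B_{2,1}^{d/2+1}} \leq \|u\|_{\dot B_{2,1}^{d/2-1}}(\|w\|_{\dot B_{2,1}^{d/2+1}}+\|u\|_{\dot B_{2,1}^{d/2+1}})$, matching the last two lines of \eqref{estimée sur u-v 2}. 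I expect the main obstacle to be precisely the bookkeeping of constants around $\Delta u$: making sure the coefficient emerging from the Bernstein inequality is exactly $c_B$ and no larger, since the whole closure of the a priori estimates depends on this term being dominated by the parabolic gain in Lemma~\ref{lemme estimée sur v}. A secondary subtlety is that all manipulations must be justified at the level of the localized blocks (using commutator estimates as in Lemma~\ref{estimée sur u lemme} if one keeps the transport form $(w\cdot\nabla)w_j$), or alternatively one works directly with the product laws on the full nonlinearity, which is cleaner here since no derivative loss needs to be recovered by a commutator.
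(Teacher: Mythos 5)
Your proposal follows essentially the same route as the paper: derive the equation for $z=w-u$, apply $\dot\Delta_j$, take the $L^2$ scalar product with $z_j$, invoke Lemma~\ref{lemme edo}, multiply by $2^{j(\frac d2-1)}$ and sum, then estimate the right-hand side with the product laws after the same regrouping of the convective terms (your variant merely puts the $(Id-\mathbb{P})$ correction on $(u\cdot\nabla)w$ instead of $(u\cdot\nabla)u$, which works equally well). The only slip is the sign of $\Delta u$ in the $z$-equation (it should be $-\Delta u$), which is immaterial since that term is bounded in absolute value via Cauchy--Schwarz and the Bernstein inequality \eqref{constante de Bernstein}.
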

\begin{proof}
    We have that $w-u$ satisfies the following equation by \eqref{Euler-Navier-Stokes2} : 
    $$\partial_t (w-u)+w-u=-\Delta u-\mathbb{P}(\rho (w-u))+\mathbb{P}(u\cdot \nabla) u-(w\cdot \nabla)w.$$

By applying $\dot\Delta_j$, we obtain: 
$$\partial_t (w_j-u_j)+w_j-u_j=-\Delta u_j-\dot\Delta_j\mathbb{P}\rho (w-u)+\dot\Delta_j\left(\mathbb{P}(u\cdot \nabla) u-(w\cdot \nabla)w\right).$$

Taking the scalar product with $w_j-u_j$, then by the Cauchy-Schwarz inequality and the continuity of the Leray projector on $L^2$, we have: $$\displaylines{\frac{1}{2}\frac{d}{dt}\|w_j-u_j\|_{L^2}^2+\|w_j-u_j\|_{L^2}^2\leq \bigg(\|\Delta u_j\|_{L^2}+\|\dot\Delta_j(\rho(w-u))\|_{L^2}\hfill\cr\hfill+\|\dot\Delta_j\left(\mathbb{P}(u\cdot\nabla)u-(w\cdot\nabla)w\right)\|_{L^2}\bigg)\times\|w_j-u_j\|_{L^2}.}$$

The last term of the previous inequality can be written as follows:
$$(w\cdot\nabla)w-\mathbb{P}\left((u\cdot\nabla)u\right)=\left((w-u)\cdot\nabla\right)w+(u\cdot \nabla)(w-u)+(Id-\mathbb{P})\left((u\cdot\nabla)u\right).$$

Applying Lemma \ref{lemme edo}, then multiplying by $\displaystyle 2^{j\left({\frac{d}{2}-1}\right)}$ and summing over $j\in\Z$, we get: 
$$\displaylines{\|(w-u)(t)\|_{\dot B_{2,1}^{\frac{d}{2}-1}}+\int_0^t \|w-u\|_{\dot B_{2,1}^{\frac{d}{2}-1}}d\tau \hfill\cr \leq \|(w_{0}-u_{0})\|_{\dot B_{2,1}^{\frac{d}{2}-1}}+\int_0^t\bigg(\|\Delta u\|_{\dot B_{2,1}^{\frac{d}{2}-1}} +\|\rho(w-u)\|_{\dot B_{2,1}^{\frac{d}{2}-1}}\bigg)d\tau \hfill\cr\hfill+\int_0^t \|(w-u)\cdot\nabla w\|_{\dot B_{2,1}^{\frac{d}{2}-1}} d\tau+\int_0^t \|u\cdot\nabla(w-u)\|_{\dot B_{2,1}^{\frac{d}{2}-1}} d\tau\cr\hfill +\int_0^t \|(Id-\mathbb{P})(u\cdot\nabla)u\|_{\dot B_{2,1}^{\frac{d}{2}-1}} d\tau.}$$
Let us estimate the terms on the right-hand side of the previous inequality.

By the product laws of Lemma \ref{Produit espace de Besov}, we have:
\begin{itemize}

    \item[$\bullet$]  $\displaystyle \int_0^t \|\rho(w-u)\|_{\dot B_{2,1}^{\frac{d}{2}-1}}d\tau\lesssim \|\rho_0\|_{\dot B_{2,1}^{\frac{d}{2}}}\int_0^t \|w-u\|_{\dot B_{2,1}^{\frac{d}{2}-1}}d\tau$ by \eqref{estimée sur rho gronwall},

    \item[$\bullet$] $\displaystyle \int_0^t \|(w-u)\cdot \nabla w\|_{\dot B_{2,1}^{\frac{d}{2}-1}}d\tau\lesssim \int_0^t \|w-u\|_{\dot B_{2,1}^{\frac{d}{2}-1}}\|w\|_{\dot B_{2,1}^{\frac{d}{2}+1}}d\tau$.

    \item[$\bullet$] By triangular inequality, we also have: \begin{align*}\int_0^t \|(u\cdot\nabla)(w-u)\|_{\dot B_{2,1}^{\frac{d}{2}-1}}d\tau & \lesssim \int_0^t \|u\cdot \nabla w\|_{\dot B_{2,1}^{\frac{d}{2}-1}}d\tau +\int_0^t \|u\cdot \nabla u\|_{\dot B_{2,1}^{\frac{d}{2}-1}} d\tau \\ & \lesssim \int_0^t \|u\|_{\dot B_{2,1}^{\frac{d}{2}-1}}(\|w\|_{\dot B_{2,1}^{\frac{d}{2}+1}}+\|u\|_{\dot B_{2,1}^{\frac{d}{2}+1}})d\tau.
    \end{align*}
\item[$\bullet$] Since $Id-\mathbb{P}$ is a homogeneous Fourier multiplier of order 0, we then have by the product laws of Lemma \ref{Produit espace de Besov} : $$\int_0^t \|(Id-\mathbb{P})(u\cdot\nabla)u\|_{\dot B_{2,1}^{\frac{d}{2}-1}} d\tau \lesssim \int_0^t \|u\|_{\dot B_{2,1}^{\frac{d}{2}-1}}\|u\|_{\dot B_{2,1}^{\frac{d}{2}+1}} d\tau.$$
\end{itemize}
We then deduce the inequality of the proposition.
\end{proof}

By multiplying by $\frac{1}{4c_B}$ the inequality \eqref{estimée sur u}, by $\frac{1}{2c_B^2}$ the inequality \eqref{estimée sur u-v 2} and summing these inequalities with \eqref{estimée sur v 2}, we obtain for any $t\in[0,\tilde{T}]$ : $$\displaylines{\|u(t)\|_{\dot B_{2,1}^{\frac{d}{2}-1}}+\frac{1}{4c_B}\|w(t)\|_{\dot B_{2,1}^{\frac{d}{2}+1}}+\frac{1}{2c_B^2}\|(w-u)(t)\|_{\dot B_{2,1}^{\frac{d}{2}-1}} \hfill\cr +\int_0^t \bigg(\frac{1}{c_B} \|u\|_{\dot B_{2,1}^{\frac{d}{2}+1}}+\frac{1}{4c_B}\|w\|_{\dot B_{2,1}^{\frac{d}{2}+1}}+\frac{1}{2c_B^2}\|w-u\|_{\dot B_{2,1}^{\frac{d}{2}-1}}\bigg)d\tau 
\hfill \cr \leq \frac{1}{4 c_B}\|w_0\|_{\dot B_{2,1}^{\frac{d}{2}+1}}+\|u_0\|_{\dot B_{2,1}^{\frac{d}{2}-1}}+\frac{1}{2 c_B^2}\|w_0-u_0\|_{\dot B_{2,1}^{\frac{d}{2}-1}} \hfill\cr\hfill +\bigg(\frac{1}{4c_B}+\frac{1}{2 c_B}\bigg)\int_0^t \|u\|_{\dot B_{2,1}^{\frac{d}{2}+1}}d\tau+\bigg(1+\frac{1}{2 c_B^2}\bigg)2C \|\rho_0\|_{\dot B_{2,1}^{\frac{d}{2}}}\int_0^t \|w-u\|_{\dot B_{2,1}^{\frac{d}{2}-1}}d\tau
\hfill\cr\hfill +\tilde{C}\int_0^t \bigg(\|w\|_{\dot B_{2,1}^{\frac{d}{2}+1}}^2+\|w-u\|_{\dot B_{2,1}^{\frac{d}{2}-1}}\|w\|_{\dot B_{2,1}^{\frac{d}{2}+1}}+\|u\|_{\dot B_{2,1}^{\frac{d}{2}-1}}(\|w\|_{\dot B_{2,1}^{\frac{d}{2}+1}} \hfill\cr\hfill +\|u\|_{\dot B_{2,1}^{\frac{d}{2}+1}})\bigg)d\tau.}$$

Let us set: \begin{equation}\label{définition de L}\mathcal{L}(t)\mathrel{\mathop:}=\|u(t)\|_{\dot B_{2,1}^{\frac{d}{2}-1}}+\frac{1}{4 c_B}\|w(t)\|_{\dot B_{2,1}^{\frac{d}{2}+1}}+\frac{1}{2 c_B^2}\|(w-u)(t)\|_{\dot B_{2,1}^{\frac{d}{2}-1}}, \end{equation} and \begin{equation}\label{définition de H}\mathcal{H}(t)\mathrel{\mathop:}= \frac{1}{4c_B}\|u(t)\|_{\dot B_{2,1}^{\frac{d}{2}+1}}+\frac{1}{4 c_B}\|w(t)\|_{\dot B_{2,1}^{\frac{d}{2}+1}}+\frac{1}{4 c_B^2}\|(w-u)(t)\|_{\dot B_{2,1}^{\frac{d}{2}-1}}.\end{equation}

By assuming $\left(2+\frac{1}{c_B^2}\right)C \|\rho_0\|_{\dot B_{2,1}^{\frac{d}{2}}}\leq \frac{1}{4c_B^2}$, we have then: $$\mathcal{L}(t)+\int_0^t \mathcal{H}(\tau)d\tau \leq \mathcal{L}(0)+C\int_0^t \mathcal{L}(\tau)\mathcal{H}(\tau) d\tau.$$

\begin{prop}
    If we take $\mathcal{L}(0)$ and $\|\rho\|_{\dot B_{2,1}^{\frac{d}{2}}}$ to be sufficiently small, we have for all $t\in [0,\tilde{T}]$ : $$\mathcal{L}(t)+\frac{1}{2}\int_0^t \mathcal{H}(\tau)d\tau \leq \mathcal{L}(0).$$
\end{prop}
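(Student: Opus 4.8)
The plan is to close the integral inequality
$$\mathcal{L}(t)+\int_0^t \mathcal{H}(\tau)\,d\tau \leq \mathcal{L}(0)+C\int_0^t \mathcal{L}(\tau)\mathcal{H}(\tau)\,d\tau, \qquad t\in[0,\tilde{T}],$$
just obtained (under the assumption $\left(2+\frac{1}{c_B^2}\right)C\|\rho_0\|_{\dot B_{2,1}^{\frac{d}{2}}}\leq\frac{1}{4c_B^2}$) by a standard continuity/bootstrap argument. The point is that the nonlinear term on the right is \emph{quadratic}: it is the product of $\mathcal{L}$, which we will keep small, with $\mathcal{H}$, whose time integral already sits on the left-hand side, so it can be absorbed as long as $\mathcal{L}$ remains below a fixed threshold.

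First I would record that, since $(\rho,w,u)$ is a smooth enough solution of \eqref{Euler-Navier-Stokes3} on an interval containing $[0,\tilde{T}]$, the map $t\mapsto\mathcal{L}(t)$ from \eqref{définition de L} is continuous and nonnegative on $[0,\tilde{T}]$ with value $\mathcal{L}(0)$ at $t=0$, and $t\mapsto\int_0^t\mathcal{H}(\tau)\,d\tau$ is continuous and nondecreasing. If $\mathcal{L}(0)=0$ the inequality above forces $\mathcal{L}\equiv 0$ and $\int_0^{\cdot}\mathcal{H}\equiv 0$, so the statement is trivial; hence I may assume $\mathcal{L}(0)>0$.

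Then I would set
$$T^\star:=\sup\Bigl\{\,t\in[0,\tilde{T}]\ \Big|\ \mathcal{L}(s)\leq 2\mathcal{L}(0)\ \text{ for all } s\in[0,t]\,\Bigr\},$$
which is positive by continuity of $\mathcal{L}$. On $[0,T^\star]$ one has $C\mathcal{L}(\tau)\leq 2C\mathcal{L}(0)\leq\frac12$ provided $\mathcal{L}(0)\leq\frac{1}{4C}$, so the quadratic term is absorbed and, for $t\in[0,T^\star]$,
$$\mathcal{L}(t)+\int_0^t\mathcal{H}(\tau)\,d\tau\leq\mathcal{L}(0)+\frac12\int_0^t\mathcal{H}(\tau)\,d\tau,$$
whence $\mathcal{L}(t)+\frac12\int_0^t\mathcal{H}(\tau)\,d\tau\leq\mathcal{L}(0)$ and in particular $\mathcal{L}(t)\leq\mathcal{L}(0)<2\mathcal{L}(0)$ there. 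If $T^\star<\tilde{T}$, continuity of $\mathcal{L}$ gives $\mathcal{L}(T^\star)\leq\mathcal{L}(0)<2\mathcal{L}(0)$, hence $\mathcal{L}(s)<2\mathcal{L}(0)$ on a slightly larger interval $[0,T^\star+\delta]\subset[0,\tilde{T}]$, contradicting the definition of $T^\star$. Therefore $T^\star=\tilde{T}$ and the claimed bound holds on all of $[0,\tilde{T}]$.

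The step I expect to require the most care is not analytic but the bookkeeping of the smallness constants: the derivation of the integral inequality already used $\left(2+\frac{1}{c_B^2}\right)C\|\rho_0\|_{\dot B_{2,1}^{\frac{d}{2}}}\leq\frac{1}{4c_B^2}$ together with $\|\rho(t)\|_{\dot B_{2,1}^{\frac{d}{2}}}\leq 2\|\rho_0\|_{\dot B_{2,1}^{\frac{d}{2}}}$ on $[0,\tilde{T}]$ from \eqref{estimée sur rho gronwall}, and the bootstrap additionally needs $\mathcal{L}(0)\leq\frac{1}{4C}$; both requirements are met once $\mathcal{Z}_0\leq\alpha$ with $\alpha$ small enough, since $\mathcal{L}(0)+\|\rho_0\|_{\dot B_{2,1}^{\frac{d}{2}}}\lesssim\mathcal{Z}_0$. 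All the genuine work has been done in Lemmas \ref{équation de transport}--\ref{lemme estimée sur w-u}; this proposition is simply the continuity argument packaging those estimates.
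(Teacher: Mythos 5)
Your proposal is correct and matches the paper's argument: the paper closes the same integral inequality via the continuity/bootstrap Lemma \ref{lemme edo2} in the appendix (with a fixed small threshold $\alpha<c/(2C)$ instead of your threshold $2\mathcal{L}(0)$, an inessential difference), under exactly the smallness bookkeeping you describe. The minor gloss on the edge case $\mathcal{L}(0)=0$ is harmless, since it is handled by the same absorption argument.
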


On $w$, we can deduce a better estimate thanks to the estimates on $u$ and $w-u$. In fact, we have: $$\|w\|_{\dot B_{2,1}^{\frac{d}{2}-1}}=\|w-u+u\|_{\dot B_{2,1}^{\frac{d}{2}-1}}\leq \|w-u\|_{\dot B_{2,1}^{\frac{d}{2}-1}}+\|u\|_{\dot B_{2,1}^{\frac{d}{2}-1}}\lesssim \mathcal{L}(0).$$

We then deduce the estimates of Theorem \ref{estimée théorème}.

Clearly $t=0$ plays no particular role, and we can apply the same argument to any sub-interval of $[0,\tilde{T}]$, which leads to : \begin{eqnarray}\label{inégalité fonctionnelle}\mathcal{L}(t)+\frac{1}{2}\int_{t_0}^t \mathcal{H}(\tau)d\tau \leq \mathcal{L}(t_0), \quad 0\leq t_0\leq t \leq T.\end{eqnarray}

To conclude on the a priori estimates of Theorem \ref{théorème existence et unicité}, we still have to estimate the pressure term.

To achieve this, we just use \eqref{expression de la pression P}, the fact that $(Id-\mathbb{P})$ is a homogeneous Fourier multiplier of order $0$ and the product laws of Lemma \ref{Produit espace de Besov}. In the end, we get:
\begin{align*}  \int_0^t \|\nabla P\|_{\dot B_{2,1}^{\frac{d}{2}-1}} d\tau & \lesssim \int_0^t \|(u\cdot \nabla) u\|_{\dot B_{2,1}^{\frac{d}{2}-1}}d\tau +\int_0^t \|\rho(w-u)\|_{\dot B_{2,1}^{\frac{d}{2}-1}} d\tau \\ & \lesssim \int_0^t \|u\|_{\dot B_{2,1}^{\frac{d}{2}-1}}\|u\|_{\dot B_{2,1}^{\frac{d}{2}+1}}d\tau+\int_0^t \|\rho\|_{\dot B_{2,1}^{\frac{d}{2}}}\|w-u\|_{\dot B_{2,1}^{\frac{d}{2}-1}} d\tau  \\ & \lesssim \mathcal{L}(0).
\end{align*}

\subsection{Stability estimates and uniqueness}

First, note that we have the following lemma:
\begin{lemma}
    Let $(\rho,w,u)$ be a solution of the system \eqref{Euler-Navier-Stokes3} belonging to $\tilde{E}$ defined in \eqref{espace fonctionnel tilde E}. Then, we have : $$\rho(t)-\rho_0\in \mathcal{C}(\R^+;\dot B_{2,1}^{\frac{d}{2}-1}) \quad \text{and} \quad w\in \mathcal{C}_b(\R_+; \dot B_{2,1}^{\frac{d}{2}}).$$
\end{lemma}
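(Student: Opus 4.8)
The statement asserts two time-continuity gains for a solution $(\rho,w,u)\in\tilde E$: first, $\rho(t)-\rho_0\in\mathcal C(\R^+;\dot B_{2,1}^{\frac d2-1})$, and second, $w\in\mathcal C_b(\R^+;\dot B_{2,1}^{\frac d2})$. Both follow by reading off the corresponding evolution equations of \eqref{Euler-Navier-Stokes3} and checking that the time derivative of the relevant quantity lies in $L^1_{loc}(\R^+;\dot B_{2,1}^{\frac d2-1})$, which by integration in time gives the claimed continuity.

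For the first assertion, I would start from the transport equation $\partial_t\rho=-\dive(\rho w)$. Since $\rho-\rho_0$ vanishes at $t=0$, it suffices to show $\dive(\rho w)\in L^1_{loc}(\R^+;\dot B_{2,1}^{\frac d2-1})$. Write $\dive(\rho w)=\sum_k\partial_k(\rho w_k)$; each term $\rho w_k$ can be estimated in $\dot B_{2,1}^{\frac d2}$ by the product law of Lemma~\ref{Produit espace de Besov}, giving $\|\rho w\|_{\dot B_{2,1}^{\frac d2}}\lesssim\|\rho\|_{\dot B_{2,1}^{\frac d2}}\|w\|_{\dot B_{2,1}^{\frac d2}}$ (using $\dot B_{2,1}^{\frac d2}$ is an algebra, and interpolating $w\in\dot B_{2,1}^{\frac d2-1}\cap\dot B_{2,1}^{\frac d2+1}$ to get $w\in\dot B_{2,1}^{\frac d2}$), hence $\dive(\rho w)\in\dot B_{2,1}^{\frac d2-1}$ pointwise in $t$ with norm bounded by $C\mathcal Z(t)^2$; integrating in time over $[0,t]$ and using the a priori bound from Theorem~\ref{théorème existence et unicité2} makes this an $L^1$-in-time quantity. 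Then $\rho(t)-\rho_0=-\int_0^t\dive(\rho w)\,d\tau$ is a $\dot B_{2,1}^{\frac d2-1}$-valued absolutely continuous function of $t$, hence continuous; boundedness on $\R^+$ follows from the global a priori estimate.

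For the second assertion, use the equation $\partial_t w=u-w-(w\cdot\nabla)w$. Both $u$ and $w$ belong to $\dot B_{2,1}^{\frac d2}$ (for $u$: interpolate $u\in\mathcal C_b(\R^+;\dot B_{2,1}^{\frac d2-1})\cap L^1(\R^+;\dot B_{2,1}^{\frac d2+1})$; for $w$: as above). The convection term is handled by $\|(w\cdot\nabla)w\|_{\dot B_{2,1}^{\frac d2-1}}\lesssim\|w\|_{\dot B_{2,1}^{\frac d2-1}}\|\nabla w\|_{\dot B_{2,1}^{\frac d2}}\lesssim\|w\|_{\dot B_{2,1}^{\frac d2-1}}\|w\|_{\dot B_{2,1}^{\frac d2+1}}\in L^1_{loc}(\R^+)$. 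Thus $\partial_t w\in L^1_{loc}(\R^+;\dot B_{2,1}^{\frac d2-1})$, so $w(t)=w_0+\int_0^t\partial_tw\,d\tau$ is continuous from $\R^+$ into $\dot B_{2,1}^{\frac d2-1}$. To upgrade to continuity in $\dot B_{2,1}^{\frac d2}$, combine this $\dot B_{2,1}^{\frac d2-1}$-continuity with the known boundedness $w\in L^\infty(\R^+;\dot B_{2,1}^{\frac d2+1})$ and interpolate: the map $t\mapsto w(t)$ is weakly continuous into $\dot B_{2,1}^{\frac d2+1}$ and strongly continuous into $\dot B_{2,1}^{\frac d2-1}$, and the interpolation inequality $\|\cdot\|_{\dot B_{2,1}^{\frac d2}}\lesssim\|\cdot\|_{\dot B_{2,1}^{\frac d2-1}}^{1/2}\|\cdot\|_{\dot B_{2,1}^{\frac d2+1}}^{1/2}$ then forces strong continuity in $\dot B_{2,1}^{\frac d2}$; boundedness is immediate from the same interpolation and $\mathcal Z(t)\le C\mathcal Z_0$.

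The main obstacle is the last interpolation step for $w$: one must be slightly careful that strong continuity into the low-regularity space plus uniform boundedness in the high-regularity space genuinely yields strong continuity in the intermediate space. The standard argument is to split $w(t)$ into low and high frequencies: on $\dot S_N w$ one has continuity in every Besov norm for fixed $N$ (finitely many dyadic blocks, each continuous in $L^2$), while the high-frequency tail $(\mathrm{Id}-\dot S_N)w$ is controlled uniformly in $t$ by $2^{-N}\sup_t\|w(t)\|_{\dot B_{2,1}^{\frac d2+1}}$, which goes to $0$ as $N\to\infty$; hence $w$ is a uniform limit of $\dot B_{2,1}^{\frac d2}$-continuous functions and is therefore itself continuous. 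Everything else is a routine application of the product laws and the a priori estimate \eqref{estimée théorème}.
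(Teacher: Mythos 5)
Your argument is correct, and for the $\rho$ part it is essentially the paper's proof: write $\partial_t\rho=-\dive(\rho w)$, bound $\|\dive(\rho w)\|_{\dot B_{2,1}^{\frac d2-1}}\lesssim\|\rho\|_{\dot B_{2,1}^{\frac d2}}\|w\|_{\dot B_{2,1}^{\frac d2}}$ by the product law (with $w\in\dot B_{2,1}^{\frac d2}$ by interpolation), conclude $\partial_t\rho\in L^\infty_{loc}(\R^+;\dot B_{2,1}^{\frac d2-1})\subset L^1_{loc}$, and integrate in time. For the $w$ part you take a mild detour: the paper simply uses that, by the very definition of $\tilde E$, $w\in\mathcal C_b(\R^+;\dot B_{2,1}^{\frac d2-1}\cap\dot B_{2,1}^{\frac d2+1})$, so the interpolation inequality applied to $w(t)-w(s)$, namely $\|w(t)-w(s)\|_{\dot B_{2,1}^{\frac d2}}\lesssim\|w(t)-w(s)\|_{\dot B_{2,1}^{\frac d2-1}}^{1/2}\|w(t)-w(s)\|_{\dot B_{2,1}^{\frac d2+1}}^{1/2}$, gives continuity and boundedness in $\dot B_{2,1}^{\frac d2}$ in one line; you instead re-derive the $\dot B_{2,1}^{\frac d2-1}$-continuity from the equation on $w$ and upgrade by the low/high frequency splitting, which only requires $L^\infty_t(\dot B_{2,1}^{\frac d2+1})$ boundedness rather than continuity in the top norm. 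Both are valid; your version is more robust (it would work even if $\tilde E$ only encoded $L^\infty$ in time for the high norm) but longer than necessary given the definition of $\tilde E$. One small caveat: your claim that $\rho(t)-\rho_0$ is \emph{bounded} on $\R^+$ in $\dot B_{2,1}^{\frac d2-1}$ is not justified by what you wrote (the $L^1$-in-time control of $\|\rho w\|_{\dot B_{2,1}^{\frac d2}}$ is only local in time, since $w$ need not lie in $L^1(\R^+;\dot B_{2,1}^{\frac d2})$); fortunately the lemma only asserts continuity, $\mathcal C$ and not $\mathcal C_b$, for $\rho-\rho_0$, so this over-claim is harmless.
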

\begin{proof}
The second property comes from the interpolation inequality in Besov spaces and the fact that the solution $w$ is in $\mathcal{C}_b(\R^+;\dot B_{2,1}^{\frac{d}{2}-1}\cap \dot B_{2,1}^{\frac{d}{2}+1})$.

For the first property, let us first note that $$\|\partial_t \rho\|_{\dot B_{2,1}^{\frac{d}{2}-1}}=\|\dive(\rho w)\|_{\dot B_{2,1}^{\frac{d}{2}-1}}\lesssim \|\rho w\|_{\dot B_{2,1}^{\frac{d}{2}}}\lesssim \|\rho\|_{\dot B_{2,1}^{\frac{d}{2}}}\|w\|_{\dot B_{2,1}^{\frac{d}{2}}},$$ by the product laws of Lemma \ref{Produit espace de Besov} and the fact that $\partial_t \rho+\dive(\rho w)=0$.

  We therefore have that $\partial_t \rho \in L^\infty(\R^+; \dot B_{2,1}^{\frac{d}{2}-1})\subset L_{loc}^1(\R^+;\dot B_{2,1}^{\frac{d}{2}-1})$. We then deduce the first property by integration.
\end{proof}

\begin{remark}
This lemma is useful because we are going to study the uniqueness for $\rho-\rho_0$ in $\mathcal{C}(\R^+;\dot B_{2,1}^{\frac{d}{2}-1})$ and $w$ in $\mathcal{C}_b(\R^+;\dot B_{2,1}^{\frac{d}{2}})$.
\end{remark}

Let us prove a stability lemma for Euler-Navier-Stokes solutions with respect to initial conditions.
\begin{lemma}\label{lemme pour unicité}
    Let $(\rho_1,w_1,u_1)$ and $(\rho_2,w_2,u_2)$ be two solutions of \eqref{Euler-Navier-Stokes3} with initial data $(\rho_{1,0},w_{1,0},u_{1,0})$ and $(\rho_{2,0},w_{2,0},u_{2,0})$ respectively and belonging to $E$ defined in \eqref{espace fonctionnel E}, we have the following inequalities on $$(\delta \rho, \delta w, \delta u) \mathrel{\mathop:}= (\rho_2-\rho_2,w_2-w_1,u_2-u_1)$$ for all $t\in\R^+$ where $c_B$ is the constant in Lemma \ref{lemme estimée sur v}: 
    \begin{multline}\label{unicité estimée rho}\|\delta \rho(t)\|_{\dot B_{2,1}^{\frac{d}{2}-1}}\leq  \|\delta \rho_0\|_{\dot B_{2,1}^{\frac{d}{2}-1}}+C\int_0^t \bigg(\|\delta \rho\|_{\dot B_{2,1}^{\frac{d}{2}-1}}\|w_2\|_{\dot B_{2,1}^{\frac{d}{2}+1}} \\ +\|\rho_1\|_{\dot B_{2,1}^{\frac{d}{2}}}\|\delta w\|_{\dot B_{2,1}^{\frac{d}{2}}} \bigg) d\tau,
    \end{multline}
    \begin{multline}\label{unicité estimée v}
    \|\delta u(t)\|_{\dot B_{2,1}^{\frac{d}{2}-1}}+ \frac{1}{c_B}\int_0^t \|\delta u\|_{\dot B_{2,1}^{\frac{d}{2}+1}} \\ \leq \|\delta u_0\|_{\dot B_{2,1}^{\frac{d}{2}-1}} +C\int_0^t \|\delta u\|_{\dot B_{2,1}^{\frac{d}{2}-1}}\|(u_1,u_2)\|_{\dot B_{2,1}^{\frac{d}{2}+1}} d\tau \\ +C\int_0^t \|\delta \rho\|_{\dot B_{2,1}^{\frac{d}{2}-1}}\|w_2-u_2\|_{\dot B_{2,1}^{\frac{d}{2}}} d\tau  +C\int_0^t \|\rho_1\|_{\dot B_{2,1}^{\frac{d}{2}}}\|\delta w-\delta u\|_{\dot B_{2,1}^{\frac{d}{2}-1}}d\tau,\end{multline}
\begin{multline}\label{unicité estimée u}\|\delta w\|_{\dot B_{2,1}^{\frac{d}{2}}}+\int_0^t \|\delta w\|_{\dot B_{2,1}^{\frac{d}{2}}}^h d\tau\leq \|\delta w_0\|_{\dot B_{2,1}^{\frac{d}{2}}}+\int_0^t \bigg(\|\delta u\|_{\dot B_{2,1}^{\frac{d}{2}}}^h \\ +\|\delta w-\delta u\|_{\dot B_{2,1}^{\frac{d}{2}}}^l \bigg) d\tau +C\int_0^t \|\delta w\|_{\dot B_{2,1}^{\frac{d}{2}}}\|(w_1,w_2)\|_{\dot B_{2,1}^{\frac{d}{2}+1}}d\tau,\end{multline}
    \begin{multline}\label{unicité estimée u-v}\|(\delta w-\delta u)(t)\|_{\dot B_{2,1}^{\frac{d}{2}-1}}+\int_0^t \|\delta w-\delta u\|_{\dot B_{2,1}^{\frac{d}{2}-1}}d\tau \\ \leq \|\delta w_0-\delta u_0\|_{\dot B_{2,1}^{\frac{d}{2}+1}} +\int_0^t \|\delta u\|_{\dot B_{2,1}^{\frac{d}{2}+1}}(c_B+C\|u_1\|_{\dot B_{2,1}^{\frac{d}{2}-1}})d\tau \cr\hfill+C\int_0^t \bigg(\|\delta \rho\|_{\dot B_{2,1}^{\frac{d}{2}-1}}+\|\delta u\|_{\dot B_{2,1}^{\frac{d}{2}-1}}\bigg)\|w_2-u_2\|_{\dot B_{2,1}^{\frac{d}{2}}} d\tau \cr\hfill+C\int_0^t \bigg(\|\rho_1\|_{\dot B_{2,1}^{\frac{d}{2}}}+\|w_2\|_{\dot B_{2,1}^{\frac{d}{2}+1}}\bigg)\|\delta w-\delta u\|_{\dot B_{2,1}^{\frac{d}{2}-1}} d\tau \cr\hfill+C\int_0^t \|u_1\|_{\dot B_{2,1}^{\frac{d}{2}+1}}\|\delta w-\delta u\|_{\dot B_{2,1}^{\frac{d}{2}-1}} d\tau  +C\int_0^t \|w_1-u_1\|_{\dot B_{2,1}^{\frac{d}{2}}} \|\delta w\|_{\dot B_{2,1}^{\frac{d}{2}}}d\tau \cr\hfill + C\int_0^t \|\delta u\|_{\dot B_{2,1}^{\frac{d}{2}-1}}\|(w_2,u_2)\|_{\dot B_{2,1}^{\frac{d}{2}+1}} d\tau.\end{multline}
\end{lemma}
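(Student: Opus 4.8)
The plan is to derive each of the four inequalities by subtracting the corresponding equations of the system \eqref{Euler-Navier-Stokes3} written for the two solutions, localizing with $\dot\Delta_j$, performing an $L^2$ energy estimate on the dyadic block, and then multiplying by the appropriate weight $2^{js}$ and summing over $j\in\Z$ — exactly as in the a priori estimates of Lemmas \ref{équation de transport}--\ref{lemme estimée sur w-u}, but now tracking the bilinear terms that couple differences with the two individual solutions. The key structural point, already flagged in the preceding lemma, is that the natural regularity for the differences drops by one derivative: one works with $\delta\rho,\delta w-\delta u\in\dot B_{2,1}^{\frac{d}{2}-1}$, $\delta u\in\dot B_{2,1}^{\frac{d}{2}-1}$ with $\delta u\in L^1(\dot B_{2,1}^{\frac{d}{2}+1})$, and $\delta w\in\dot B_{2,1}^{\frac{d}{2}}$ — the last one being forced because the transport equation for $\delta\rho$ sees $\dive(\rho_1\delta w)$, which by the product law sits in $\dot B_{2,1}^{\frac{d}{2}-1}$ only if $\delta w\in\dot B_{2,1}^{\frac{d}{2}}$.

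For \eqref{unicité estimée rho}: write $\partial_t\delta\rho+\dive(\delta\rho\, w_2)+\dive(\rho_1\,\delta w)=0$, apply the transport estimate from \cite[chap.~3]{BCD} in $\dot B_{2,1}^{\frac{d}{2}-1}$ (the transport velocity $w_2\in\dot B_{2,1}^{\frac{d}{2}+1}$ controls the flow), and bound the source $\dive(\rho_1\delta w)$ by $\|\rho_1\|_{\dot B_{2,1}^{\frac{d}{2}}}\|\delta w\|_{\dot B_{2,1}^{\frac{d}{2}}}$ via Lemma \ref{Produit espace de Besov}. For \eqref{unicité estimée v}: subtract the Navier--Stokes equations to get $\partial_t\delta u+\mathbb{P}(u_2\cdot\nabla)\delta u+\mathbb{P}(\delta u\cdot\nabla)u_1=\Delta\delta u+\mathbb{P}(\delta\rho(w_2-u_2))+\mathbb{P}(\rho_1(\delta w-\delta u))$; the parabolic energy estimate (using Bernstein as in Lemma \ref{lemme estimée sur v} to recover the $\frac{1}{c_B}\int\|\delta u\|_{\dot B_{2,1}^{\frac{d}{2}+1}}$) together with the product laws gives the three source terms listed. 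For \eqref{unicité estimée u}: subtract the Euler equations, $\partial_t\delta w+\delta w-\delta u+(w_2\cdot\nabla)\delta w+(\delta w\cdot\nabla)w_1=0$; use the damped-transport ODE Lemma \ref{lemme edo} with the commutator Lemma \ref{commutateur} at regularity $\frac{d}{2}$, splitting $\|\delta u\|$ into high frequencies (absorbed as $\|\delta u\|^h_{\dot B_{2,1}^{d/2}}$, controlled since $\delta u\in L^1(\dot B_{2,1}^{d/2+1})$) and the low-frequency part of $\delta w-\delta u$ (controlled by the $\dot B_{2,1}^{\frac d2-1}$ norm of the damped mode, since $\dot B_{2,1}^{\frac d2-1}$ and $\dot B_{2,1}^{\frac d2}$ agree up to a harmless factor on low frequencies). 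For \eqref{unicité estimée u-v}: subtract the equations for $w-u$ used in Lemma \ref{lemme estimée sur w-u}, expand the quadratic terms as $(\delta w-\delta u)\cdot\nabla w_2 + u_1\cdot\nabla(\delta w-\delta u)$ plus the extra pieces $(\delta u\cdot\nabla)u_2$, $(w_1-u_1)\cdot\nabla\delta w$, and the $(Id-\mathbb{P})$ remainder, then run the damped energy estimate exactly as before; the term $\Delta\delta u$ is the one that must be kept as $\|\delta u\|_{\dot B_{2,1}^{\frac d2+1}}$ with the $c_B$ prefactor so it can later be absorbed by the gain in \eqref{unicité estimée v}.

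The main obstacle is the same loss-of-derivative mismatch that already complicates the a priori estimates: the source $\Delta\delta u$ in the equations for $\delta u$ and $\delta w-\delta u$ lives at regularity $\frac{d}{2}-1$ only if $\delta u$ is controlled at $\frac{d}{2}+1$, which is precisely the parabolic gain — so the four inequalities are genuinely a coupled system, and one must be careful that every occurrence of $\|\delta u\|_{\dot B_{2,1}^{\frac d2+1}}$ comes with a small enough constant (here $c_B$, or $c_B+C\|u_1\|_{\dot B_{2,1}^{d/2-1}}$) to be swallowed by the $\frac{1}{c_B}\int\|\delta u\|_{\dot B_{2,1}^{d/2+1}}$ on the left of \eqref{unicité estimée v} after the final linear combination. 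A secondary subtlety is the appearance of $\|\delta w\|_{\dot B_{2,1}^{d/2}}$ (not $\frac d2-1$) in \eqref{unicité estimée rho} and \eqref{unicité estimée u-v}: one has to verify that the product laws genuinely deliver this — $\|\rho_1\delta w\|_{\dot B_{2,1}^{d/2}}\lesssim\|\rho_1\|_{\dot B_{2,1}^{d/2}}\|\delta w\|_{\dot B_{2,1}^{d/2}}$ uses the algebra structure of $\dot B_{2,1}^{d/2}$, and $\|(w_1-u_1)\cdot\nabla\delta w\|_{\dot B_{2,1}^{d/2-1}}\lesssim\|w_1-u_1\|_{\dot B_{2,1}^{d/2-1}}\|\nabla\delta w\|_{\dot B_{2,1}^{d/2-1}}$ uses the symmetric product law at negative-shifted indices — and that these are the only places where the $\dot B_{2,1}^{d/2}$-regularity of $\delta w$ is actually needed, so the scheme closes.
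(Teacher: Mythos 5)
Your overall scheme is the same as the paper's: you form the same difference system (your splitting of the convection terms, e.g. $\mathbb{P}(u_2\cdot\nabla)\delta u+\mathbb{P}(\delta u\cdot\nabla)u_1$ instead of the paper's $\mathbb{P}(u_1\cdot\nabla)\delta u+\mathbb{P}(\delta u\cdot\nabla)u_2$, is an equivalent relabeling), you localize with $\dot\Delta_j$, run the damped/parabolic block energy estimates with Lemma \ref{lemme edo}, use Bernstein to extract the $\frac{1}{c_B}$ gain, and work at the same regularities ($\delta\rho,\delta u,\delta w-\delta u$ at level $\frac{d}{2}-1$, $\delta w$ at level $\frac{d}{2}$) with the same low/high frequency splitting of the damping in the $\delta w$ equation. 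Two steps, however, are not justified as written. First, for \eqref{unicité estimée v} you assert that ``the product laws give the three source terms'': for the convection term in which the gradient falls on the difference, $\mathbb{P}(u_2\cdot\nabla)\delta u$, Lemma \ref{Produit espace de Besov} alone only yields a bound by $\|u_2\|_{\dot B_{2,1}^{\frac{d}{2}}}\|\delta u\|_{\dot B_{2,1}^{\frac{d}{2}}}$, not the stated $\|\delta u\|_{\dot B_{2,1}^{\frac{d}{2}-1}}\|u_2\|_{\dot B_{2,1}^{\frac{d}{2}+1}}$; you must use $\dive u_2=0$, integrate by parts and apply the commutator Lemma \ref{commutateur}, exactly as the paper does for $(u_1\cdot\nabla)\delta u$, and the same remark applies to $(u_1\cdot\nabla)(\delta w-\delta u)$ in \eqref{unicité estimée u-v}, which is precisely the term the paper singles out for a ``specific treatment''.

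Second, the inequality you invoke, $\|(w_1-u_1)\cdot\nabla\delta w\|_{\dot B_{2,1}^{\frac{d}{2}-1}}\lesssim\|w_1-u_1\|_{\dot B_{2,1}^{\frac{d}{2}-1}}\|\nabla\delta w\|_{\dot B_{2,1}^{\frac{d}{2}-1}}$, is not a valid product law: by Lemma \ref{Produit espace de Besov} the product of two functions in $\dot B_{2,1}^{\frac{d}{2}-1}$ lands only in $\dot B_{2,1}^{\frac{d}{2}-2}$ (and at the endpoint $d=2$ only in $\dot B_{2,\infty}^{-1}$); $\dot B_{2,1}^{\frac{d}{2}-1}$ is not an algebra. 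What the product laws actually give is a bound by $\|w_1-u_1\|_{\dot B_{2,1}^{\frac{d}{2}}}\|\delta w\|_{\dot B_{2,1}^{\frac{d}{2}}}$, and one then reduces $\|w_1-u_1\|_{\dot B_{2,1}^{\frac{d}{2}}}$ by a low/high frequency splitting (as the paper does for $\|w_2-u_2\|_{\dot B_{2,1}^{\frac{d}{2}}}$ immediately after the lemma) or by interpolation, which still produces a factor that is locally integrable in time and therefore suffices for the Grönwall argument of Proposition \ref{proposition stabilité}. With these two repairs your argument closes and coincides with the paper's proof.
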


\begin{proof}
   We have that $(\delta \rho, \delta w,\delta u)$ verifies the following system: 
    \begin{equation}\label{système pour unicité}\left\{\begin{array}{l}
    \partial_t \delta \rho +\dive(\delta \rho \ w_2)+\dive(\rho_1 \delta w)=0,      
    \\
    \partial_t \delta w+(\delta w\cdot \nabla)w_2+(w_1\cdot \nabla)\delta w+\delta w-\delta u=0,
    \\
    \partial_t \delta u+\mathbb{P}\left((\delta u\cdot \nabla)u_2\right)+\mathbb{P}\left((u_1\cdot \nabla)\delta u\right) \\ \hfill =\Delta \delta u+ \mathbb{P} \left(\delta \rho(w_2-u_2)\right)  +\mathbb{P}\left(\rho_1 (\delta w-\delta u)\right).
    \end{array} \right.\end{equation}

Let us prove the estimates of the proposition one by one:

\begin{enumerate}
    \item For the first equation of the system, we can argue as in the proof of the lemma \ref{équation de transport} by putting in source term $\dive(\rho_1 \delta w).$ We can then easily obtain: $$\displaylines{\|\delta \rho(t)\|_{\dot B_{2,1}^{\frac{d}{2}-1}}\leq  \|\delta \rho_0\|_{\dot B_{2,1}^{\frac{d}{2}-1}}+C\int_0^t \bigg(\|\delta \rho\|_{\dot B_{2,1}^{\frac{d}{2}-1}}\|w_2\|_{\dot B_{2,1}^{\frac{d}{2}+1}} \hfill\cr\hfill +\|\dive(\rho_1 \delta w)\|_{\dot B_{2,1}^{\frac{d}{2}-1}} \bigg) d\tau.}$$
By product laws of Lemma \ref{Produit espace de Besov}, we then have: $$\|\dive(\rho_1 \delta w)\|_{\dot B_{2,1}^{\frac{d}{2}-1}}\lesssim \|\rho_1 \delta w\|_{\dot B_{2,1}^{\frac{d}{2}}}\lesssim \|\rho_1\|_{\dot B_{2,1}^{\frac{d}{2}}}\|\delta w\|_{\dot B_{2,1}^{\frac{d}{2}}},$$
  and deduce \eqref{unicité estimée rho}.
\item By applying the localization operator $\dot\Delta_j$ to the third equation of the system \eqref{système pour unicité}, taking the scalar product with $\delta u_j$, using the fact that the Leray operator is symmetrical on $L^2$ and $\mathbb{P} \delta u_j=\delta u_j$, we obtain: $$\displaylines{\frac{1}{2}\frac{d}{dt}\|\delta u_j\|_{L^2}^2-\int_{\R^d} \Delta \delta u_j\cdot \delta u_j dx \hfill\cr\hfill =-\int_{\R^d} \dot\Delta_j(\delta u\cdot \nabla)u_2\cdot \delta u_j dx -\int_{\R^d}\dot\Delta_j(u_1\cdot \nabla)\delta u\cdot \delta u_j dx \hfill\cr\hfill-\int_{\R^d} \dot\Delta_j\left(\delta \rho(w_2-u_2)\right)\cdot \delta u_j dx -\int_{\R^d} \dot\Delta_j\left(\rho_1(\delta w-\delta u)\right)\cdot \delta u_j dx.}$$

Now, we have by integration by parts and inequality \eqref{constante de Bernstein} : $$-\int_{\R^d} \Delta \delta u_j\cdot \delta u_j dx=\|\nabla \delta u_j\|_{L^2}^2\geq  \frac{1}{c_B} 2^{2j}\|u_j\|_{L^2}^2.$$

In addition, we also have: $$\dot\Delta_j(u_1\cdot \nabla)\delta u=(u_1\cdot \nabla)\delta u_j+[\dot\Delta_j,u_1\cdot \nabla]\delta u.$$

By the condition $\dive u_1=0$, we have by integral by parts: $$\int_{\R^d} (u_1\cdot \nabla)\delta u_j\cdot \delta u_j dx=0.$$

By commutator estimates of Lemma \ref{commutateur}, by the Cauchy-Schwarz inequality and then by Lemma \ref{lemme edo}, by multiplying by $2^{j(\frac{d}{2}-1)}$ and summing over $j\in\Z$, we obtain: $$\displaylines{\|\delta u\|_{\dot B_{2,1}^{\frac{d}{2}-1}}+\frac{1}{c_B}\int_0^t \|\delta u\|_{B_{2,1}^{\frac{d}{2}+1}}d\tau \leq \|\delta u_0\|_{\dot B_{2,1}^{\frac{d}{2}-1}}+\int_0^t \|(\delta u\cdot \nabla) u_2\|_{\dot B_{2,1}^{\frac{d}{2}-1}}d\tau \hfill\cr\hfill+C\int_0^t \|u_1\|_{\dot B_{2,1}^{\frac{d}{2}+1}}\|\delta u\|_{\dot B_{2,1}^{\frac{d}{2}-1}} d\tau+ \int_0^t \|\delta \rho(w_2-u_2)\|_{\dot B_{2,1}^{\frac{d}{2}-1}} d\tau \cr\hfill+\int_0^t \|\rho_1(\delta w-\delta u)\|_{\dot B_{2,1}^{\frac{d}{2}-1}}d\tau.}$$
By product laws of Lemma \ref{Produit espace de Besov}, we then have $$\|(\delta u\cdot\nabla)u_2\|_{\dot B_{2,1}^{\frac{d}{2}-1}}\lesssim \|\delta u\|_{\dot B_{2,1}^{\frac{d}{2}-1}}\|\nabla u_2\|_{\dot B_{2,1}^{\frac{d}{2}}}\lesssim \|\delta u\|_{\dot B_{2,1}^{\frac{d}{2}-1}}\|u_2\|_{\dot B_{2,1}^{\frac{d}{2}+1}}$$ and deduce \eqref{unicité estimée v}.
\item Putting in source term $(\delta w\cdot \nabla)w_2$, separating low and high frequencies for the linear term $w-u$ and following the same procedure as in the proof of the lemma \ref{estimée sur u lemme} with here the use of the commutator estimate of Lemma \ref{commutateur} in $\dot B_{2,1}^{\frac{d}{2}}$, we have: 
$$\displaylines{\|\delta w(t)\|_{\dot B_{2,1}^{\frac{d}{2}}}+\int_0^t \|\delta w\|_{\dot B_{2,1}^{\frac{d}{2}}}^h d\tau \hfill\cr\hfill \leq \|\delta w_0\|_{\dot B_{2,1}^{\frac{d}{2}}}+\int_0^t \left(\|\delta w-\delta u\|_{\dot B_{2,1}^{\frac{d}{2}}}^l+\|\delta u\|_{\dot B_{2,1}^{\frac{d}{2}}}^h\right) d\tau \hfill\cr\hfill+C\int_0^t \|\delta w\|_{\dot B_{2,1}^{\frac{d}{2}}}\|w_1\|_{\dot B_{2,1}^{\frac{d}{2}+1}}d\tau +\int_0^t \|(\delta w\cdot \nabla)w_2\|_{\dot B_{2,1}^{\frac{d}{2}}}d\tau.}$$ 

Now, according to the product laws, we have : $$\|(\delta w\cdot \nabla)w_2\|_{\dot B_{2,1}^{\frac{d}{2}}}\lesssim \|\delta w\|_{\dot B_{2,1}^{\frac{d}{2}}}\|\nabla w_2\|_{\dot B_{2,1}^{\frac{d}{2}}}\lesssim\|\delta w\|_{\dot B_{2,1}^{\frac{d}{2}}}\|w_2\|_{\dot B_{2,1}^{\frac{d}{2}+1}}.$$
We deduce \eqref{unicité estimée u}.

\item By subtracting the third equation from the second in the system \eqref{système pour unicité}, we obtain the following equation: 
$$\displaylines{\partial_t (\delta w-\delta u)+\delta w-\delta u=-\Delta \delta u-\mathbb{P}\delta \rho(w_2-u_2)-\mathbb{P}\left(\rho_1(\delta w-\delta u)\right) \hfill\cr\hfill-\mathbb{P}\left((\delta w-\delta u)\cdot \nabla\right) w_2 -(\delta u\cdot \nabla)(w_2-u_2)-(Id-\mathbb{P})\left((\delta u\cdot \nabla)u_2\right) \cr\hfill-\left((w_1-u_1)\cdot \nabla\right)\delta w - (u_1\cdot \nabla)(\delta w-\delta u)-(Id-\mathbb{P})\left((u_1\cdot \nabla)\delta u\right).}$$

Let us estimate the nonlinear terms, except for the term $(u_1\cdot \nabla)(\delta w-\delta u)$ which will be treaten after, in the space $\dot B_{2,1}^{\frac{d}{2}-1}$ by using the product laws from Lemma \ref{Produit espace de Besov}.
\begin{itemize}
    \item[$\bullet$] $\|\mathbb{P}\delta\rho (w_2-u_2)\|_{\dot B_{2,1}^{\frac{d}{2}-1}}\lesssim \|\delta\rho (w_2-u_2)\|_{\dot B_{2,1}^{\frac{d}{2}-1}}\lesssim\|\delta\rho\|_{\dot B_{2,1}^{\frac{d}{2}-1}} \|w_2-u_2\|_{\dot B_{2,1}^{\frac{d}{2}}}$
    \item[$\bullet$] $\|\mathbb{P}\rho_1 (\delta w-\delta u)\|_{\dot B_{2,1}^{\frac{d}{2}-1}}\lesssim \|\rho_1 (\delta w-\delta u)\|_{\dot B_{2,1}^{\frac{d}{2}-1}}\lesssim\|\rho_1\|_{\dot B_{2,1}^{\frac{d}{2}}} \|\delta w-\delta u\|_{\dot B_{2,1}^{\frac{d}{2}-1}}$
    \item[$\bullet$]$\|\mathbb{P}\left((\delta w-\delta u)\cdot \nabla\right) w_2\|_{\dot B_{2,1}^{\frac{d}{2}-1}}\lesssim \|(\delta w-\delta u)\cdot \nabla w_2\|_{\dot B_{2,1}^{\frac{d}{2}-1}}\\ \lesssim \|\delta w-\delta u\|_{\dot B_{2,1}^{\frac{d}{2}-1}}\| \nabla w_2\|_{\dot B_{2,1}^{\frac{d}{2}}}  \lesssim \|\delta w-\delta u\|_{\dot B_{2,1}^{\frac{d}{2}-1}}\|w_2\|_{\dot B_{2,1}^{\frac{d}{2}+1}}$
    \item[$\bullet$] $\|(\delta u\cdot \nabla)(w_2-u_2)\|_{\dot B_{2,1}^{\frac{d}{2}-1}}\lesssim \|\delta u\|_{\dot B_{2,1}^{\frac{d}{2}-1}}\|\nabla(w_2-u_2)\|_{\dot B_{2,1}^\frac{d}{2}} \\ \lesssim\|\delta u\|_{\dot B_{2,1}^{\frac{d}{2}-1}}\|(w_2,u_2)\|_{\dot B_{2,1}^{\frac{d}{2}+1}} $
    \item[$\bullet$] Similarly to the previous term, we have: $$\displaylines{\|(Id-\mathbb{P})\left((\delta u\cdot \nabla)u_2\right) \|_{\dot B_{2,1}^{\frac{d}{2}-1}}\lesssim\|(\delta u\cdot \nabla)u_2 \|_{\dot B_{2,1}^{\frac{d}{2}-1}} \lesssim \|\delta u\|_{\dot B_{2,1}^{\frac{d}{2}-1}}\|u_2\|_{\dot B_{2,1}^{\frac{d}{2}+1}} .}$$
    \item[$\bullet$] $\|\left((w_1-u_1)\cdot \nabla\right)\delta w\|_{\dot B_{2,1}^{\frac{d}{2}-1}}\lesssim \|w_1-u_1\|_{\dot B_{2,1}^{\frac{d}{2}}}\|\nabla\delta w\|_{\dot B_{2,1}^{\frac{d}{2}-1}} \\\lesssim \|w_1-u_1\|_{\dot B_{2,1}^{\frac{d}{2}}}\|\delta w\|_{\dot B_{2,1}^{\frac{d}{2}}}$.
    \item[$\bullet$] $\|(Id-\mathbb{P})\left((u_1\cdot \nabla)\delta u\right)\|_{\dot B_{2,1}^{\frac{d}{2}-1}}\lesssim \|\left((u_1\cdot \nabla)\delta u\right)\|_{\dot B_{2,1}^{\frac{d}{2}-1}} \\ \lesssim \|u_1\|_{\dot B_{2,1}^{\frac{d}{2}-1}}\|\nabla \delta u\|_{\dot B_{2,1}^{\frac{d}{2}}}\lesssim\|u_1\|_{\dot B_{2,1}^{\frac{d}{2}-1}}\|\delta u\|_{\dot B_{2,1}^{\frac{d}{2}}}$.
\end{itemize}

By applying the localization operator $\dot\Delta_j$, applying the Cauchy-Schwarz inequality, by the lemma \ref{lemme edo}, multiplying by $2^{j(\frac{d}{2}-1)}$, summing over $j\in\Z$ and using product laws of Lemma \ref{Produit espace de Besov}, we deduce \eqref{unicité estimée u-v} with a specific treatment of the term $\dot\Delta_j(u_1\cdot \nabla)(\delta w-\delta u)$. Indeed we have: 
$$\displaylines{\int_{\R^d}\dot\Delta_j\left(u_1\cdot\nabla(\delta w-\delta u)\right)\cdot (\delta w_j-\delta u_j)dx \hfill\cr= \int_{\R^d}u_1\cdot\nabla(\delta w_j-\delta u_j)\cdot (\delta w_j-\delta u_j)dx \hfill\cr\hfill+\int_{\R^d}[\dot\Delta_j,u_1\cdot\nabla](\delta w-\delta u)\cdot (\delta w_j-\delta u_j)dx.}$$

By means of classical arguments, seen in the proof of inequality \eqref{unicité estimée v} (by integration by parts for the first term and using commutator estimates for the second), there exists a constant $C>0$ and a subsequence $(c_j)_{j\in\Z}$ such that $\sum_{j\in\Z}c_j\leq 1$ :
$$\displaylines{\left|\int_{\R^d}\dot\Delta_j\left(u_1\cdot\nabla(\delta w-\delta u)\right)\cdot (\delta w_j-\delta u_j)dx\right| \hfill\cr\hfill\leq C c_j 2^{-j(\frac{d}{2}-1)} \|u_1\|_{\dot B_{2,1}^{\frac{d}{2}+1}}\|\delta w-\delta u\|_{\dot B_{2,1}^{\frac{d}{2}-1}}\|\delta w_j-\delta u_j\|_{L^2}.}$$
\end{enumerate}
\end{proof}

Observing that for $i\in\{1,2\} $$$\displaylines{\|w_i-u_i\|_{\dot B_{2,1}^{\frac{d}{2}}}= \|w_i-u_i\|_{\dot B_{2,1}^{\frac{d}{2}}}^l+\|w_i-u_i\|_{\dot B_{2,1}^{\frac{d}{2}}}^h \hfill\cr\hfill \leq \|w_i-u_i\|_{\dot B_{2,1}^{\frac{d}{2}-1}}^l+\|w_i\|_{\dot B_{2,1}^{\frac{d}{2}+1}}^h+\|u_i\|_{\dot B_{2,1}^{\frac{d}{2}}},}$$
we deduce the following stability result by summing the inequalities of the previous lemma (multiplying by $\frac{1}{2 c_B^2}$) \eqref{unicité estimée u-v}) and absorbing the negligible terms of the right-hand member by the terms of the left-hand member:
\begin{prop}\label{proposition stabilité}
    Let $(\rho_1,w_1,u_1)$ and $(\rho_2,w_2,u_2)$ be two solutions of \eqref{Euler-Navier-Stokes3} with initial data $(\rho_{1,0},w_{1,0},u_{1,0})$ and $(\rho_{2,0},w_{2,0},u_{2,0})$ respectively and belonging to $E$. We set: $$\mathcal{Z}(t)\mathrel{\mathop:}= \|\delta \rho(t)\|_{\dot B_{2,1}^{\frac{d}{2}-1}}+\|\delta w(t)\|_{\dot B_{2,1}^{\frac{d}{2}}}+\|\delta u(t)\|_{\dot B_{2,1}^{\frac{d}{2}-1}}+\frac{1}{2c_B^2}\|(\delta w-\delta u)(t)\|_{\dot B_{2,1}^{\frac{d}{2}-1}}.$$  There exists a constant $c\in \R$ such that if for all $t\in[0,T]$, $\|u_1(t)\|_{\dot B_{2,1}^{\frac{d}{2}-1}}$ is smaller than $c$, then we have for any $t\in [0,T]$ :
    $$\displaylines{\mathcal{Z}(t)\leq \mathcal{Z}(0)+C\int_0^t \mathcal{Z}(\tau) \bigg(\|\rho_1\|_{\dot B_{2,1}^{\frac{d}{2}}}+\|(w_1,w_2)\|_{\dot B_{2,1}^{\frac{d}{2}+1}}+\|(u_1,u_2)\|_{\dot B_{2,1}^{\frac{d}{2}}\cap \dot B_{2,1}^{\frac{d}{2}+1}} \hfill\cr\hfill +\|(w_1-u_1,w_2-u_2)\|_{\dot B_{2,1}^{\frac{d}{2}-1}}\bigg) d\tau.}$$
\end{prop}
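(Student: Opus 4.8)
The plan is to combine the four differential inequalities of Lemma~\ref{lemme pour unicité} into a single one. Concretely, I would keep \eqref{unicité estimée rho} and \eqref{unicité estimée v} with weight $1$ and add \eqref{unicité estimée u} and \eqref{unicité estimée u-v} each with weight $\frac{1}{2c_B^2}$. Summing, the left-hand side becomes the functional
$$\|\delta\rho\|_{\dot B_{2,1}^{\frac{d}{2}-1}}+\|\delta u\|_{\dot B_{2,1}^{\frac{d}{2}-1}}+\frac{1}{2c_B^2}\|\delta w\|_{\dot B_{2,1}^{\frac{d}{2}}}+\frac{1}{2c_B^2}\|\delta w-\delta u\|_{\dot B_{2,1}^{\frac{d}{2}-1}},$$
which is equivalent to $\mathcal{Z}(t)$, together with the dissipation integrals $\frac{1}{c_B}\int_0^t\|\delta u\|_{\dot B_{2,1}^{\frac{d}{2}+1}}d\tau$ coming from the viscosity in \eqref{unicité estimée v}, $\frac{1}{2c_B^2}\int_0^t\|\delta w-\delta u\|_{\dot B_{2,1}^{\frac{d}{2}-1}}d\tau$ coming from the damping of the mode $\delta w-\delta u$ in \eqref{unicité estimée u-v}, and $\frac{1}{2c_B^2}\int_0^t\|\delta w\|_{\dot B_{2,1}^{\frac{d}{2}}}^h d\tau$ from the high-frequency damping of $\delta w$ in \eqref{unicité estimée u}.

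The decisive step is then to absorb into these dissipation integrals the right-hand-side terms that are not already of the product form ``(a norm of $\delta\rho,\delta w,\delta u$ or $\delta w-\delta u$)$\times$(a norm of a reference solution)''. Inspecting the four inequalities, there are exactly three such terms: from \eqref{unicité estimée u-v}, the viscous contribution $\frac{1}{2c_B^2}\int_0^t(c_B+C\|u_1\|_{\dot B_{2,1}^{\frac{d}{2}-1}})\|\delta u\|_{\dot B_{2,1}^{\frac{d}{2}+1}}d\tau$; and from \eqref{unicité estimée u}, the terms $\frac{1}{2c_B^2}\int_0^t\|\delta u\|_{\dot B_{2,1}^{\frac{d}{2}}}^h d\tau$ and $\frac{1}{2c_B^2}\int_0^t\|\delta w-\delta u\|_{\dot B_{2,1}^{\frac{d}{2}}}^l d\tau$. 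For the first I would write $\frac{1}{2c_B^2}(c_B+C\|u_1\|_{\dot B_{2,1}^{\frac{d}{2}-1}})=\frac{1}{2c_B}+\frac{C}{2c_B^2}\|u_1\|_{\dot B_{2,1}^{\frac{d}{2}-1}}$: the constant part $\frac{1}{2c_B}$ is strictly smaller than the available coefficient $\frac{1}{c_B}$, while the remaining part is made arbitrarily small by the hypothesis $\|u_1(t)\|_{\dot B_{2,1}^{\frac{d}{2}-1}}\le c$ upon choosing $c$ small enough in terms of $c_B$ (and of the fixed constants from Lemma~\ref{lemme pour unicité}). For the second, $\|\delta u\|_{\dot B_{2,1}^{\frac{d}{2}}}^h\le\|\delta u\|_{\dot B_{2,1}^{\frac{d}{2}+1}}$, so it fits under the remaining part of $\frac{1}{c_B}\int_0^t\|\delta u\|_{\dot B_{2,1}^{\frac{d}{2}+1}}d\tau$; for the third, $\|\delta w-\delta u\|_{\dot B_{2,1}^{\frac{d}{2}}}^l\le\frac12\|\delta w-\delta u\|_{\dot B_{2,1}^{\frac{d}{2}-1}}$, so it is absorbed by half of $\frac{1}{2c_B^2}\int_0^t\|\delta w-\delta u\|_{\dot B_{2,1}^{\frac{d}{2}-1}}d\tau$. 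This calibration of the weights against $c_B$ together with the smallness of $\|u_1\|_{\dot B_{2,1}^{\frac{d}{2}-1}}$ is the main --- and essentially the only delicate --- point of the proof; it is precisely what allows the viscous term carried by the damped mode to be controlled in the critical framework.

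All the remaining right-hand-side terms are routine. Each is a product of a norm of $\delta\rho,\delta w,\delta u$ or $\delta w-\delta u$ --- hence bounded by $\mathcal{Z}(\tau)$ up to the fixed weights --- with one of $\|\rho_1\|_{\dot B_{2,1}^{\frac{d}{2}}}$, $\|(w_1,w_2)\|_{\dot B_{2,1}^{\frac{d}{2}+1}}$, $\|(u_1,u_2)\|_{\dot B_{2,1}^{\frac{d}{2}+1}}$, $\|u_1\|_{\dot B_{2,1}^{\frac{d}{2}-1}}$ or $\|w_2-u_2\|_{\dot B_{2,1}^{\frac{d}{2}}}$; for this last factor I insert the elementary splitting recalled just before the statement, $\|w_2-u_2\|_{\dot B_{2,1}^{\frac{d}{2}}}\le\|w_2-u_2\|_{\dot B_{2,1}^{\frac{d}{2}-1}}^l+\|w_2\|_{\dot B_{2,1}^{\frac{d}{2}+1}}^h+\|u_2\|_{\dot B_{2,1}^{\frac{d}{2}}}$, so that every surviving factor of a reference solution lies in the list appearing in the statement. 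Dropping the nonnegative leftover dissipation on the left, I obtain $\widetilde{\mathcal{Z}}(t)\le\widetilde{\mathcal{Z}}(0)+C\int_0^t\mathcal{Z}(\tau)\big(\|\rho_1\|_{\dot B_{2,1}^{\frac{d}{2}}}+\|(w_1,w_2)\|_{\dot B_{2,1}^{\frac{d}{2}+1}}+\|(u_1,u_2)\|_{\dot B_{2,1}^{\frac{d}{2}}\cap\dot B_{2,1}^{\frac{d}{2}+1}}+\|(w_1-u_1,w_2-u_2)\|_{\dot B_{2,1}^{\frac{d}{2}-1}}\big)d\tau$ for the weighted functional $\widetilde{\mathcal{Z}}$ of the first paragraph; since $\widetilde{\mathcal{Z}}$ and $\mathcal{Z}$ are comparable, the claimed inequality follows after renaming the constant.
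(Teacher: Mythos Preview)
Your proof is correct and follows the paper's approach: sum the four inequalities of Lemma~\ref{lemme pour unicité} with weights calibrated against $c_B$, use the splitting of $\|w_2-u_2\|_{\dot B_{2,1}^{d/2}}$ recalled just before the proposition, and absorb the three non-product terms into the left-hand dissipation. The only difference is cosmetic: the paper multiplies just \eqref{unicité estimée u-v} by $\tfrac{1}{2c_B^2}$ (consistent with the weight $1$ on $\|\delta w\|_{\dot B_{2,1}^{d/2}}$ in the stated $\mathcal{Z}$), whereas you also put $\tfrac{1}{2c_B^2}$ on \eqref{unicité estimée u}, producing an equivalent functional $\widetilde{\mathcal{Z}}$; your choice in fact makes the absorption of $\|\delta w-\delta u\|_{\dot B_{2,1}^{d/2}}^l$ cleaner, at the cost of a harmless extra constant in front of $\mathcal{Z}(0)$ after passing back to $\mathcal{Z}$.
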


Let $(\rho_1,w_1,u_1)$ be the solution found in the existence part of Theorem \ref{théorème existence et unicité} and $(\rho_2,w_2,u_2)$ be a solution of \eqref{Euler-Navier-Stokes3} with the same regularity and initial data $(\rho_{0},w_{0},u_{0})$. 

By Proposition \ref{proposition stabilité} and Grönwall's Lemma, we obtain the uniqueness of the solution because $\mathcal{Z}(0)=0$.

\subsection{Existence of solutions}

Now let us prove the existence part of the theorem \ref{théorème existence et unicité}.

The first step is to approach the system \eqref{Euler-Navier-Stokes3}.

Let $J_n$ be the spectral truncation operator on $\displaystyle \{\xi\in\R^d,\;  n^{-1}\leq |\xi|\leq n\}$. 

Consider the following system: 
\begin{eqnarray*}
    \left\{\begin{array}{l}
      \partial_t \rho+\dive\left(J_n\left((J_n \rho) (J_n w)\right)\right)=0, \\
     \partial_t w + J_n\left(( J_n(w)\cdot \nabla)J_n w\right)+J_n w- J_n u=0, \\
     \partial_t u+J_n \mathbb{P}\left((J_n u\cdot \nabla) J_n u\right)=\Delta J_n u+J_n\mathbb{P}\left( J_n(\rho)(J_n w-J_n u)\right).
\end{array} \right.
\end{eqnarray*}

\begin{itemize}
    \item[$\bullet$] By the Cauchy-Lipschitz theorem, we have (using the spectral truncation operator) that the system admits a unique maximal solution $(\rho_n,w_n,u_n)\in \mathcal{C}^1([0,T^n[: L^2)$ with initial data ($J_n \rho_0, J_n w_0 ,J_n u_0)$ for all $n\in\N$.
\item[$\bullet$] We have $J_n \rho_n=\rho_n$, $J_n w_n=w_n$ and  $J_n u_n=u_n$ (using uniqueness in the previous system), whence 
\begin{eqnarray*}
    \left\{\begin{array}{l}
      \partial_t \rho_n+\dive\left(J_n\left(\rho_n w_n\right)\right)=0, \\
     \partial_t w_n + J_n\left((w_n\cdot \nabla)w_n\right)+w_n- u_n=0, \\
     \partial_t u_n+J_n\mathbb{P}\left((u_n\cdot \nabla) u_n\right)=\Delta u_n+J_n \mathbb{P}\left(\rho_n(w_n-u_n)\right).
\end{array} \right.
\end{eqnarray*}
\item[$\bullet$] From the a priori estimate \eqref{estimée théorème} and by the truncation and orthogonality properties of the operator $J_n$, we deduce (the index $n$ of the following terms corresponding to the sequence $(\rho_n,w_n,u_n)$) for all $t\in [0,T^n[$ : $$\mathcal{L}^n(t)+\int_0^t \mathcal{H}^n(\tau)d\tau \leq \mathcal{L}^n(0)\leq \mathcal{L}(0).$$
In particular, since $\mathcal{L}(0)$ and $\|\rho_0\|_{\dot B_{2,1}^{\frac{d}{2}}}$ are assumed to be small by hypothesis, by extension argument of the maximum solution, we have that $T^n=+\infty$. 
\end{itemize}

The previous estimates guarantee that $(\rho_n, w_n, u_n)_{n\in\N}$ is a bounded sequence in $\tilde{E}$ defined in \eqref{espace fonctionnel tilde E}.

In particular, we have for all $n\in\N$, $(w_n, u_n)$ bounded (by interpolation) in $L^2\left(\dot B_{2,1}^{\frac{d}{2}}\right)$ and $\rho_n$ bounded in $L_T^\infty(\dot B_{2,1}^{\frac{d}{2}})$.

Now, $\dot B_{2,1}^{\frac{d}{2}}$ is locally compact in $L^2$.

We can apply Ascoli's theorem and, with diagonal extraction, show that, up to extraction, the sequence of approximate solutions $(\rho_n, w_n,u_n)_{n\in\N}$ converges strongly to $(\rho,w,u)$ in $L^2([0,T[;L_{loc}^2(\R^3))$.

By classical arguments of weak compactness, we can conclude as in \cite{BCD} that $(\rho,w,u)$ belongs to $\tilde{E}$ and that $(\rho,w,u)$ is a solution of \eqref{Euler-Navier-Stokes2}.

\section{Time decay estimates}
This section is dedicated to proving Theorem \ref{estimées de décroissance} and \ref{estimées de décroissance2}.
\subsection{Time decay estimates for “critical” Besov spaces}
Let us prove the following precise result on decay estimates:
\begin{theorem}\label{estimées de décroissance précises}
    Under the same assumptions of Theorem \ref{théorème existence et unicité}, if the initial data $(\rho_0,w_0,u_0)$ also verifies \eqref{condition L1} then $w$ and $u$ satisfy the following inequality: \begin{equation}\label{estimée de décroissance besov}
        \|w(t)\|_{\dot B_{2,1}^{\frac{d}{2}+1}}+\|u(t)\|_{\dot B_{2,1}^{\frac{d}{2}-1}}+\|(w-u)(t)\|_{\dot B_{2,1}^{\frac{d}{2}-1}}\lesssim (1+c_0 t)^{\frac{1-d}{2}}\mathcal{Z}_0
    \end{equation}
where $c_0$ is a constant depending on $\|(w_0,u_0)\|_{\dot B_{2,\infty}^{-\frac{d}{2}}}$.

We have additional information for $d\geq 3$ : 
    \begin{equation}\label{estimée décroissance besov u-v 3d}
        \underset{t\geq 0}{\sup}\left((1+c_0 t)^{\frac{d+1}{2}}\|(w-u)(t)\|_{\dot B_{2,1}^{\frac{d}{2}-1}}\right)\lesssim \mathcal{Z}_0
    \end{equation}
 
In the case of dimension $d=2$, we have: 
    \begin{equation}\label{estimée décroissance besov u-v 2d}
        \underset{t\geq 0}{\sup}\left((1+c_0 t)\|(w-u)(t)\|_{\dot B_{2,1}^{0}}\right)\lesssim \mathcal{Z}_0
    \end{equation}
\end{theorem}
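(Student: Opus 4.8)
The plan is to run the Nash–Danchin scheme sketched in Section~2 at the level of the functionals $\mathcal L$ and $\mathcal H$ defined in \eqref{définition de L}–\eqref{définition de H}, using $\|(w,u)\|_{\dot B_{2,\infty}^{-\frac d2}}$ as the low‑order functional $\mathcal N$. First I would establish that $\mathcal N(t):=\|w(t)\|_{\dot B_{2,\infty}^{-\frac d2}}+\|u(t)\|_{\dot B_{2,\infty}^{-\frac d2}}$ stays bounded by $\mathcal N_0\lesssim \mathcal Z_0+\|(w_0,u_0)\|_{\dot B_{2,\infty}^{-\frac d2}}$ for all time: apply $\dot\Delta_j$ to the $w$‑ and $u$‑equations of \eqref{Euler-Navier-Stokes3}, take $L^2$ inner products, and bound the nonlinear terms $\|(w\cdot\nabla)w\|_{\dot B_{2,\infty}^{-\frac d2}}$, $\|\mathbb P(u\cdot\nabla)u\|_{\dot B_{2,\infty}^{-\frac d2}}$, $\|\mathbb P(\rho(w-u))\|_{\dot B_{2,\infty}^{-\frac d2}}$ by the product laws, using that the high‑regularity norms are already controlled by $\mathcal Z_0$ via Theorem~\ref{théorème existence et unicité}; the damping term $+w$ on the $w$ equation and the Laplacian on the $u$ equation only help. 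This gives $\frac{d}{dt}\mathcal N\lesssim \mathcal Z_0\,\mathcal H$ (or $\lesssim \mathcal Z_0\,\mathcal N$ depending on how one splits), hence $\mathcal N(t)\le C\mathcal N_0$ after absorbing the small factor $\mathcal Z_0$.

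Next comes the interpolation inequality $\mathcal L\le C\,\mathcal H^{\theta}\mathcal N^{1-\theta}$ with $\theta=\frac{d}{d+2}$, which is where the homogeneous‑Besov bookkeeping is delicate. The point is that each term of $\mathcal L$ has regularity exactly one notch below the corresponding term of $\mathcal H$ in the dissipative variables ($\dot B_{2,1}^{d/2-1}$ for $u$ and $w-u$ versus $\dot B_{2,1}^{d/2+1}$; and $\dot B_{2,1}^{d/2+1}$ for $w$ appears in both $\mathcal L$ and $\mathcal H$, which is a genuine subtlety — I would handle the $w$ piece by noting it is already dissipated at rate $1$, so it decays exponentially and can be absorbed, or alternatively keep it and observe $\|w\|_{\dot B_{2,1}^{d/2+1}}\le \|w-u\|_{\dot B_{2,1}^{d/2-1}}^{\,\cdot}\cdots$ via low/high splitting plus $\|u\|_{\dot B_{2,1}^{d/2+1}}$). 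For the low frequencies of $u$ and $w-u$ one uses $\|f\|^l_{\dot B_{2,1}^{s}}\lesssim \|f\|_{\dot B_{2,\infty}^{-d/2}}^{1-\theta}\,\|f\|_{\dot B_{2,1}^{s+2}}^{\theta}$ by summing the dyadic pieces with the two‑sided geometric weights (choosing the cutoff $j\sim$ optimal); for the high frequencies one uses that there $\|f\|_{\dot B_{2,1}^{s}}\lesssim \|f\|_{\dot B_{2,1}^{s+2}}$ directly and this is already $\le \mathcal H$, so no interpolation is needed. Plugging into $\frac{d}{dt}\mathcal L+\mathcal H\le 0$ (the differential inequality \eqref{inégalité fonctionnelle} in differential form) yields $\frac{d}{dt}\mathcal L+c_0\mathcal L^{1/\theta}\le 0$ with $c_0\simeq \mathcal N_0^{-2/d}$, hence $\mathcal L(t)\lesssim (1+c_0t)^{-\theta/(1-\theta)}=(1+c_0t)^{-d/2}$; combined with $\|w\|_{\dot B_{2,1}^{d/2+1}}+\|u\|_{\dot B_{2,1}^{d/2-1}}+\|w-u\|_{\dot B_{2,1}^{d/2-1}}\simeq \mathcal L$ this is exactly \eqref{estimée de décroissance besov} after checking the exponent $\frac{d}{2}\cdot\frac{?}{?}$ — in fact the stated rate is $(1+c_0t)^{(1-d)/2}$, which is \emph{weaker} than $-d/2$, so the first bound follows a fortiori (the gap is exploited for the damped mode).

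The heart of the theorem is the improved rate \eqref{estimée décroissance besov u-v 3d} for $w-u$, proved "by hand" as the sketch warns. Here I would return to the damped‑mode equation $\partial_t(w-u)+(w-u)=\Delta u-\mathbb P(\rho(w-u))+\mathbb P(u\cdot\nabla)u-(w\cdot\nabla)w$, apply $\dot\Delta_j$, and use Duhamel in the scalar ODE with damping rate $1$: $\|(w_j-u_j)(t)\|_{L^2}\lesssim e^{-ct}\|w_{j,0}-u_{j,0}\|_{L^2}+\int_0^t e^{-c(t-\tau)}\big(\|\Delta u_j\|_{L^2}+\text{(nonlinear)}_j\big)d\tau$. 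Multiplying by $2^{j(d/2-1)}$, summing, and inserting the already‑known decay of $\|u\|_{\dot B_{2,1}^{d/2+1}}$ (which controls $\|\Delta u\|_{\dot B_{2,1}^{d/2-1}}$) and of the quadratic terms, the convolution $\int_0^t e^{-c(t-\tau)}(1+c_0\tau)^{-\gamma}d\tau\lesssim (1+c_0t)^{-\gamma}$ propagates the rate; the main obstacle, exactly as flagged, is that $\|\Delta u\|_{\dot B_{2,1}^{d/2-1}}=\|u\|_{\dot B_{2,1}^{d/2+1}}$ sits at the top of the critical scale and its known decay is only the first‑step rate, so one must bootstrap: feed the improved rate for $w-u$ back into a sharper decay estimate for the low frequencies of $u$ (using $u=w-(w-u)$ and that $w$ is exponentially damped, so the slow part of $u$ is governed by $w-u$), iterate, and close at the fixed point $\gamma=\frac{d+1}{2}$. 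In dimension $d=2$ the convolution $\int_0^t e^{-c(t-\tau)}(1+c_0\tau)^{-1}d\tau$ only returns $(1+c_0t)^{-1}$ (the would‑be rate $\frac{d+1}{2}=\frac32$ is not attainable because the $L^2$ decay of $u$ itself saturates at $(1+c_0t)^{-1/2}$ and $\Delta u$ costs too much), which forces the weaker \eqref{estimée décroissance besov u-v 2d}; I would treat this case separately by the same Duhamel argument but stopping the bootstrap at the first stage. I expect the bootstrap for $\Delta u$ and the careful low/high‑frequency split of the $w$‑contribution to $\mathcal L$ versus $\mathcal H$ to be the two places demanding the most care.
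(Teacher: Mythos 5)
Your overall scheme for \eqref{estimée de décroissance besov} (propagation of the $\dot B_{2,\infty}^{-\frac d2}$ bound, then Nash's argument on $\mathcal L,\mathcal H$) is indeed the paper's route, but your exponents are wrong: interpolating $\dot B_{2,1}^{\frac d2-1}$ between $\dot B_{2,\infty}^{-\frac d2}$ and $\dot B_{2,1}^{\frac d2+1}$ puts weight $\theta_0=\frac{2}{d+1}$ on the low norm, so one gets $\frac{d}{dt}\mathcal L+c_0\mathcal L^{\frac{d+1}{d-1}}\le 0$ and the rate $(1+c_0t)^{-\frac{d-1}{2}}$, not $\theta=\frac{d}{d+2}$ and not $(1+c_0t)^{-\frac d2}$; the latter is not just unjustified but false (already for the heat flow, $\|e^{t\Delta}u_0\|_{\dot B_{2,1}^{\frac d2-1}}\lesssim t^{-\frac{d-1}{2}}\|u_0\|_{\dot B_{2,\infty}^{-\frac d2}}$ is sharp), so your ``a fortiori'' step rests on an unprovable intermediate claim, even though the correct exponent gives exactly the stated rate. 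Likewise, your treatment of the $w$-term that sits in both $\mathcal L$ and $\mathcal H$ invokes exponential decay of $w$, which is false: $w$ solves $\partial_t w+w=u-(w\cdot\nabla)w$, so it relaxes to $u$ and decays only algebraically, at the same rate as $u$. The paper's fix is much simpler: every term of $\mathcal L$ except the low frequencies of $u$ already appears in (or is dominated by) $\mathcal H$, and since all these quantities are $O(1)$ by the a priori estimate \eqref{estimée théorème système 1}, raising them to the power $\frac{1}{1-\theta_0}\ge 1$ costs nothing.

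The genuine gap is in the key estimate \eqref{estimée décroissance besov u-v 3d}. Your bootstrap is built on ``$u=w-(w-u)$ and $w$ is exponentially damped, so the slow part of $u$ is governed by $w-u$''; since $w$ is not exponentially damped this step fails, and it is in any case circular ($w$ decays no faster than $u$). Consequently the scalar Duhamel bound on $\partial_t(w-u)+(w-u)=\Delta u+\ldots$ cannot reach the rate $\frac{d+1}{2}$: at low frequencies $\|\Delta u\|^l_{\dot B_{2,1}^{\frac d2-1}}=\|u\|^l_{\dot B_{2,1}^{\frac d2+1}}$ is only known to decay pointwise in time like $(1+c_0t)^{-\frac{d-1}{2}}$, and no iteration of your type upgrades it. The paper gains the extra decay from the \emph{structure of the linear system}: it treats the pair $(w-u,u)$, whose triangular $2\times 2$ semigroup (Lemma \ref{semi-groupe}) transmits the data $u_0$ and the forcing of the $u$-equation into $w-u$ through the kernel $g(t,|\xi|)$ with $|g|\le 2te^{-t|\xi|^2}$ — i.e.\ the gain is the factor $|\xi|^2\simeq 2^{2j}$ at low frequencies, not any fast decay of $w$ — while a separate high-frequency lemma (parabolic smoothing $e^{-c_02^{2j}t}\le e^{-c_0t}$ for $u$, plain Duhamel with damping for $w$) controls $\|(w,u)\|^h_{\dot B_{2,1}^{\frac d2-1}}$; the low- and high-frequency estimates are then closed against each other by the smallness of $\mathcal Z_0$, not by converging to a fixed point in the exponent. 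Your 2D remark points in the right direction, but the actual obstruction in the paper is the failure for $d=2$ of the inequality $(1+c_0t)\le(1+c_0t)^{\frac{d-1}{2}}$ and of the product estimate \eqref{cas 2}, remedied by working with $\dot B_{2,\infty}^{-1}$, rather than a saturation of the $L^2$ decay of $u$.
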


To prove this theorem, let us apply the method used by Nash in \cite{Nash}, as explained in the introduction.

\subsubsection{Propagation of negative regularity}
Let us first prove that regularity in $\dot B_{2,\infty}^{-\frac{d}{2}}$ propagates for any time with the following proposition:

\begin{prop}\label{propagation régularité négative}~

    Let $(\rho,w,u)$ be a solution of \eqref{Euler-Navier-Stokes2} with initial data~$(\rho_0,w_0,u_0)$ satisfying \eqref{condition initiale} and \eqref{condition L1}.

    Then, there holds for all $t\in\R^+$ : $$\underset{t\in\R^+}{\sup}\|(w,u)(t)\|_{\dot B_{2,\infty}^{-\frac{d}{2}}}\leq C \|(w_0,u_0)\|_{\dot B_{2,\infty}^{-\frac{d}{2}}}.$$
\end{prop}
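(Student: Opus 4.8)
The plan is to run an energy estimate at the level of each dyadic block, exactly as in the proofs of Lemmas~\ref{lemme estimée sur v}, \ref{estimée sur u lemme} and \ref{lemme estimée sur w-u}, but now measuring everything in the weaker space $\dot B_{2,\infty}^{-\frac d2}$ instead of the critical spaces, and to close the estimate using a Gr\"onwall argument whose exponential factor is controlled by the (already established) global bound $\int_0^\infty \mathcal H(\tau)\,d\tau \lesssim \mathcal Z_0$. Concretely, I would apply $\dot\Delta_j$ to the equations for $w$ and for the damped mode $w-u$ (rather than $u$ directly, since at low frequencies the parabolic smoothing for $u$ alone is too weak — this is the same reason the damped mode was needed in Section~3), take the $L^2$ scalar product with $w_j$ and $(w-u)_j$ respectively, use the continuity of $\mathbb P$ on $L^2$, integrate by parts in the transport terms (using $\dive u=0$ and $\|\dive w\|_{L^\infty}\lesssim \|w\|_{\dot B_{2,1}^{d/2+1}}$), and invoke Lemma~\ref{lemme edo} together with the commutator estimates of Lemma~\ref{commutateur}.

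The key point is the bookkeeping of the product and commutator terms: every nonlinear term is of the form (something in a critical space, integrable in time by Theorem~\ref{estimée théorème}) times (the quantity we are propagating, in $\dot B_{2,\infty}^{-\frac d2}$). One uses the product law
$$\|fg\|_{\dot B_{2,\infty}^{-\frac d2}} \lesssim \|f\|_{\dot B_{2,1}^{\frac d2}}\|g\|_{\dot B_{2,\infty}^{-\frac d2}},$$
(and its variant with a derivative, $\|f\nabla g\|_{\dot B_{2,\infty}^{-\frac d2}}\lesssim \|f\|_{\dot B_{2,1}^{\frac d2}}\|g\|_{\dot B_{2,\infty}^{-\frac d2+1}}$, absorbing the extra derivative into the $\dot B_{2,1}^{\frac d2+1}$ norm of the coefficient), so that schematically, setting $\mathcal M(t):=\|(w,u)(t)\|_{\dot B_{2,\infty}^{-\frac d2}}$ (plus $\|(w-u)(t)\|_{\dot B_{2,\infty}^{-\frac d2}}$), one arrives at
$$\mathcal M(t) \le \mathcal M(0) + C\int_0^t \mathcal M(\tau)\,\bigl(\|w\|_{\dot B_{2,1}^{\frac d2+1}}+\|u\|_{\dot B_{2,1}^{\frac d2+1}}+\|\rho\|_{\dot B_{2,1}^{\frac d2}}+\|w-u\|_{\dot B_{2,1}^{\frac d2-1}}\bigr)\,d\tau.$$
Here I must be slightly careful with the $\Delta u$ term appearing in the equation for $w-u$: written in $\dot B_{2,\infty}^{-\frac d2}$ it costs two derivatives, i.e.\ it is controlled by $\|u\|_{\dot B_{2,\infty}^{-\frac d2+2}}$, which is \emph{not} one of the quantities being propagated; the fix is either to keep this term on the left via the dissipation it generates (the parabolic term $\int |\nabla u_j|^2$ in the $u$-equation, combined à la Section~3 after multiplying the blocks by suitable powers of $c_B$), or — more simply — to propagate $\|u\|_{\dot B_{2,\infty}^{-\frac d2}}$ directly from the Navier--Stokes equation (whose heat semigroup is a contraction on every $\dot B_{2,\infty}^s$) and then propagate $\|w\|_{\dot B_{2,\infty}^{-\frac d2}}$ from the damped ODE structure $\partial_t w + w = u + \text{(transport)}$, which gives $\|w\|_{\dot B_{2,\infty}^{-\frac d2}}\lesssim \|w_0\|+\int_0^t(\|u\|_{\dot B_{2,\infty}^{-\frac d2}}+\dots)$, closing the loop.

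Having obtained the integral inequality above, Gr\"onwall's lemma gives
$$\sup_{t\in\R^+}\mathcal M(t) \le \mathcal M(0)\,\exp\!\Bigl(C\!\int_0^\infty \bigl(\|w\|_{\dot B_{2,1}^{\frac d2+1}}+\|u\|_{\dot B_{2,1}^{\frac d2+1}}+\|w-u\|_{\dot B_{2,1}^{\frac d2-1}}\bigr)d\tau + C\!\sup_{t}\|\rho\|_{\dot B_{2,1}^{\frac d2}}\cdot\infty\Bigr),$$
where the first integral is $\lesssim \mathcal Z_0$ by~\eqref{estimée théorème} (recall $\mathcal L(t)+\tfrac12\int_0^t\mathcal H\le\mathcal L(0)$ bounds all the time-integral norms globally), and the $\rho$-term must be handled not by Gr\"onwall in this crude form but by absorbing it: since it multiplies $\mathcal M$ with the small constant $\|\rho_0\|_{\dot B_{2,1}^{d/2}}\le\alpha$, one splits it off and, for $\alpha$ small enough, absorbs $\tfrac12\int_0^t\mathcal M\|\rho\|$ (no — this still diverges); the correct route is that in the coupled system the $\rho$-dependent forcing only enters the $u$ and $w-u$ equations through $\rho(w-u)$, which in $\dot B_{2,\infty}^{-\frac d2}$ is bounded by $\|\rho\|_{\dot B_{2,1}^{d/2}}\|w-u\|_{\dot B_{2,\infty}^{-\frac d2}}\lesssim \alpha\,\mathcal M$, and $\int_0^\infty\alpha\,d\tau$ still diverges — so one instead pairs it with the damping: the $w-u$ block equation has the good term $+\|(w-u)_j\|_{L^2}^2$ on the left, yielding $\|w-u\|_{\dot B_{2,\infty}^{-\frac d2}}(t)\le e^{-t}\|w_0-u_0\|+\int_0^t e^{-(t-\tau)}(\dots)d\tau$, so the constant forcing is integrated against an exponentially decaying kernel and stays bounded. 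I expect this interplay between the non-integrable-in-time coefficients (the $\rho$ term and the $\Delta u$ term) and the damping/dissipation to be the main obstacle; everything else is a routine repetition of the Section~3 estimates with the indices shifted down to $\dot B_{2,\infty}^{-\frac d2}$, and gives the claimed bound $\sup_t\|(w,u)(t)\|_{\dot B_{2,\infty}^{-\frac d2}}\le C\|(w_0,u_0)\|_{\dot B_{2,\infty}^{-\frac d2}}$ with $C=C(\alpha)$.
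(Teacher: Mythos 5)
Your proposal is correct and follows essentially the same route as the paper: blockwise energy estimates for $w$, $u$ and the damped mode $w-u$ in $\dot B_{2,\infty}^{-\frac d2}$, the endpoint product law $\|fg\|_{\dot B_{2,\infty}^{-\frac d2}}\lesssim\|f\|_{\dot B_{2,1}^{\frac d2}}\|g\|_{\dot B_{2,\infty}^{-\frac d2}}$, the global bounds of Theorem~\ref{théorème existence et unicité2} to make the quadratic coefficients time-integrable, the damping of $w-u$ to absorb the non-time-integrable coupling $\rho(w-u)$, and the parabolic dissipation to control the $\Delta u$ forcing --- the paper merely packages the damping/dissipation information as Chemin--Lerner $\tilde L^1_t(\dot B_{2,\infty}^{-\frac d2})$ controls and closes by smallness of $\mathcal{Z}_0$ rather than by Gr\"onwall with exponential kernels. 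One minor point: drop the integration-by-parts/commutator step for the transport terms, since Lemma~\ref{commutateur} is stated only for $-\frac d2<s<\frac d2+1$ and is not needed here; the product law alone suffices at regularity $-\frac d2$, exactly as in the paper's proof.
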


\begin{proof} For this proof, we introduce the Chemin-Lerner space $\tilde{L}^q([0,T];\dot B_{2,r}^s)$ as the set of functions in $L^q([0,T];\dot B_{2,r}^s)$ with the norm \begin{equation}\label{norme tilde}\|u\|_{\tilde{L}_T^q(\dot B_{2,r}^s)}\mathrel{\mathop:}=\left\{\begin{array}{ll}
     \|(2^{js}\|u_j\|_{L_T^q(L^2)})_{j\in\Z}\|_{l^r(\Z)}<\infty & \text{if} \ 1\leq q < +\infty \\
      \|(2^{js}\underset{t\in [0,T]}{\sup}\|u_j(t)\|_{L^2})_{j\in\Z}\|_{l^r(Z)}<\infty & \text{if} \ q=\infty
\end{array}\right.\end{equation}

By Minkowski's inequality, we have: $$\|u\|_{\tilde{L}_T^q(\dot B_{2,r}^s)}\leq \|u\|_{L_T^q(\dot B_{2,r}^s)} \quad \text{if} \ r\geq q, \quad \|u\|_{\tilde{L}_T^q(\dot B_{2,r}^s)}\geq \|u\|_{L_T^q(\dot B_{2,r}^s)} \quad \text{if} \ r\leq q.$$
\begin{enumerate}
    \item By applying the localization operator $\dot\Delta_j$ to the second equation of \eqref{Euler-Navier-Stokes2}, we have: $$\partial_t w_j+w_j=u_j-\dot\Delta_j (w\cdot\nabla)w.$$

    Taking the scalar product in $L^2$ with $w_j$, by Lemma \ref{lemme edo}, by multiplying by $2^{-j\frac{d}{2}}$ and by taking the $\sup$ over $j\in\Z$, we obtain for any $t\in\R^+$ :
    $$\displaylines{\|w(t)\|_{\dot B_{2,\infty}^{-\frac{d}{2}}}+\|w\|_{\tilde{L}_t^1(\dot B_{2,\infty}^{-\frac{d}{2}})
    } \lesssim \|w_0\|_{\dot B_{2,\infty}^{-\frac{d}{2}}}+\|u\|_{\tilde{L}_t^1(\dot B_{2,\infty}^{-\frac{d}{2}})} \hfill\cr\hfill +\int_0^t\|(w\cdot \nabla)w\|_{\dot B_{2,\infty}^{-\frac{d}{2}}}d\tau.}$$

    Now, by product laws of Lemma \ref{Produit espace de Besov}, we have: $$\|\left((w\cdot \nabla) w\right)(t)\|_{\dot B_{2,\infty}^{-\frac{d}{2}}}\lesssim \big(\underset{\tau\in\R^+}{\sup}\|w(\tau)\|_{\dot B_{2,\infty}^{-\frac{d}{2}}}\big)\|w(t)\|_{\dot B_{2,1}^{\frac{d}{2}+1}}.$$

   We then have by a priori estimates \eqref{estimée théorème système 1} for all $t\in\R^+$ : 
    $$\displaylines{\|w(t)\|_{\dot B_{2,\infty}^{-\frac{d}{2}}}+\|w\|_{\tilde{L}_t^1(\dot B_{2,\infty}^{-\frac{d}{2}})
    }   \lesssim \|w_0\|_{\dot B_{2,\infty}^{-\frac{d}{2}}}+\|u\|_{\tilde{L}_t^1(\dot B_{2,\infty}^{-\frac{d}{2}})}+ \mathcal{Z}_0\big(\underset{\tau\in\R^+}{\sup}\|w(\tau)\|_{\dot B_{2,\infty}^{-\frac{d}{2}}}\big).}$$

    By smallness of $\mathcal{Z}_0$, we deduce: 
    \begin{equation}\label{régularité négative2}
        \|w\|_{L^\infty(\dot B_{2,\infty}^{-\frac{d}{2}})}+\|w\|_{\tilde{L}^1(\dot B_{2,\infty}^{-\frac{d}{2}})
    } \lesssim \|w_0\|_{\dot B_{2,\infty}^{-\frac{d}{2}}}+\|u\|_{\tilde{L}^1(\dot B_{2,\infty}^{-\frac{d}{2}})}.
    \end{equation}

    \item By applying the localization operator $\dot\Delta_j$ to the third equation of \eqref{Euler-Navier-Stokes2}, then taking the scalar product in $L^2$ with $u_j$, integrating by parts and by the Cauchy-Schwarz inequality, we obtain:

$$\frac{d}{dt}\|u_j\|_{L^2}^2+\|\nabla u_j\|_{L^2}^2\leq \left(\|\dot\Delta_j (u\cdot \nabla)u\|_{L^2}+\|\dot\Delta_j(\rho(w-u))\|_{L^2}\right)\|u_j\|_{L^2}.$$
    
    By Lemma \ref{lemme edo}, multiplying by $2^{-j\frac{d}{2}}$ and taking the $\sup$ over the $j\in\Z$, we obtain for all $t\in\R^+$ :
    $$\displaylines{\|u(t)\|_{\dot B_{2,\infty}^{-\frac{d}{2}}}+\|u\|_{\tilde{L}_t^1(\dot B_{2,\infty}^{2-\frac{d}{2}})}d\tau \lesssim \|u_0\|_{\dot B_{2,\infty}^{-\frac{d}{2}}}+\|(u\cdot \nabla)u\|_{\tilde{L}_t^1(\dot B_{2,\infty}^{-\frac{d}{2}})} \cr\hfill +\|\rho(w-u)\|_{\tilde{L}_t^1(\dot B_{2,\infty}^{-\frac{d}{2}})}.}$$

    Now, by product laws of Lemma \ref{Produit espace de Besov} and by \eqref{estimée théorème système 1}, we have for all $t\in\R^+$: $$\|(u\cdot \nabla)u\|_{\tilde{L}_t^1(\dot B_{2,\infty}^{-\frac{d}{2}})}\lesssim \|u\|_{\tilde{L}_t^\infty(\dot B_{2,\infty}^{-\frac{d}{2}})}\|u\|_{\tilde{L}_t^1(\dot B_{2,1}^{\frac{d}{2}+1})}\lesssim \mathcal{Z}(0)\|u\|_{\tilde{L}_t^\infty(\dot B_{2,\infty}^{-\frac{d}{2}})},$$
and $$\|\rho (w-u)\|_{L_t^1(\dot B_{2,\infty}^{-\frac{d}{2}})}\lesssim \|\rho\|_{\tilde{L}_t^\infty(\dot B_{2,1}^{\frac{d}{2}})}\|w-u\|_{\tilde{L}_t^1(\dot B_{2,\infty}^{-\frac{d}{2}})} \lesssim \mathcal{Z}(0) \ \|w-u\|_{\tilde{L}_t^1(\dot B_{2,\infty}^{-\frac{d}{2}})}.$$

    By smallness of $\mathcal{Z}_0$, we deduce: 
    \begin{multline}\label{régularité négative3}
        \|u\|_{\tilde{L}_t^\infty(\dot B_{2,\infty}^{-\frac{d}{2}})}+\|u\|_{\tilde{L}_t^1(\dot B_{2,\infty}^{-\frac{d}{2}})}d\tau\lesssim \|u_0\|_{\dot B_{2,\infty}^{-\frac{d}{2}}}+\mathcal{Z}_0 \ \|w-u\|_{\tilde{L}_t^1(\dot B_{2,\infty}^{-\frac{d}{2}})}.
    \end{multline}

\item Applying the localization operator $\dot \Delta_j$ to the equation verified by $w-u$, we obtain:
$$\displaylines{\partial_t (w_j-u_j)+w_j-u_j=-\dot\Delta_j (w\cdot \nabla)w+\dot\Delta_j(u\cdot \nabla)u-\Delta u_j -\dot\Delta_j (\rho(w-u)).}$$

Taking the scalar product in $L^2$ with $w_j-u_j$, by Lemma \ref{lemme edo}, then multiplying by $2^{-j\frac{d}{2}}$ and taking the $sup$ over the $j\in\Z$ and by the product laws of Lemma \ref{Produit espace de Besov}, we obtain for any $t\in\R^+$ :
$$\displaylines{\|(w-u)(t)\|_{\dot B_{2,\infty}^{-\frac{d}{2}}}+\|w-u\|_{\tilde{L}_t^1(\dot B_{2,\infty}^{-\frac{d}{2}})} \hfill\cr \lesssim \|w_0-u_0\|_{\dot B_{2,\infty}^{-\frac{d}{2}}}+\|w\|_{\tilde{L}_t^\infty(\dot B_{2,\infty}^{-\frac{d}{2}})}\|w\|_{\tilde{L}_t^1(\dot B_{2,1}^{\frac{d}{2}+1})}+\|u\|_{\tilde{L}_t^\infty(\dot B_{2,\infty}^{-\frac{d}{2}})}\|u\|_{\tilde{L}_t^1(\dot B_{2,1}^{\frac{d}{2}+1})} \hfill\cr\hfill+\|u\|_{\tilde{L}_t^1(\dot B_{2,1}^{\frac{d}{2}+1})} +\|\rho\|_{\tilde{L}_t^\infty(\dot B_{2,1}^{\frac{d}{2}})}\|w-u\|_{\tilde{L}_t^1(\dot B_{2,\infty}^{-\frac{d}{2}})}.}$$

By \eqref{estimée théorème système 1} and  smallness of $\mathcal{Z}_0$, we then have the following a priori estimate for all $t\in\R^+$: 
 \begin{multline}\label{régularité négative4}
\|(w-u)(t)\|_{\dot B_{2,\infty}^{-\frac{d}{2}}}+\|w-u\|_{\tilde{L}_t^1(\dot B_{2,\infty}^{-\frac{d}{2}})} \\ \lesssim \|(w_0,u_0)\|_{\dot B_{2,\infty}^{-\frac{d}{2}}}+\mathcal{Z}(0)\|w\|_{\tilde{L}_t^\infty(\dot B_{2,\infty}^{-\frac{d}{2}})}  +\mathcal{Z}(0)\|u\|_{\tilde{L}_t^\infty(\dot B_{2,\infty}^{-\frac{d}{2}})} \\ +\|u\|_{\tilde{L}_t^1(\dot B_{2,1}^{\frac{d}{2}+1})}.
 \end{multline}
    \end{enumerate}

    By summing up inequalities \eqref{régularité négative2}, \eqref{régularité négative3} and \eqref{régularité négative4}
    and taking account of the smallness of $\mathcal{Z}_0$, we deduce: $$\underset{t\in\R^+}{\sup}\left(\|(w,u)(t)\|_{\dot B_{2,\infty}^{-\frac{d}{2}}}\right)\lesssim \|(w_0,u_0)\|_{\dot B_{2,\infty}^{-\frac{d}{2}}}.$$
    Whence the Proposition.
\end{proof}

\subsubsection{Decay estimates on velocities}~\\
Being monotonous, the functional $\mathcal{L}$ defined in \eqref{définition de L} is almost everywhere differentiable on $\R^+$ and the inequality \eqref{inégalité fonctionnelle} implies thus: \begin{equation}\label{inégalité différentielle}\frac{d}{dt}\mathcal{L}+\mathcal{H}\leq 0 \quad \text{a.e.  on} \ \R^+. \end{equation}

By interpolation inequality, we have: $$\|u\|_{\dot B_{2,1}^{\frac{d}{2}-1}}^l \lesssim \left(\|u\|_{\dot B_{2,1}^{\frac{d}{2}+1}}^l\right)^{1-\theta_0}\left(\|u\|_{\dot B_{2,\infty}^{-\frac{d}{2}}}^l\right)^{\theta_0} \quad \text{with} \ \theta_0=\frac{2}{1+d}.$$

So we have by Proposition \ref{propagation régularité négative} : 
$$\|u(t)\|_{\dot B_{2,1}^{\frac{d}{2}+1}}^l\gtrsim \left(\|u(t)\|_{\dot B_{2,1}^{\frac{d}{2}-1}}^l\right)^{\frac{1}{1-\theta_0}}\left(\|(w_0,u_0)\|_{\dot B_{2,\infty}^{-\frac{d}{2}}}\right)^{-\frac{\theta_0}{1-\theta_0}}.$$

By a priori estimates \eqref{estimée théorème} and condition \eqref{condition initiale}, we have: $$\displaylines{\|u(t)\|_{\dot B_{2,1}^{\frac{d}{2}-1}}^h  +\|w(t)\|_{\dot B_{2,1}^{\frac{d}{2}+1}}+\|(w-u)(t)\|_{\dot B_{2,1}^{\frac{d}{2}-1}} \hfill\cr\hfill \leq  \|u(t)\|_{\dot B_{2,1}^{\frac{d}{2}+1}}^h+\|w(t)\|_{\dot B_{2,1}^{\frac{d}{2}+1}}+\|(w-u)(t)\|_{\dot B_{2,1}^{\frac{d}{2}-1}} \cr\hfill \lesssim 1.}$$
Since also $\frac{1}{1-\theta_0}\geq 1$, we deduce that:
$$\displaylines{\|u(t)\|_{\dot B_{2,1}^{\frac{d}{2}+1}}^h+\|w(t)\|_{\dot B_{2,1}^{\frac{d}{2}+1}}+\|(w-u)(t)\|_{\dot B_{2,1}^{\frac{d}{2}-1}}\hfill\cr\hfill\geq \left(\|u(t)\|_{\dot B_{2,1}^{\frac{d}{2}-1}}^h+\|w(t)\|_{\dot B_{2,1}^{\frac{d}{2}+1}}+\|(w-u)(t)\|_{\dot B_{2,1}^{\frac{d}{2}-1}}\right)^{\frac{1}{1-\theta_0}}} $$

Thus, there exists $c>0$ small enough such that: \begin{equation}\label{inégalité H}\mathcal{H}\geq c_0 \mathcal{L}^{1-\theta_0} \quad \text{with} \ c_0\mathrel{\mathop:}=c\left(\|(w_0,u_0)\|_{\dot B_{2,\infty}^{-\frac{d}{2}}}+1\right)^{-\frac{\theta_0}{1-\theta_0}}.\end{equation}

By the inequality \eqref{inégalité fonctionnelle} and the previous inequality, we obtain: $$\frac{d}{dt}\mathcal{L}+c_0 \mathcal{L}^{\frac{1}{1-\theta_0}}\leq 0.$$

Whence the decay estimate \eqref{estimée de décroissance besov}.

In order to obtain \eqref{taux de décroissance L2}, we interpolate between $\dot B_{2,1}^{\frac{d}{2}-1}$ and $\dot B_{2,\infty}^{-\frac{d}{2}}$ : $$\|u\|_{L^2}\simeq \|u\|_{\dot B_{2,2}^0}\leq \|u\|_{\dot B_{2,1}^{\frac{d}{2}-1}}^{1-\theta}\|u\|_{B_{2,\infty}^{-\frac{d}{2}}}^\theta, \quad  \text{with} \ \theta=\frac{d-2}{2(d-1)}.$$

We then have owing to \eqref{estimée de décroissance besov} and Proposition \ref{propagation régularité négative}, $$\|u(t)\|_{L^2}\lesssim \|(w_0,u_0)\|_{\dot B_{2,\infty}^{-\frac{d}{2}}}^\theta \mathcal{Z}_0^{1-\theta}(1+c_0 t)^{-\frac{d}{4}}.$$

The same applies to $w$. This gives \eqref{taux de décroissance L2}.

\subsubsection{Improvement of estimates on the difference of velocities}~\\
In the case of $d\geq 3$, let us prove \eqref{estimée décroissance besov u-v 3d}, we can then interpolate, as before, to get \eqref{taux de décroissance L2 mode amorti}. 

Let us distinguish between high and low frequency behavior.

\begin{lemma}\label{estimée décroissance u,v hf}
    We have the following inequality for $d\geq 3$, for all $t\in\R^+$ :
    $$\displaylines{\underset{t\geq 0}{\sup}\left((1+c_0 t)^{\frac{d+1}{2}}\|(w,u)(t)\|_{\dot B_{2,1}^{\frac{d}{2}-1}}^h\right) \hfill\cr \lesssim \mathcal{Z}(0)+\mathcal{Z}(0) \ \underset{t\geq 0}{\sup}\left((1+c_0 t)^{\frac{d+1}{2}}\|(w-u)(t)\|_{\dot B_{2,1}^{\frac{d}{2}-1}}^l\right).}$$
\end{lemma}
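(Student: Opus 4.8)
The plan is to work separately with the high-frequency parts of the estimates on $w$ and $u$ already derived in the well-posedness section, but this time keeping track of the time weight $(1+c_0t)^{\frac{d+1}{2}}$. First I would revisit the proof of Lemma~\ref{lemme estimée sur v} and Lemma~\ref{estimée sur u lemme}: applying $\dot\Delta_j$ to the equations for $u$ and $w$, taking the $L^2$ scalar product, and using Bernstein's inequality \eqref{constante de Bernstein} to turn $\|\nabla u_j\|_{L^2}^2$ into $c_B^{-1}2^{2j}\|u_j\|_{L^2}^2$, one gets at high frequencies (i.e.\ $j\geq 0$, where $2^{2j}\geq 1$ so that $2^{2j}\geq c$ for both equations since $w$ is exponentially damped) a differential inequality of the form
$$\frac{d}{dt}\|(w,u)_j\|_{L^2}+\kappa\|(w,u)_j\|_{L^2}\le \|(\text{source})_j\|_{L^2},\qquad \kappa>0,$$
where the source collects the nonlinear convective terms, the commutators, the coupling term $\rho(w-u)$, and, crucially for $u$, the term $u_j$ coming from the coupling $\rho(w-u)$ (after using $\dot B^{d/2}_{2,1}\hookrightarrow L^\infty$ to absorb $\rho$). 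The damping $\kappa$ on the high-frequency block of $w$ comes from the $+w$ term in the equation, and on $u$ from the Laplacian via Bernstein, so both decay like $e^{-\kappa(t-s)}$.

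The next step is to multiply the resulting integral inequality by $(1+c_0 t)^{\frac{d+1}{2}}$, sum over $j\geq 0$ against the weight $2^{j(\frac{d}{2}-1)}$, and estimate each contribution to the right-hand side. For the quadratic nonlinear terms $(w\cdot\nabla)w$, $(u\cdot\nabla)u$, commutators, and $\rho(w-u)$, I would use the product laws of Lemma~\ref{Produit espace de Besov} to bound them by $\mathcal{Z}(0)$ times a norm that, by the already-established decay estimate \eqref{estimée de décroissance besov}, decays like $(1+c_0 t)^{\frac{1-d}{2}}$; since $\mathcal{Z}(0)$ is small and the time weight exactly compensates this decay (the exponent $\frac{d+1}{2}-\frac{d-1}{2}=1$, integrable against the exponential damping factor $e^{-\kappa(t-s)}$ only after a convolution-type lemma), these terms are controlled by $\mathcal{Z}(0)$ times the quantity being estimated, hence absorbed. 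The genuinely new term is the low-frequency part of $w-u$ feeding into the high-frequency estimate through the term $u_j$ (equivalently $\|u\|^l_{\dot B^{\cdot}}$): this produces precisely the term $\mathcal{Z}(0)\,\sup_{t}(1+c_0t)^{\frac{d+1}{2}}\|(w-u)(t)\|^l_{\dot B^{d/2-1}_{2,1}}$ on the right-hand side of the claimed inequality. One must also handle the weight itself: when differentiating $(1+c_0t)^{\frac{d+1}{2}}\|(w,u)_j\|_{L^2}$, an extra term $c_0\frac{d+1}{2}(1+c_0t)^{\frac{d-1}{2}}\|(w,u)_j\|_{L^2}$ appears, which is again absorbed by the damping $\kappa\gg c_0$ at high frequencies (here $c_0$ is chosen small, and $\kappa\sim 1$ for $w$, $\kappa\sim 2^{2j}\gtrsim 1$ for $u$).

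The main obstacle I anticipate is the bookkeeping needed to decouple the high-frequency bound for $u$ from its own low frequencies: the coupling term $\rho(w-u)$ injects \emph{all} frequencies of $w-u$ into the $u$ equation, and in particular its low-frequency part, which is not exponentially damped and is only known to decay at the slower polynomial rate. The key point is that this low-frequency contribution enters with the small factor $\mathcal{Z}(0)$ (from $\|\rho\|_{\dot B^{d/2}_{2,1}}$), so it cannot be absorbed but must instead be \emph{carried along} as the second term on the right-hand side of the lemma — which is exactly why the statement is formulated as a bound involving $\sup_t(1+c_0t)^{\frac{d+1}{2}}\|(w-u)(t)\|^l_{\dot B^{d/2-1}_{2,1}}$ rather than a closed estimate. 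A companion lemma bounding the low-frequency damped mode will then close the loop. A secondary technical point is the use of a Grönwall/convolution lemma adapted to time-weighted norms (a weighted version of Lemma~\ref{lemme edo}) to convert the pointwise-in-frequency differential inequalities with exponential damping into the stated supremum bound; I would state and use such a lemma with the weight $(1+c_0t)^{\frac{d+1}{2}}$, checking that $\int_0^t e^{-\kappa(t-s)}(1+c_0 s)^{-\frac{d+1}{2}}\,ds \lesssim (1+c_0t)^{-\frac{d+1}{2}}$ since $\kappa$ is bounded below away from $0$ on the high-frequency range.
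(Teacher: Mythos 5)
Your overall strategy is the paper's: exploit the exponential damping of the high-frequency blocks ($e^{-t}$ for $w$ from the friction term, $e^{-c2^{2j}t}\le e^{-ct}$ for $u$ from the Laplacian when $j\ge 0$), multiply by $(1+c_0t)^{\frac{d+1}{2}}$, use the already established decay \eqref{estimée de décroissance besov} together with the a priori bounds, and carry $\sup_t(1+c_0t)^{\frac{d+1}{2}}\|(w-u)\|^l_{\dot B^{d/2-1}_{2,1}}$ on the right with the small prefactor coming from $\|\rho\|_{\dot B^{d/2}_{2,1}}$. That skeleton is correct.

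However, your treatment of the quadratic terms has a genuine gap, and it is precisely where the real work of the lemma lies. You propose to bound $(w\cdot\nabla)w$, $(u\cdot\nabla)u$, the commutators and $\rho(w-u)$ uniformly by ``$\mathcal{Z}(0)$ times a norm decaying like $(1+c_0t)^{\frac{1-d}{2}}$'' and then absorb them, on the grounds that the exponent $\frac{d+1}{2}-\frac{d-1}{2}=1$ is handled by the convolution lemma. This accounting fails: after using the decay of only one factor, the weighted convolution $\int_0^t e^{-\kappa(t-\tau)}\bigl(\tfrac{1+c_0t}{1+c_0\tau}\bigr)^{\frac{d+1}{2}}(1+c_0\tau)\,d\tau$ grows like $(1+c_0t)$, so nothing is gained; you need decay from \emph{both} factors, i.e.\ a total rate $d-1\ge\frac{d+1}{2}$, which holds exactly when $d\ge 3$ — this is the only place the hypothesis $d\ge3$ enters, and your proposal never identifies it (the paper flags it explicitly as the step $(1+c_0t)\le(1+c_0t)^{\frac{d-1}{2}}$ that breaks down for $d=2$). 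Worse, for $(u\cdot\nabla)u$ the scheme cannot even start in the form you describe: the product law gives $\|u\|_{\dot B^{d/2-1}_{2,1}}\|u\|_{\dot B^{d/2+1}_{2,1}}$, where the second factor has no pointwise-in-time bound or decay (only $L^1_t$ control), and the full first factor decays only at the rate $\frac{d-1}{2}$ (its low frequencies cannot do better), so neither ``smallness of the other factor'' nor absorption into the high-frequency quantity being estimated is available, and a split-integral argument with the $L^1_t$ norm still only yields the rate $\frac{d-1}{2}$. The missing idea is the decomposition $(u\cdot\nabla)u=((u-w)\cdot\nabla)u+(w\cdot\nabla)u$ (and similarly in the commutator step): the first piece produces $\|w-u\|_{\dot B^{d/2-1}_{2,1}}\|u\|_{\dot B^{d/2+1}_{2,1}}$, handled by splitting the time integral at $t/2$ and carrying the weighted sup of $w-u$ to the right-hand side, while the second gives $\|w\|_{\dot B^{d/2+1}_{2,1}}\|u\|_{\dot B^{d/2-1}_{2,1}}$, where both factors do decay at rate $\frac{d-1}{2}$ so that the total rate $d-1$ beats the weight for $d\ge3$. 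With that decomposition (and the observation that the full $\|w-u\|_{\dot B^{d/2-1}_{2,1}}$ appearing on the right can be reduced to its low-frequency part by absorbing $\|w\|^h+\|u\|^h$ thanks to the small factor $\mathcal{Z}(0)$), your argument matches the paper's proof; without it, the key step fails.
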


\begin{proof}
First, let us look at the equation on $u$ :
$$\partial_t u-\Delta u=-\mathbb{P}(u\cdot \nabla)u+\mathbb{P}\rho (w-u).$$

By applying the operator $\dot\Delta_j$ for $j\geq 0$ and applying the scalar product with $u_j$, we obtain:
$$\displaylines{\frac{1}{2}\frac{d}{dt}\|u_j\|_{L^2}+\|\nabla u_j\|_{L^2}^2=-\int_{\R^d}\dot\Delta_j\mathbb{P}\left((u-w)\cdot \nabla)u\right)\cdot u_j dx \hfill\cr\hfill -\int_{\R^d}\dot\Delta_j\mathbb{P}(w\cdot\nabla)u\cdot u_j dx  +\int_{\R^d}\dot\Delta_j\mathbb{P}\rho (w-u)\cdot v_j dx.}$$

Now, by continuity of $\mathbb{P}$, by the Cauchy-Schwarz inequality and by the product laws of Lemma \ref{Produit espace de Besov}, we have the existence of a sequence $(c_j)_{j\geq 0}$ such that $\sum_{j\geq 0}c_j=1$ satisfying : \begin{align*} \bigg|  - \int_{\R^d}\dot\Delta_j\mathbb{P}\left((u-w)\cdot \nabla)u\right)\cdot u_j & dx\bigg| \\ & \leq \|\dot\Delta_j \left((u-w)\cdot \nabla\right) u\|_{L^2} \|u_j\|_{L^2}
\\ & \leq  C c_j 2^{-j\left(\frac{d}{2}-1\right)}\|w-u\|_{\dot B_{2,1}^{\frac{d}{2}-1}}\|u\|_{\dot B_{2,1}^{\frac{d}{2}+1}}\|u_j\|_{L^2}. \end{align*}

Furthermore, by symmetry of $\mathbb{P}$ and integration by parts, we obtain: 
\begin{align*}
    \int_{\R^d}\dot\Delta_j\mathbb{P}\left((w\cdot\nabla)u\right)\cdot u_j dx & = \int_{\R^d}(w\cdot\nabla)u_j\cdot u_j dx+\int_{\R^d}[\dot\Delta_j,w\cdot\nabla]u\cdot u_j dx 
    \\ & =  -\frac{1}{2} \int_{\R^d}(\dive w)|u_j|^2 dx+\int_{\R^d}[\dot\Delta_j,w\cdot\nabla]u\cdot u_j dx. 
\end{align*}

By Hölder's inequality and the commutator estimates of Lemma \ref{commutateur}, there exists a sequence $(c_j)_{j\geq 0}$ such that $\sum_{j\geq 0}c_j=1$ and
$$\displaylines{\left|\int_{\R^d}\dot\Delta_j \mathbb{P}\left((w\cdot\nabla)u\right)\cdot u_j dx\right| \leq C c_j 2^{-j\left(\frac{d}{2}-1\right)} \|w\|_{\dot B_{2,1}^{\frac{d}{2}+1}}\|u\|_{\dot B_{2,1}^{\frac{d}{2}-1}}\|u_j\|_{L^2} .}$$

We have also: $$\left|\int_{\R^d}\mathbb{P}\rho(w-u)\cdot u_j dx\right| \leq C c_j 2^{-j\left(\frac{d}{2}-1\right)}\|\rho\|_{\dot B_{2,1}^{\frac{d}{2}}}\|w-u\|_{\dot B_{2,1}^{\frac{d}{2}-1}}\|u_j\|_{L^2}.$$

We deduce the following inequality: 
$$\displaylines{\frac{1}{2}\frac{d}{dt}\|u_j\|_{L^2}^2+2^{2j}\|u_j\|_{L^2}^2\lesssim c_j 2^{-j\left(\frac{d}{2}-1\right)}\bigg(\|w-u\|_{\dot B_{2,1}^{\frac{d}{2}-1}}\|u\|_{\dot B_{2,1}^{\frac{d}{2}+1}} \cr\hfill +\|w\|_{\dot B_{2,1}^{\frac{d}{2}+1}}\|u\|_{\dot B_{2,1}^{\frac{d}{2}-1}} +\|\rho\|_{\dot B_{2,1}^{\frac{d}{2}}}\|w-u\|_{\dot B_{2,1}^{\frac{d}{2}-1}}\bigg)\|u_j\|_{L^2}.}$$

We then have for all $j\geq 0$ : $$\displaylines{\|u_j(t)\|_{L^2}\leq C e^{-c_0 2^{2j}t}\|u_{j,0}\|_{L^2}  \hfill\cr\hfill+C c_j 2^{-j\left(\frac{d}{2}-1\right)} \int_0^t e^{-c_0 2^{2j}(t-\tau)}\bigg(\|w-u\|_{\dot B_{2,1}^{\frac{d}{2}-1}}\|u\|_{\dot B_{2,1}^{\frac{d}{2}+1}} \cr\hfill +\|w\|_{\dot B_{2,1}^{\frac{d}{2}+1}}\|u\|_{\dot B_{2,1}^{\frac{d}{2}-1}} +\|\rho\|_{\dot B_{2,1}^{\frac{d}{2}}}\|w-u\|_{\dot B_{2,1}^{\frac{d}{2}-1}}\bigg)d\tau.}$$

Noting that $e^{-c_0 2^{2j}t}\leq e^{-c_0 t}$ for $j\geq 0$, multiplying by $\displaystyle 2^{j\left(\frac{d}{2}-1\right)}$ and summing over the $j\geq 0$, we obtain by multiplying by $(1+c_0 t)^{\frac{d}{2}+1}$: 
$$\displaylines{(1+c_0 t)^{\frac{d}{2}+1}\|u(t)\|_{\dot B_{2,1}^{\frac{d}{2}-1}}^h\leq C (1+c_0 t)^{\frac{d}{2}+1} e^{-c_0 t}\|u_{0}\|_{\dot B_{2,1}^{\frac{d}{2}-1}} \hfill\cr\hfill +C \int_0^t (1+c_0 t)^{\frac{d}{2}+1} e^{-c_0(t-\tau)}\bigg(\|w-u\|_{\dot B_{2,1}^{\frac{d}{2}-1}}\|u\|_{\dot B_{2,1}^{\frac{d}{2}+1}} \cr\hfill+\|w\|_{\dot B_{2,1}^{\frac{d}{2}+1}}\|u\|_{\dot B_{2,1}^{\frac{d}{2}-1}} +\|\rho\|_{\dot B_{2,1}^{\frac{d}{2}}}\|w-u\|_{\dot B_{2,1}^{\frac{d}{2}-1}}\bigg)d\tau.}$$

In order to estimate the terms in the right-hand side of the previous inequality, one can split the integral into two parts as follows:
$$\displaylines{(1+c_0 t)^{\frac{d+1}{2}}\int_0^t e^{-c_0(t-\tau)}\|w-u\|_{\dot B_{2,1}^{\frac{d}{2}-1}}\|u\|_{\dot B_{2,1}^{\frac{d}{2}+1}}d\tau
\hfill\cr\hfill  =(1+c_0 t)^{\frac{d+1}{2}}\int_0^{t/2}e^{-c_0(t-\tau)}\|w-u\|_{\dot B_{2,1}^{\frac{d}{2}-1}}\|u\|_{\dot B_{2,1}^{\frac{d}{2}+1}}d\tau \hfill\cr\hfill+\int_{t/2}^t \left(\frac{1+c_0 t}{1+c_0 \tau}\right)^{\frac{d+1}{2}}e^{-c_0(t-\tau)}(1+c_0 \tau)^{\frac{d+1}{2}}\|w-u\|_{\dot B_{2,1}^{\frac{d}{2}-1}}\|u\|_{\dot B_{2,1}^{\frac{d}{2}+1}}d\tau.}$$

By inequality \eqref{estimée théorème système 1}, we deduce: 
$$\displaylines{(1+c_0 t)^{\frac{d+1}{2}}\int_0^{t/2}e^{-c_0(t-\tau)}\|w-u\|_{\dot B_{2,1}^{\frac{d}{2}-1}}\|u\|_{\dot B_{2,1}^{\frac{d}{2}+1}}d\tau \hfill\cr\hfill \lesssim(1+c_0 t)^{\frac{d+1}{2}}e^{-c_0 t/2}\int_0^{t/2} \|w-u\|_{\dot B_{2,1}^{\frac{d}{2}-1}}\|u\|_{\dot B_{2,1}^{\frac{d}{2}+1}}d\tau \cr\hfill  \lesssim\left(\underset{t\geq 0}{\sup}\|(w-u)(t)\|_{\dot B_{2,1}^{\frac{d}{2}-1}}\right)\int_0^t \|u\|_{\dot B_{2,1}^{\frac{d}{2}+1}}d\tau\cr\hfill \lesssim \mathcal{Z}(0))^2,}$$ and $$\displaylines{ \int_{\frac{t}{2}}^t e^{-c_0(t-\tau)}(1+c_0 \tau)^{\frac{d+1}{2}}\|w-u\|_{\dot B_{2,1}^{\frac{d}{2}-1}}\|u\|_{\dot B_{2,1}^{\frac{d}{2}+1}}d\tau
\hfill\cr \lesssim\left(\underset{t\geq 0}{\sup}(1+c_0 t)^{\frac{d+1}{2}}\|w-u\|_{\dot B_{2,1}^{\frac{d}{2}-1}}\right)\int_{t/2}^t \|u\|_{\dot B_{2,1}^{\frac{d}{2}+1}}d\tau
\hfill\cr \lesssim\mathcal{Z}(0)\left(\underset{t\geq 0}{\sup}(1+c_0 t)^{\frac{d+1}{2}}\|w-u\|_{\dot B_{2,1}^{\frac{d}{2}-1}}\right).\hfill
}$$

We have also by Lemma \ref{int exp}:
$$\displaylines{(1+c_0 t)^{\frac{d+1}{2}}\int_0^t e^{-c_0 (t-\tau)}\|\rho\|_{\dot B_{2,1}^{\frac{d}{2}}}\|w-u\|_{\dot B_{2,1}^{\frac{d}{2}-1}} d\tau
\hfill\cr
=\int_0^t e^{-c_0(t-\tau)}\left(\frac{1+c_0 t}{1+c_0 \tau}\right)^{\frac{d+1}{2}}(1+c_0 \tau)^{\frac{d+1}{2}}\|\rho\|_{\dot B_{2,1}^{\frac{d}{2}}}\|w-u\|_{\dot B_{2,1}^{\frac{d}{2}-1}}d\tau
\hfill\cr \lesssim \left(\underset{t\geq 0}{\sup}(1+c_0 t)^{\frac{d+1}{2}}\|w-u\|_{\dot B_{2,1}^{\frac{d}{2}-1}}\right) \left(\underset{t\geq 0}{\sup}\|\rho(t)\|_{\dot B_{2,1}^{\frac{d}{2}}}\right)
\hfill\cr ~~~~~\times\int_0^t e^{-c_0(t-\tau)}\left(\frac{1+c_0 t}{1+c_0 \tau}\right)^{\frac{d+1}{2}}d\tau \hfill\cr \lesssim\mathcal{Z}(0)\left(\underset{t\geq 0}{\sup}(1+c_0 t)^{\frac{d+1}{2}}\|w-u\|_{\dot B_{2,1}^{\frac{d}{2}-1}}\right).\hfill } $$

If we take $$A(t)\mathrel{\mathop:=(1+c_0 t)^{\frac{d+1}{2}}\int_0^t e^{-c_0(t-\tau)}\|u\|_{\dot B_{2,1}^{\frac{d}{2}-1}}\|w\|_{\dot B_{2,1}^{\frac{d}{2}+1}} d\tau},$$ we have also
$$\displaylines{A(t)\lesssim \int_0^t e^{-c_0 (t-\tau)}\left(\frac{1+c_0 t}{1+c_0 \tau}\right)^{\frac{d+1}{2}}(1+c_0 \tau)^{\frac{d-1}{2}} \|u\|_{\dot B_{2,1}^{\frac{d}{2}-1}}(1+c_0 \tau)\|w\|_{\dot B_{2,1}^{\frac{d}{2}+1}} d\tau,}$$
and we can deduce from the inequality \eqref{estimée de décroissance besov} (since $d\geq 3$) : 

\begin{multline}\label{cas 1} A(t) \lesssim \underset{t\geq 0}{\sup}\left((1+c_0 t)^{\frac{d-1}{2}}\|u(t)\|_{\dot B_{2,1}^{\frac{d}{2}-1}}\right) \\ ~~~~~~~~~~~~~~~~~~~~~~~\times\underset{t\geq 0}{\sup}\left((1+c_0 t)^{\frac{d-1}{2}}\|w(t)\|_{\dot B_{2,1}^{\frac{d}{2}+1}}\right) \int_0^t e^{-c_0(t-\tau)}\left(\frac{1+c_0 t}{1+c_0 \tau}\right)^{\frac{d+1}{2}}d\tau
\\  \lesssim \mathcal{Z}(0).
\end{multline}

We can therefore deduce: 
\begin{multline}\label{estimée décroissance v hf}(1+c_0 t)^{\frac{d}{2}+1}\|u(t)\|_{\dot B_{2,1}^{\frac{d}{2}-1}}^h\lesssim \mathcal{Z}(0) \\ +\mathcal{Z}(0)\left(\underset{t\geq 0}{\sup}(1+c_0 t)^{\frac{d+1}{2}}\|w-u\|_{\dot B_{2,1}^{\frac{d}{2}-1}}\right).\end{multline}

By applying the localisation operator $\dot\Delta_j$ to the equation on $w$ of system \eqref{Euler-Navier-Stokes2}, we have by Duhamel's formula: 
$$w_j(t)=e^{-t}w_{j,0}+\int_0^t e^{-(t-\tau)}\left(u_j(\tau)-\dot\Delta_j(w\cdot\nabla)w(\tau)\right)d\tau.$$

By setting $$B(t)\mathrel{\mathop:}=(1+c_0 t)^{\frac{d+1}{2}}\|w(t)\|_{\dot B_{2,1}^{\frac{d}{2}-1}}^h,$$
we have for $d\geq 3$:
$$\displaylines{B(t) \lesssim \|w_0\|_{\dot B_{2,1}^{\frac{d}{2}-1}}^h +\int_0^t \left(\frac{1+c_0 t}{1+c_0 \tau}\right)^{\frac{d+1}{2}}e^{-(t-\tau)}(1+c_0 \tau)^{\frac{d+1}{2}}\|u(\tau)\|_{\dot B_{2,1}^{\frac{d}{2}-1}}^hd\tau \hfill\cr\hfill +\int_0^t \left(\frac{1+c_0 t}{1+c_0 \tau}\right)^{\frac{d+1}{2}}e^{-(t-\tau)}(1+c_0 \tau)^{\frac{d-1}{2}}(1+c_0 \tau)\|(w\cdot\nabla)w\|_{\dot B_{2,1}^{\frac{d}{2}-1}}d\tau.}$$

By Lemma \ref{int exp} and inequality \eqref{estimée théorème système 1}, we have
$$\displaylines{B(t)  \lesssim \|w_0\|_{\dot B_{2,1}^{\frac{d}{2}-1}}^h+\left(\underset{t\geq 0}{\sup}(1+c_0 t)^{\frac{d+1}{2}}\|u(\tau)\|_{\dot B_{2,1}^{\frac{d}{2}-1}}^h\right) \hfill 
\cr + \left(\underset{t\geq 0}{\sup}(1+c_0 t)^{\frac{d-1}{2}}\|w\|_{\dot B_{2,1}^{\frac{d}{2}-1}}\right)\left(\underset{t\geq 0}{\sup}(1+c_0 t)^{\frac{d-1}{2}}\|w\|_{\dot B_{2,1}^{\frac{d}{2}+1}}\right) \cr\hfill \times \int_0^t \left(\frac{1+c_0 t}{1+c_0 \tau}\right)^{\frac{d+1}{2}} e^{-(t-\tau)} d\tau
\cr\hfill\lesssim\mathcal{Z}(0)+\left(\underset{t\geq 0}{\sup}(1+c_0 t)^{\frac{d+1}{2}}\|u(\tau)\|_{\dot B_{2,1}^{\frac{d}{2}-1}}^h\right).\hfill
}$$
So we have the inequality:
\begin{equation}\label{estimée décroissance u hf}
    \underset{t\geq 0}{\sup}\left((1+c_0 t)^{\frac{d+1}{2}}\|w(t)\|_{\dot B_{2,1}^{\frac{d}{2}-1}}^h\right)\lesssim\mathcal{Z}(0)+\left(\underset{t\geq 0}{\sup}(1+c_0 t)^{\frac{d+1}{2}}\|u(\tau)\|_{\dot B_{2,1}^{\frac{d}{2}-1}}^h\right).
\end{equation}

By combining inequalities \eqref{estimée décroissance v hf} and \eqref{estimée décroissance u hf}, we obtain the result of the lemma.
\end{proof}

For low frequencies, we have the following lemma:
\begin{lemma}\label{lemme dimension 3}
    We have the following estimate at low frequencies for $d\geq 3$ : 
    \begin{multline}\label{estimée décroissance u-v bf 3d}\underset{t\geq 0}{\sup}\left((1+c_0 t)^{\frac{d+1}{2}}\|w-u\|_{\dot B_{2,1}^{\frac{d}{2}-1}}^l\right)\lesssim \mathcal{Z}_0 \\  + \mathcal{Z}_0 \ \underset{t\geq 0}{\sup}\left((1+c_0 t)^{\frac{d+1}{2}}\|w-u\|_{\dot B_{2,1}^{\frac{d}{2}-1}}^h\right).\end{multline}
\end{lemma}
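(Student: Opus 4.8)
The plan is to run a Duhamel argument on the low-frequency blocks of the damped mode, the extra power of $t$ (compared with the generic rate of~\eqref{estimée de décroissance besov}) coming from the parabolic smoothing hidden in the source term $-\Delta u$. Recall from the proof of Lemma~\ref{lemme estimée sur w-u} that
$$\partial_t(w-u)+(w-u)=-\Delta u-\mathbb{P}\big(\rho(w-u)\big)+\mathbb{P}(u\cdot\nabla)u-(w\cdot\nabla)w.$$
Applying $\dot\Delta_j$ for $j\leq-1$, integrating the resulting ODE which is damped at rate $1$, taking the $L^2$ norm, multiplying by $2^{j(\frac d2-1)}$ and summing over $j\leq-1$, one gets (using $\|\Delta u\|_{\dot B_{2,1}^{\frac d2-1}}^{l}\lesssim\|u\|_{\dot B_{2,1}^{\frac d2+1}}^{l}$ and writing $\mathcal N:=(w\cdot\nabla)w-\mathbb{P}(u\cdot\nabla)u$)
$$\|(w-u)(t)\|_{\dot B_{2,1}^{\frac d2-1}}^{l}\leq e^{-t}\|w_0-u_0\|_{\dot B_{2,1}^{\frac d2-1}}^{l}+\int_0^t e^{-(t-\tau)}\Big(\|u\|_{\dot B_{2,1}^{\frac d2+1}}^{l}+\|\rho(w-u)\|_{\dot B_{2,1}^{\frac d2-1}}^{l}+\|\mathcal N\|_{\dot B_{2,1}^{\frac d2-1}}^{l}\Big)\,d\tau.$$
Multiplying by $(1+c_0t)^{\frac{d+1}2}$ and taking the supremum over $t\in[0,T]$, the free term is harmless; the task is then to bound the three integral terms, the point being that the contributions reproducing $\|w-u\|^{l}$ carry a small prefactor and can be moved back to the left, while one contribution will reproduce $\|w-u\|^{h}$, giving~\eqref{estimée décroissance u-v bf 3d} after letting $T\to+\infty$.

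The heart of the matter, which I expect to be the main obstacle, is the first integral, i.e.\ the control of $\Delta u$ at low frequency. The crude bound $\|u\|_{\dot B_{2,1}^{\frac d2+1}}^{l}\leq\|u\|_{\dot B_{2,1}^{\frac d2-1}}^{l}$ together with the rate $(1+c_0t)^{\frac{1-d}2}$ of~\eqref{estimée de décroissance besov} is insufficient, since it leaves a growing factor after the time weight. One must exploit instead the genuine heat smoothing of $u$ and propagate a \emph{refined} low-frequency decay, essentially of the form $\|u(t)\|_{\dot B_{2,1}^{\frac d2+1}}^{l}\lesssim(1+c_0t)^{-\frac{d+1}2}\mathcal Z_0$ (in any case fast enough that, after convolution with $e^{-(t-\tau)}$, the resulting term decays at rate $\frac{d+1}2$), reflecting that $\Delta u$ is $O(|\xi|^2)$ at low frequency while $u$ decays parabolically. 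This is obtained by returning to the heat-semigroup Duhamel formula for the $u$-equation of~\eqref{Euler-Navier-Stokes3}: the free part is controlled by the elementary inequality $\sum_{j\leq-1}2^{j\sigma}e^{-ct2^{2j}}\lesssim(1+t)^{-\sigma/2}$ (with $\sigma=d+1$) combined with $u_0\in\dot B_{2,\infty}^{-\frac d2}$ and Proposition~\ref{propagation régularité négative}; the quadratic term $\mathbb{P}\dive(u\otimes u)$ and the coupling term $\mathbb{P}(\rho(w-u))$ are written in divergence form, $u\otimes u$ is estimated in the sharpest admissible homogeneous Besov norm, and one convolves against the heat kernel using the rates of Theorem~\ref{estimées de décroissance précises} already established, the a priori bound $\|(w,u)\|_{L^1(\dot B_{2,1}^{\frac d2+1})}\lesssim\mathcal Z_0$, and the splitting of the $\tau$-integral at $t/2$. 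It is precisely here that working at critical regularity makes the argument delicate and "by hand". Granting the refined bound, the corresponding integral $\int_0^te^{-(t-\tau)}\|u(\tau)\|_{\dot B_{2,1}^{\frac d2+1}}^{l}\,d\tau$ (again split at $t/2$, using $\|u\|_{L^1(\dot B_{2,1}^{\frac d2+1})}\lesssim\mathcal Z_0$ on $[0,t/2]$) is $\lesssim(1+c_0t)^{-\frac{d+1}2}\mathcal Z_0$, hence $\lesssim\mathcal Z_0$ once multiplied by the weight.

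For the remaining two integrals, decompose $\mathcal N=((w-u)\cdot\nabla)w+(u\cdot\nabla)(w-u)+(Id-\mathbb{P})(u\cdot\nabla)u$ as in Lemma~\ref{lemme estimée sur w-u}; the product laws of Lemma~\ref{Produit espace de Besov} bound $\|\mathcal N\|_{\dot B_{2,1}^{\frac d2-1}}^{l}$ by quadratic expressions in $\|w-u\|_{\dot B_{2,1}^{\frac d2-1}}$, $\|u\|_{\dot B_{2,1}^{\frac d2-1}}$, $\|w\|_{\dot B_{2,1}^{\frac d2+1}}$ and $\|u\|_{\dot B_{2,1}^{\frac d2+1}}$. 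Inserting the time weights exactly as in the proof of Lemma~\ref{estimée décroissance u,v hf} — grouping a decaying supremum (given by~\eqref{estimée de décroissance besov}) against a residual polynomial factor that stays bounded only for $d\geq3$ (for the typical term $\|w-u\|_{\dot B_{2,1}^{\frac d2-1}}\|w\|_{\dot B_{2,1}^{\frac d2+1}}$ this residual factor is $(1+c_0\tau)^{\frac{3-d}2}$), and then using $\int_0^te^{-(t-\tau)}\big(\tfrac{1+c_0t}{1+c_0\tau}\big)^{\frac{d+1}2}d\tau\lesssim1$ from Lemma~\ref{int exp} — one arrives at a bound $\lesssim\mathcal Z_0^2$. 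This is where $d\geq3$ is used; in dimension $2$ the residual factor grows, which is why~\eqref{estimée décroissance besov u-v 2d} requires a separate treatment. Finally, for $\|\rho(w-u)\|_{\dot B_{2,1}^{\frac d2-1}}^{l}$ one uses $\lesssim\|\rho\|_{\dot B_{2,1}^{\frac d2}}\|w-u\|_{\dot B_{2,1}^{\frac d2-1}}$ and splits $\|w-u\|_{\dot B_{2,1}^{\frac d2-1}}=\|w-u\|_{\dot B_{2,1}^{\frac d2-1}}^{l}+\|w-u\|_{\dot B_{2,1}^{\frac d2-1}}^{h}$: since $\|\rho\|_{\dot B_{2,1}^{\frac d2}}\lesssim\mathcal Z_0$, the low-frequency piece produces $\mathcal Z_0\sup_{[0,T]}\big((1+c_0t)^{\frac{d+1}2}\|w-u\|_{\dot B_{2,1}^{\frac d2-1}}^{l}\big)$, absorbed by smallness of $\mathcal Z_0$, and the high-frequency piece produces precisely the term $\mathcal Z_0\sup_{t\geq0}\big((1+c_0t)^{\frac{d+1}2}\|w-u\|_{\dot B_{2,1}^{\frac d2-1}}^{h}\big)$ in~\eqref{estimée décroissance u-v bf 3d}. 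Collecting all the contributions, taking the supremum in time, absorbing the $\|w-u\|^{l}$ terms by the smallness of $\mathcal Z_0$ and letting $T\to+\infty$ yields~\eqref{estimée décroissance u-v bf 3d}.
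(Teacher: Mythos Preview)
Your overall strategy is close in spirit to the paper's, but the paper takes a cleaner route that avoids the step you flag as ``the main obstacle''. Instead of treating $-\Delta u$ as a source term in the scalar damped ODE for $w-u$ and then separately proving a refined pointwise bound $\|u(t)\|_{\dot B_{2,1}^{\frac d2+1}}^{l}\lesssim(1+c_0t)^{-\frac{d+1}2}$, the paper writes the \emph{coupled} $2\times 2$ linear system for $(\widehat{w-u},\widehat u)$ and applies the explicit matrix exponential of Lemma~\ref{semi-groupe}. The off-diagonal entry $g(t,|\xi|)$ already encodes the parabolic smoothing: one gets directly
\[
\|(w-u)_j(t)\|_{L^2}\lesssim e^{-t}\|(w-u)_{0,j}\|_{L^2}+t\,2^{2j}e^{-t2^{2j}}\|u_{0,j}\|_{L^2}+\int_0^t\big(e^{-(t-\tau)}\|F_{1,j}\|_{L^2}+(t-\tau)2^{2j}e^{-(t-\tau)2^{2j}}\|F_{2,j}\|_{L^2}\big)d\tau,
\]
with $F_2$ the source of the $u$-equation. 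This sidesteps any standalone control of $\|u\|_{\dot B_{2,1}^{\frac d2+1}}^{l}$; what you are trying to prove by a separate heat-Duhamel argument is exactly the content of the kernel $g$.

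Your decoupled route has two concrete weak points. First, the decomposition of $\mathcal N$ you borrow from Lemma~\ref{lemme estimée sur w-u} produces the term $(Id-\mathbb{P})(u\cdot\nabla)u$, bounded by $\|u\|_{\dot B_{2,1}^{\frac d2-1}}\|u\|_{\dot B_{2,1}^{\frac d2+1}}$; you only have the $(d-1)/2$ rate on the first factor and $L^1$-in-time control on the second, which after convolution with $e^{-(t-\tau)}$ and the $t/2$-splitting gives rate $(d-1)/2$, not $(d+1)/2$. The paper instead decomposes $\mathbb P(u\cdot\nabla)u-(w\cdot\nabla)w$ as $\mathbb P\big((u-w)\cdot\nabla\big)u+\mathbb P(w\cdot\nabla)(u-w)+(\mathbb P-Id)(w\cdot\nabla)w$ and, crucially, estimates the middle term in $\dot B_{2,1}^{\frac d2-2}$ at low frequency (valid for $d\geq 3$), which yields $\|w-u\|_{\dot B_{2,1}^{\frac d2-1}}\|w\|_{\dot B_{2,1}^{\frac d2}}$; every factor is then either bootstrapped at the $(d+1)/2$ rate or is a product of two $(d-1)/2$-decaying $w$-norms. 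Second, in your sketch for the refined $u$-decay you say the coupling term $\mathbb P(\rho(w-u))$ is ``written in divergence form'', but it is not; with only the already-known $(d-1)/2$ rate on $\|w-u\|_{\dot B_{2,1}^{\frac d2-1}}$, the heat smoothing from $\dot B_{2,1}^{\frac d2-1}$ to $\dot B_{2,1}^{\frac d2+1}$ costs $(t-\tau)^{-1}$ and does not close at $(d+1)/2$. The paper's matrix-exponential packaging handles the $\rho(w-u)$ contribution from $F_2$ together with that from $F_1$ without needing this intermediate estimate.
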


\begin{proof}
Let us consider the equations on $w-u$ and $u$:
$$\left\{\begin{array}{l} \partial_t (w-u)+w-u+\Delta u=F_1, \\ \partial_t u-\Delta u=F_2, \end{array}\right.$$ where $$F\mathrel{\mathop:}=\begin{pmatrix}
    F_1 \\ F_2
\end{pmatrix}\mathrel{\mathop:}=\begin{pmatrix}
   \mathbb{P}(u\cdot\nabla)u-(w\cdot\nabla)w-\mathbb{P}\rho(w-u) \\ -\mathbb{P}(u\cdot\nabla)u+\mathbb{P}\rho(w-u)
\end{pmatrix}.$$

By Fourier transform, we obtain: $$\frac{d}{dt}\begin{pmatrix}
    \widehat{w-u} \\ \widehat{u} 
\end{pmatrix}+\begin{pmatrix}
    1 & -|\xi|^2 \\ 0 & |\xi|^2
\end{pmatrix}\begin{pmatrix}
    \widehat{w-u} \\ \widehat{u}
\end{pmatrix}=\widehat{F}.$$

With the localisation operator $\dot\Delta_j$, the fact that $|\xi|^2\simeq 2^{2j}$ and Duhamel's formula, we obtain by Lemma \ref{semi-groupe} : 
$$\displaylines{\|(w-u)_j(t)\|_{L^2}\lesssim e^{-t}\|(w-u)_{0,j}\|_{L^2}+te^{-t 2^{2j}}2^{2j}\|u_{0,j}\|_{L^2} \hfill\cr\hfill+\int_0^t e^{-(t-\tau)}\|\dot\Delta_j\left(\mathbb{P}(u\cdot\nabla)u-(w\cdot\nabla)w-\mathbb{P}\rho(w-u)\right)\|_{L^2}d\tau \hfill\cr\hfill+\int_0^t (t-\tau)e^{-(t-\tau)2^{2j}}2^{2j}\|\dot\Delta_j\left(-\mathbb{P}(u\cdot\nabla)
u+\mathbb{P}\rho(w-u)\right)\|_{L^2} d\tau.} $$

We then have by multiplying by $(1+c_0t)^{\frac{d+1}{2}} 2^{j(\frac{d}{2}-1)}$, summing over the $j\in\Z^{-}$ and by the inequality $xe^{-x}\leq C e^{-cx}$ for all $x\in\R_+^*$ : $$\displaylines{(1+c_0 t)^{\frac{d+1}{2}}\|w-u\|_{\dot B_{2,1}^{\frac{d}{2}-1}}^l \hfill\cr \lesssim (1+c_0 t)^{\frac{d+1}{2}}e^{-t}\mathcal{Z}(0)+\int_0^t (1+c_0 t)^{\frac{d+1}{2}}e^{-(t-\tau)}\|\mathbb{P}(u\cdot\nabla)u-(w\cdot\nabla)w\|_{\dot B_{2,1}^{\frac{d}{2}-1}}^l d\tau \hfill\cr\hfill+\int_0^t (1+c_0 t)^{\frac{d+1}{2}}e^{-(t-\tau)}\|\mathbb{P}\rho(w-u)\|_{\dot B_{2,1}^{\frac{d}{2}-1}}^l d\tau \cr\hfill+\int_0^t (1+c_0 t)^{\frac{d+1}{2}}e^{-c(t-\tau)}\|\mathbb{P}(u\cdot\nabla)u\|_{\dot B_{2,1}^{\frac{d}{2}-1}}^l d\tau.}$$

Let us estimate the terms on the right one by one.
\begin{itemize}
    \item[$\bullet$] To begin with, we have: $$(1+c_0 t)^{\frac{d+1}{2}}e^{-t}\mathcal{Z}(0)\lesssim \mathcal{Z}(0).$$
    \item[$\bullet$] By product laws and \eqref{estimée théorème système 1}, we have: $$\|\mathbb{P}\rho(w-u)\|_{\dot B_{2,1}^{\frac{d}{2}-1}}\lesssim \|\rho\|_{\dot B_{2,1}^{\frac{d}{2}}}\|w-u\|_{\dot B_{2,1}^{\frac{d}{2}-1}}\lesssim \mathcal{Z}(0)\|w-u\|_{\dot B_{2,1}^{\frac{d}{2}-1}}.$$ 
So we have: $$\displaylines{\int_0^t (1+c_0 t)^{\frac{d+1}{2}}e^{-(t-\tau)}\|\mathbb{P}\rho(w-u)\|_{\dot B_{2,1}^{\frac{d}{2}-1}}^l d\tau \hfill\cr\lesssim \mathcal{Z}_0 \ \underset{t\geq 0}{\sup}\left((1+c_0 t)^{\frac{d+1}{2}}\|(w-u)(t)\|_{\dot B_{2,1}^{\frac{d}{2}-1}}\right)\int_0^t \left(\frac{1+c_0 t}{1+c_0 \tau}\right)^{\frac{d+1}{2}}e^{-(t-\tau)} d\tau\hfill\cr \lesssim\mathcal{Z}_0 \ \underset{t\geq 0}{\sup}\left((1+c_0 t)^{\frac{d+1}{2}}\|(w-u)(t)\|_{\dot B_{2,1}^{\frac{d}{2}-1}}\right). \hfill }$$
    \item[$\bullet$] We have the following relation: $$\displaylines{\mathbb{P}(u\cdot\nabla)u-(w\cdot\nabla)w=\mathbb{P}\left((u-w)\cdot\nabla\right)u+\mathbb{P}(w\cdot\nabla)(u-w)\hfill\cr\hfill(\mathbb{P}-Id)(w\cdot \nabla)w.}$$

    By the product laws of Lemma \ref{Produit espace de Besov}, we have for $d\geq 3$: \begin{equation}\label{cas 2}\begin{aligned}  \|\mathbb{P} & (u\cdot\nabla)u-(w\cdot\nabla)w\|_{\dot B_{2,1}^{\frac{d}{2}-1}}^l \\  \lesssim & \|w-u\|_{\dot B_{2,1}^{\frac{d}{2}-1}}\|u\|_{\dot B_{2,1}^{\frac{d}{2}+1}}+\|(w\cdot\nabla)(u-w)\|_{\dot B_{2,1}^{\frac{d}{2}-2}}^l+\|w\|_{\dot B_{2,1}^{\frac{d}{2}-1}}\|w\|_{\dot B_{2,1}^{\frac{d}{2}+1}}\\
    \lesssim & \|w-u\|_{\dot B_{2,1}^{\frac{d}{2}-1}}\|u\|_{\dot B_{2,1}^{\frac{d}{2}+1}}+\|w-u\|_{\dot B_{2,1}^{\frac{d}{2}-1}}\|w\|_{\dot B_{2,1}^{\frac{d}{2}}}+\|w\|_{\dot B_{2,1}^{\frac{d}{2}-1}}\|w\|_{\dot B_{2,1}^{\frac{d}{2}+1}}. \end{aligned}\end{equation}
The first and third terms of the right-hand side have already been dealt with in the previous paragraph on high frequencies. We have: $$\displaylines{\int_0^t (1+c_0 t)^{\frac{d+1}{2}}e^{-(t-\tau)}\left(\|w-u\|_{\dot B_{2,1}^{\frac{d}{2}-1}}\|u\|_{\dot B_{2,1}^{\frac{d}{2}+1}}+\|w\|_{\dot B_{2,1}^{\frac{d}{2}-1}}\|w\|_{\dot B_{2,1}^{\frac{d}{2}+1}}\right)d\tau \hfill \cr \lesssim(\mathcal{Z}_0)^2+\mathcal{Z}_0 \ \underset{t\geq 0}{\sup}\left((1+c_0 t)^{\frac{d+1}{2}}\|w-u\|_{\dot B_{2,1}^{\frac{d}{2}-1}}\right).\hfill}$$

For the term $\|w-u\|_{\dot B_{2,1}^{\frac{d}{2}-1}}\|w\|_{\dot B_{2,1}^{\frac{d}{2}}}$, we proceed as follows:
$$\displaylines{\int_0^t (1+c_0 t)^{\frac{d+1}{2}}e^{-(t-\tau)}\|w-u\|_{\dot B_{2,1}^{\frac{d}{2}-1}}\|w\|_{\dot B_{2,1}^{\frac{d}{2}}} d\tau \hfill\cr\lesssim \underset{t\geq 0}{\sup}\left(\|w(t)\|_{\dot B_{2,1}^{\frac{d}{2}}}\right) \ \underset{t\geq 0}{\sup}\left((1+c_0 t)^{\frac{d+1}{2}}\|(w-u)(t)\|_{\dot B_{2,1}^{\frac{d}{2}-1}}\right) \hfill\cr\hfill \times\int_0^t \left(\frac{1+c_0 t}{1+c_0 \tau}\right)^{\frac{d+1}{2}}e^{-(t-\tau)} d\tau\hfill\cr \lesssim\mathcal{Z}_0 \ \underset{t\geq 0}{\sup}\left((1+c_0 t)^{\frac{d+1}{2}}\|(w-u)(t)\|_{\dot B_{2,1}^{\frac{d}{2}-1}}\right). \hfill }$$

Finally, we have: 
$$\displaylines{\int_0^t (1+c_0 t)^{\frac{d+1}{2}}\|\mathbb{P}(u\cdot\nabla)u-(w\cdot\nabla)w\|_{\dot B_{2,1}^{\frac{d}{2}-1}}^l d\tau \hfill\cr \lesssim (\mathcal{Z}_0)^2+\mathcal{Z}_0 \ \underset{t\geq 0}{\sup}\left((1+c_0 t)^{\frac{d+1}{2}}\|(w-u)(t)\|_{\dot B_{2,1}^{\frac{d}{2}-1}}\right)}$$

\item[$\bullet$] Using the following relation $$(u\cdot\nabla)u=\left((u-w)\cdot\nabla\right)u+(w\cdot\nabla)(u-w)+(w\cdot\nabla) w,$$
we have: $$\displaylines{\int_0^t (1+c_0 t)^{\frac{d+1}{2}}e^{-c(t-\tau)}\|\mathbb{P}(u\cdot\nabla)u\|_{\dot B_{2,1}^{\frac{d}{2}-1}}^l d\tau \hfill\cr \lesssim \int_0^t (1+c_0 t)^{\frac{d+1}{2}}e^{-c(t-\tau)}\|\left((u-w)\cdot\nabla\right)u\|_{\dot B_{2,1}^{\frac{d}{2}-1}}^l d\tau \hfill\cr\hfill+\int_0^t (1+c_0 t)^{\frac{d+1}{2}}e^{-c(t-\tau)}\|\left(w\cdot\nabla\right)(u-w)\|_{\dot B_{2,1}^{\frac{d}{2}-1}}^l d\tau \hfill\cr\hfill+\int_0^t (1+c_0 t)^{\frac{d+1}{2}}e^{-c(t-\tau)}\|\left(w\cdot\nabla\right)w\|_{\dot B_{2,1}^{\frac{d}{2}-1}}^l d\tau.}$$
All these terms have already been studied previously and we can deduce the following: $$\displaylines{\int_0^t (1+c_0 t)^{\frac{d+1}{2}}e^{-c(t-\tau)}\|\mathbb{P}(u\cdot\nabla)u\|_{\dot B_{2,1}^{\frac{d}{2}-1}}^l d\tau \hfill\cr \lesssim (\mathcal{Z}_0)^2+\mathcal{Z}_0 \ \underset{t\geq 0}{\sup}\left((1+c_0 t)^{\frac{d+1}{2}} \|w-u\|_{\dot B_{2,1}^{\frac{d}{2}-1}}\right).}$$

By smallness of $\mathcal{Z}_0$, we deduce \eqref{estimée décroissance u-v bf 3d}.
\end{itemize}
\end{proof}
By combining the previous lemma with Lemma \ref{estimée décroissance u,v hf}, we obtain \eqref{estimée décroissance besov u-v 3d}.

\subsubsection{Case of dimension 2}

It remains to deal with the case of dimension 2 with the decay estimate \eqref{estimée décroissance besov u-v 2d} to be proved.

\begin{lemma}\label{estimée décroissance u,v hf 2d}
    We have the following inequality for $d=2$, for all $t\in\R^+$ :
    $$\displaylines{\underset{t\geq 0}{\sup}\left((1+c_0 t)\|(w,u)(t)\|_{\dot B_{2,1}^{0}}^h\right) \hfill\cr \lesssim \mathcal{Z}(0)+\mathcal{Z}(0) \ \underset{t\geq 0}{\sup}\left((1+c_0 t)\|(w-u)(t)\|_{\dot B_{2,1}^{0}}^l\right).}$$
\end{lemma}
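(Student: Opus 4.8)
The plan is to reproduce the proof of Lemma~\ref{estimée décroissance u,v hf} line by line, specializing to $d=2$, so that $\dot B_{2,1}^{\frac{d}{2}-1}=\dot B_{2,1}^{0}$, $\dot B_{2,1}^{\frac{d}{2}}=\dot B_{2,1}^{1}$, $\dot B_{2,1}^{\frac{d}{2}+1}=\dot B_{2,1}^{2}$, and replacing the temporal weight $(1+c_0t)^{\frac{d+1}{2}}$ by $(1+c_0t)$. The exponent has to drop from $\tfrac{3}{2}$ to $1$ because in dimension $2$ the basic decay estimate~\eqref{estimée de décroissance besov} only gives the rate $(1+c_0t)^{-1/2}$ for each of $\|u(t)\|_{\dot B_{2,1}^{0}}$, $\|w(t)\|_{\dot B_{2,1}^{2}}$ and $\|(w-u)(t)\|_{\dot B_{2,1}^{0}}$; a product of two such factors then decays exactly like $(1+c_0t)^{-1}$, which is the most a quadratic term can carry.

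I would first treat the high frequencies of $u$. Applying $\dot\Delta_j$ with $j\geq0$ to $\partial_t u-\Delta u=-\mathbb{P}(u\cdot\nabla)u+\mathbb{P}\rho(w-u)$, taking the $L^2$ inner product with $u_j$, using $\|\nabla u_j\|_{L^2}^2\geq c_B^{-1}2^{2j}\|u_j\|_{L^2}^2$, writing $(u\cdot\nabla)u=((u-w)\cdot\nabla)u+(w\cdot\nabla)u$, peeling off the commutator $[\dot\Delta_j,w\cdot\nabla]u$ and using $\int_{\R^d}(w\cdot\nabla)u_j\cdot u_j\,dx=-\tfrac{1}{2}\int_{\R^d}(\dive w)|u_j|^2\,dx$, and then invoking the product laws of Lemma~\ref{Produit espace de Besov}, the commutator estimates of Lemma~\ref{commutateur} and Lemma~\ref{lemme edo}, I expect to obtain, after summing over $j\geq0$ and using $e^{-c2^{2j}t}\leq e^{-ct}$,
$$\|u(t)\|_{\dot B_{2,1}^{0}}^h\lesssim e^{-ct}\|u_0\|_{\dot B_{2,1}^{0}}^h+\int_0^t e^{-c(t-\tau)}\Big(\|w-u\|_{\dot B_{2,1}^{0}}\|u\|_{\dot B_{2,1}^{2}}+\|w\|_{\dot B_{2,1}^{2}}\|u\|_{\dot B_{2,1}^{0}}+\mathcal{Z}_0\,\|w-u\|_{\dot B_{2,1}^{0}}\Big)\,d\tau,$$
the $\rho$-term having been bounded through~\eqref{estimée théorème système 1}. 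Multiplying by $(1+c_0t)$ and splitting each time integral at $t/2$ exactly as in the case $d\geq3$: the terms carrying $\|u\|_{\dot B_{2,1}^{2}}$ or the explicit factor $\mathcal{Z}_0$ are treated using $\int_0^t\|u\|_{\dot B_{2,1}^{2}}\,d\tau\lesssim\mathcal{Z}_0$, $\sup_t\|(w-u)(t)\|_{\dot B_{2,1}^{0}}\lesssim\mathcal{Z}_0$ and Lemma~\ref{int exp}, and produce $\mathcal{Z}_0^2+\mathcal{Z}_0\sup_t(1+c_0t)\|(w-u)(t)\|_{\dot B_{2,1}^{0}}$, while for $\|w\|_{\dot B_{2,1}^{2}}\|u\|_{\dot B_{2,1}^{0}}$ I would use the pointwise bound $(1+c_0\tau)^{-1}\mathcal{Z}_0^2$ from~\eqref{estimée de décroissance besov} together with Lemma~\ref{int exp} to get a contribution $\lesssim\mathcal{Z}_0^2$. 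This yields
$$\sup_{t\geq0}(1+c_0t)\|u(t)\|_{\dot B_{2,1}^{0}}^h\lesssim\mathcal{Z}_0+\mathcal{Z}_0\sup_{t\geq0}(1+c_0t)\|(w-u)(t)\|_{\dot B_{2,1}^{0}}.$$

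The same scheme applied to $w$, starting from $\partial_t w+w=u-(w\cdot\nabla)w$ and Duhamel's formula $w_j(t)=e^{-t}w_{j,0}+\int_0^t e^{-(t-\tau)}\big(u_j-\dot\Delta_j(w\cdot\nabla)w\big)\,d\tau$, gives, after multiplying by $(1+c_0t)$ and summing over $j\geq0$ (the $u_j$ contribution being controlled by $\sup_t(1+c_0t)\|u(t)\|_{\dot B_{2,1}^{0}}^h$ via Lemma~\ref{int exp}, and $\|(w\cdot\nabla)w\|_{\dot B_{2,1}^{0}}\lesssim\|w\|_{\dot B_{2,1}^{0}}\|w\|_{\dot B_{2,1}^{2}}\lesssim(1+c_0\tau)^{-1}\mathcal{Z}_0^2$ by~\eqref{estimée de décroissance besov} contributing $\lesssim\mathcal{Z}_0^2$),
$$\sup_{t\geq0}(1+c_0t)\|w(t)\|_{\dot B_{2,1}^{0}}^h\lesssim\mathcal{Z}_0+\sup_{t\geq0}(1+c_0t)\|u(t)\|_{\dot B_{2,1}^{0}}^h.$$
Combining the last two displays, writing $\|w-u\|_{\dot B_{2,1}^{0}}=\|w-u\|_{\dot B_{2,1}^{0}}^l+\|w-u\|_{\dot B_{2,1}^{0}}^h$ with $\|w-u\|_{\dot B_{2,1}^{0}}^h\leq\|u\|_{\dot B_{2,1}^{0}}^h+\|w\|_{\dot B_{2,1}^{0}}^h$, and absorbing the resulting high-frequency quantities on the left by smallness of $\mathcal{Z}_0$, one reaches the claimed estimate.

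The main obstacle, relative to $d\geq3$, is the tightness of the scheme: the decay budget furnished by~\eqref{estimée de décroissance besov} is only $(1+c_0t)^{-1/2}$ per factor, so there is no slack, and one has to check carefully that the weight $(1+c_0t)$ — and no larger one — closes every quadratic term against Lemma~\ref{int exp}; this is precisely why the decay rate in dimension $2$ is genuinely slower than in dimension $d\geq3$. A secondary point is that all products now live in the endpoint space $\dot B_{2,1}^{0}=\dot B_{2,1}^{d/2-1}$, so one must make sure the product rules $\dot B_{2,1}^{0}\cdot\dot B_{2,1}^{1}\hookrightarrow\dot B_{2,1}^{0}$ and $\dot B_{2,1}^{1}\cdot\dot B_{2,1}^{0}\hookrightarrow\dot B_{2,1}^{0}$ of Lemma~\ref{Produit espace de Besov} are still applicable — they are, since one index equals $d/2=1$, the other is strictly below it, and their sum is positive.
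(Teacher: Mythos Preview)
Your proposal is correct and follows exactly the approach the paper indicates: reproduce the proof of Lemma~\ref{estimée décroissance u,v hf} with the weight $(1+c_0t)^{\frac{d+1}{2}}$ replaced by $(1+c_0t)$, which is forced because in $d=2$ the decay~\eqref{estimée de décroissance besov} only gives $(1+c_0t)^{-1/2}$ per factor and hence the quadratic term $\|w\|_{\dot B_{2,1}^{2}}\|u\|_{\dot B_{2,1}^{0}}$ (the analogue of~\eqref{cas 1}) can carry at most $(1+c_0t)^{-1}$. Your detailed verification of each term, and your remark on the endpoint product law $\dot B_{2,1}^{0}\times\dot B_{2,1}^{1}\hookrightarrow\dot B_{2,1}^{0}$ in $d=2$, are on target.
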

\begin{proof}
    To prove this result, we need to repeat the proof of the lemma \ref{estimée décroissance u,v hf}. The only inequality that does not work in the case of dimension 2 is the inequality \eqref{cas 1} where we used $(1+c_0t)\leq(1+c_0 t)^{\frac{d-1}{2}}$ which is true only if $d\geq 3$. We therefore need to reproduce the proof of this lemma and look at $(1+c_0t)\|(w,u)(t)\|_{\dot B_{2,1}^{0}}^h$ instead of $(1+c_0t)^{\frac{3}{2}}\|(w,u)(t)\|_{\dot B_{2,1}^{0}}^h$.
\end{proof}
 
\begin{lemma}
    We have the following estimate at low frequencies for $d=2$ : 
    \begin{equation}\underset{t\geq 0}{\sup}\left((1+c_0 t)\|w-u\|_{\dot B_{2,1}^{0}}^l\right)\lesssim \mathcal{Z}_0+ \mathcal{Z}_0 \ \underset{t\geq 0}{\sup}\left(1+c_0 t)\|w-u\|_{\dot B_{2,1}^{0}}^h\right).\end{equation}
\end{lemma}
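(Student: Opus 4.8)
The plan is to mimic, step by step, the low-frequency argument of Lemma~\ref{lemme dimension 3}, now with $d=2$ (so that $\frac d2-1=0$ and $\frac d2+1=2$) and with the time weight $(1+c_0t)^{\frac{d+1}{2}}$ replaced everywhere by the weaker weight $(1+c_0t)$. I would start again from the coupled system for the pair $(w-u,u)$, i.e. $\partial_t(w-u)+(w-u)+\Delta u=F_1$ and $\partial_t u-\Delta u=F_2$ with $F_1=\mathbb{P}(u\cdot\nabla)u-(w\cdot\nabla)w-\mathbb{P}\rho(w-u)$ and $F_2=-\mathbb{P}(u\cdot\nabla)u+\mathbb{P}\rho(w-u)$, apply $\dot\Delta_j$, pass to the Fourier side, use $|\xi|^2\simeq 2^{2j}$ and Lemma~\ref{semi-groupe} together with Duhamel's formula to get, for $j\le -1$,
$$\|(w-u)_j(t)\|_{L^2}\lesssim e^{-t}\|(w-u)_{0,j}\|_{L^2}+t\,e^{-t2^{2j}}2^{2j}\|u_{0,j}\|_{L^2}+\int_0^t e^{-(t-\tau)}\|\dot\Delta_j F_1\|_{L^2}\,d\tau+\int_0^t (t-\tau)e^{-(t-\tau)2^{2j}}2^{2j}\|\dot\Delta_j F_2\|_{L^2}\,d\tau .$$
Then I would multiply by $(1+c_0t)$ (here the factor $2^{j(\frac d2-1)}$ is trivial), sum over $j\le -1$, and use $x e^{-x}\lesssim e^{-cx}$ to absorb the parabolic factors $t e^{-t2^{2j}}2^{2j}$ and $(t-\tau)e^{-(t-\tau)2^{2j}}2^{2j}$; the data terms then contribute $\lesssim(1+c_0t)e^{-ct}\mathcal{Z}_0\lesssim\mathcal{Z}_0$.

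Next I would bound the low-frequency norms of $F_1$ and $F_2$ using the same algebraic decompositions as in Lemma~\ref{lemme dimension 3}, namely $\mathbb{P}(u\cdot\nabla)u-(w\cdot\nabla)w=\mathbb{P}((u-w)\cdot\nabla)u+\mathbb{P}(w\cdot\nabla)(u-w)+(\mathbb{P}-Id)(w\cdot\nabla)w$ and $(u\cdot\nabla)u=((u-w)\cdot\nabla)u+(w\cdot\nabla)(u-w)+(w\cdot\nabla)w$, together with $\|\mathbb{P}\rho(w-u)\|_{\dot B^{0}_{2,1}}\lesssim\|\rho\|_{\dot B^{1}_{2,1}}\|w-u\|_{\dot B^{0}_{2,1}}\lesssim\mathcal{Z}_0\|w-u\|_{\dot B^{0}_{2,1}}$. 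By the product laws of Lemma~\ref{Produit espace de Besov} everything reduces to the quantities $\|w-u\|_{\dot B^{0}_{2,1}}\|u\|_{\dot B^{2}_{2,1}}$, $\|u\|_{\dot B^{0}_{2,1}}\|w\|_{\dot B^{2}_{2,1}}$, $\|w\|_{\dot B^{0}_{2,1}}\|w\|_{\dot B^{2}_{2,1}}$ and $\|w-u\|_{\dot B^{0}_{2,1}}\|w\|_{\dot B^{1}_{2,1}}$, the last of which I would treat by interpolating $\|w\|_{\dot B^{1}_{2,1}}$ between $\dot B^{0}_{2,1}$ and $\dot B^{2}_{2,1}$. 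The crucial input is that, by \eqref{estimée de décroissance besov} in dimension $2$, each of $\|u(t)\|_{\dot B^{0}_{2,1}}$, $\|w(t)\|_{\dot B^{2}_{2,1}}$ and $\|w(t)\|_{\dot B^{0}_{2,1}}\le\|w-u\|_{\dot B^{0}_{2,1}}+\|u\|_{\dot B^{0}_{2,1}}$ is of size $(1+c_0t)^{-1/2}\mathcal{Z}_0$, while for $\|u\|_{\dot B^{2}_{2,1}}$ only the $L^1$-in-time control \eqref{estimée théorème système 1} is available. I would then split each time integral as $\int_0^{t/2}+\int_{t/2}^t$ exactly as in Lemma~\ref{lemme dimension 3}: on $[0,t/2]$ the exponential $e^{-c(t-\tau)}\lesssim e^{-ct/2}$ beats the polynomial weight and one uses the time-integrability of $\|u\|_{\dot B^{2}_{2,1}}$ (resp. the uniform bound on $\|\rho\|_{\dot B^{1}_{2,1}}$), and on $[t/2,t]$ one uses $\frac{1+c_0t}{1+c_0\tau}\lesssim 1$ and Lemma~\ref{int exp}. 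This should bound every nonlinear contribution by $(\mathcal{Z}_0)^2+\mathcal{Z}_0\sup_{t\ge 0}\big((1+c_0t)\|w-u\|_{\dot B^{0}_{2,1}}\big)$; splitting $\|w-u\|_{\dot B^{0}_{2,1}}=\|w-u\|^l_{\dot B^{0}_{2,1}}+\|w-u\|^h_{\dot B^{0}_{2,1}}$ and absorbing the low-frequency self-term on the left by smallness of $\mathcal{Z}_0$ yields the claim.

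The main obstacle, and the reason the rate is limited to $(1+c_0t)^{-1}$ rather than the $(1+c_0t)^{-\frac{d+1}{2}}=(1+c_0t)^{-3/2}$ suggested by Lemma~\ref{lemme dimension 3}, is precisely the borderline already met in Lemma~\ref{estimée décroissance u,v hf 2d}: the products $\|u\|_{\dot B^{0}_{2,1}}\|w\|_{\dot B^{2}_{2,1}}$ and $\|w\|_{\dot B^{0}_{2,1}}\|w\|_{\dot B^{2}_{2,1}}$ only decay like $(1+c_0\tau)^{-1}$ in dimension $2$, so that $\int_0^t e^{-c(t-\tau)}\big(\tfrac{1+c_0t}{1+c_0\tau}\big)^{\beta}(1+c_0\tau)^{-1}\,d\tau$ remains bounded only for $\beta\le 1$; taking $\beta=\tfrac{d+1}{2}$ would require $(1+c_0\tau)^{\frac{d-1}{2}}\le(1+c_0\tau)^{d-1}$, i.e. $d\ge 3$. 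Hence the weight must be lowered to $(1+c_0t)^{1}$, after which the argument is a routine transcription of the proof of Lemma~\ref{lemme dimension 3}, with a small extra care only for the $\dot B^{-1}_{2,1}$-valued term $\mathbb{P}(w\cdot\nabla)(u-w)$, handled by the product law followed by the interpolation bound on $\|w\|_{\dot B^{1}_{2,1}}$.
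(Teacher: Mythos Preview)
Your overall strategy is exactly the paper's: rerun the low-frequency Duhamel argument of Lemma~\ref{lemme dimension 3} with the time weight lowered from $(1+c_0t)^{\frac{d+1}{2}}$ to $(1+c_0t)$, and you correctly isolate $(w\cdot\nabla)(u-w)$ as the term that needs special care. However, your justification for the bound
\[
\|(w\cdot\nabla)(u-w)\|_{\dot B^{0}_{2,1}}^{l}\lesssim \|w\|_{\dot B^{1}_{2,1}}\|w-u\|_{\dot B^{0}_{2,1}}
\]
is where the gap lies. You say it follows ``by the product law'', but Lemma~\ref{Produit espace de Besov} does \emph{not} give this directly when $d=2$: writing the factors as $w\in\dot B^{1}_{2,1}$ and $\nabla(u-w)\in\dot B^{-1}_{2,1}$, one is in the endpoint case $s+s'=0$, and the output is only in $\dot B^{-1}_{2,\infty}$, not $\dot B^{-1}_{2,1}$ (this is exactly why inequality~\eqref{cas 2} from the proof of Lemma~\ref{lemme dimension 3} fails for $d=2$). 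Calling it a ``$\dot B^{-1}_{2,1}$-valued term'' is therefore not quite right, and interpolating $\|w\|_{\dot B^{1}_{2,1}}$ afterwards does not repair this.

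The missing ingredient, which is the whole point of the paper's proof here, is the elementary low-frequency embedding
\[
\|v\|_{\dot B^{0}_{2,1}}^{l}=\sum_{j\le -1}2^{j}\cdot 2^{-j}\|v_j\|_{L^2}\le\Big(\sum_{j\le -1}2^{j}\Big)\sup_{j\le -1}2^{-j}\|v_j\|_{L^2}\lesssim\|v\|_{\dot B^{-1}_{2,\infty}},
\]
which lets you accept the $\dot B^{-1}_{2,\infty}$ output of the endpoint product law and still control the $\dot B^{0}_{2,1}$ low-frequency norm. Once this step is inserted, your bound $\|w\|_{\dot B^{1}_{2,1}}\|w-u\|_{\dot B^{0}_{2,1}}$ is correct (and you may simply use $\|w\|_{\dot B^{1}_{2,1}}\lesssim\mathcal Z_0$ from \eqref{estimée théorème système 1} rather than interpolate); the rest of your argument then goes through exactly as you describe. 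Also note that your discussion attributes the weight reduction solely to the ``\eqref{cas 1}-type'' decay obstruction; that obstruction is real, but the paper singles out the failure of~\eqref{cas 2} as the specific new difficulty for this low-frequency lemma.
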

\begin{proof}
    Compared to the proof of \eqref{estimée décroissance u-v bf 3d}, we have to bound $(1+c_0t)\|(w-u)(t)\|_{\dot B_{2,1}^{\frac{d}{2}+1}}^l$ instead of $(1+c_0t)^{\frac{d-1}{2}}\|(w-u)(t)\|_{\dot B_{2,1}^{\frac{d}{2}-1}}^l$. In fact, \eqref{cas 2} does not hold for $d=2$. To remedy this, we need to use: $$\|v\|_{\dot B_{2,1}^{0}}^l= \sum_{j\in\Z^{-}}2^{-j} \|\dot\Delta_j v\|_{L^2}2^{j}\lesssim \underset{j\in\Z^{-}}{\sup} \left(2^{-j} \|\dot\Delta_j v\|_{L^2}\right) = \|v\|_{\dot B_{2,\infty}^{-1}}.$$

    Then, by lemma \ref{Produit espace de Besov}, we have:
    \begin{align*}  \|(w\cdot\nabla) (u-w)\|_{\dot B_{2,1}^{0}}^l & \lesssim  \|(w\cdot\nabla) (u-w)\|_{\dot B_{2,\infty}^{-1}}^l \\ & \lesssim \|w\|_{\dot B_{2,1}^1}\|w-u\|_{\dot B_{2,1}^{0}},\end{align*} and we can then proceed as in Lemma \ref{lemme dimension 3}.
\end{proof}

\subsection{Estimates assuming more regularities}
Let us begin with a proposition for the propagation of regularity:
\begin{prop}
Assume the same assumptions as in Theorem \ref{théorème existence et unicité}. If the initial data $(\rho_0,w_0,u_0)$ also satisfies \eqref{condition régularité supplémentaire} for a fixed $k\geq 1$, then there exists a unique solution $(\rho,w,u)$ satisfying the following a priori estimate: 
\begin{equation}\label{inégalité fonctionnelle k}
    \mathcal{L}_k(t)+\int_0^t\mathcal{H}_k(\tau) d\tau \leq \mathcal{L}_k(0),
\end{equation}
where
\begin{equation}\label{définition L_k}\mathcal{L}_k(t)\mathrel{\mathop:}=\mathcal{L}(t)+\frac{1}{2c_B^2}\|w(t)\|_{\dot B_{2,1}^{\frac{d}{2}+1+k}}+\|u(t)\|_{\dot B_{2,1}^{\frac{d}{2}-1+k}}, \end{equation}

\begin{equation}\label{définition H_k}\mathcal{H}_k(t)\mathrel{\mathop:}=\mathcal{H}(t)+\frac{1}{2c_B^2}\|w(t)\|_{\dot B_{2,1}^{\frac{d}{2}+1+k}}+\frac{1}{2c_B}\|u(t)\|_{\dot B_{2,1}^{\frac{d}{2}+1+k}},\end{equation}
    with $\mathcal{L}$ and $\mathcal{H}$ defined in \eqref{définition de L} and \eqref{définition de H}.
\end{prop}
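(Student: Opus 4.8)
The plan is to run, at the shifted regularity, the same energy scheme that produced the critical a priori estimate~\eqref{inégalité fonctionnelle}, and then to close the resulting coupled differential inequality using the smallness already supplied by Theorem~\ref{théorème existence et unicité2}. Throughout I would use that the critical solution satisfies $\mathcal{Z}(t)\le C\mathcal{Z}_0$, so that $\|\rho\|_{L^\infty(\dot B_{2,1}^{d/2})}$, $\int_0^\infty\|w\|_{\dot B_{2,1}^{d/2+1}}\,d\tau$, $\int_0^\infty\|u\|_{\dot B_{2,1}^{d/2+1}}\,d\tau$ and $\int_0^\infty\|w-u\|_{\dot B_{2,1}^{d/2-1}}\,d\tau$ may all be taken arbitrarily small. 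I would then establish, by localising with $\dot\Delta_j$, performing the $L^2$ energy identity, invoking the commutator estimate of Lemma~\ref{commutateur} and the product laws of Lemma~\ref{Produit espace de Besov}, and summing with the weight $2^{js}$ for the relevant $s$, three higher-order estimates: (i) a transport estimate for $\rho$ in $\dot B_{2,1}^{d/2+k}$ generalising Lemma~\ref{équation de transport} --- since $d/2+k\ge d/2+1$ the transport/commutator bound produces here an extra term of the form $\|w\|_{\dot B_{2,1}^{d/2+1+k}}\|\rho\|_{\dot B_{2,1}^{d/2}}$ (up to interpolation), harmless because $\|\rho\|_{\dot B_{2,1}^{d/2}}$ is small; (ii) a damped estimate for $w$ in $\dot B_{2,1}^{d/2+1+k}$ generalising Lemma~\ref{estimée sur u lemme}, the damping $+w$ yielding $\|w\|_{L^\infty_t(\dot B_{2,1}^{d/2+1+k})}+\int_0^t\|w\|_{\dot B_{2,1}^{d/2+1+k}}\,d\tau$ bounded by $\|w_0\|_{\dot B_{2,1}^{d/2+1+k}}$ plus $\int_0^t\|u\|_{\dot B_{2,1}^{d/2+1+k}}\,d\tau$ plus quadratic remainders; (iii) a parabolic estimate for $u$ in $\dot B_{2,1}^{d/2-1+k}$ generalising Lemma~\ref{lemme estimée sur v}, the Bernstein gain of two derivatives furnishing the dissipation $\tfrac1{c_B}\int_0^t\|u\|_{\dot B_{2,1}^{d/2+1+k}}\,d\tau$.

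The delicate source term is the Navier--Stokes coupling $\mathbb{P}(\rho(w-u))$ in (iii). Splitting it by the paraproduct decomposition one has $\|\rho(w-u)\|_{\dot B_{2,1}^{d/2-1+k}}\lesssim\|\rho\|_{\dot B_{2,1}^{d/2}}\|w-u\|_{\dot B_{2,1}^{d/2-1+k}}+\|\rho\|_{\dot B_{2,1}^{d/2+k}}\|w-u\|_{\dot B_{2,1}^{d/2-1}}$; in the first product I would interpolate $\|w-u\|_{\dot B_{2,1}^{d/2-1+k}}\le\epsilon\,\|w-u\|_{\dot B_{2,1}^{d/2+1+k}}+C_\epsilon\,\|w-u\|_{\dot B_{2,1}^{d/2-1}}$ between the extreme indices, with $\epsilon$ fixed (a small but universal fraction of the dissipation coefficients, so that $C_\epsilon$ is a universal constant), so that after the small factor $\|\rho\|_{\dot B_{2,1}^{d/2}}$ the first part sinks into $\int_0^t\mathcal{H}_k$ and the second is bounded by $C_\epsilon\,\|\rho\|_{L^\infty(\dot B_{2,1}^{d/2})}\int_0^t\|w-u\|_{\dot B_{2,1}^{d/2-1}}\,d\tau\lesssim\mathcal{Z}_0^2$. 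The second product is controlled by $\|\rho\|_{L^\infty(\dot B_{2,1}^{d/2+k})}\int_0^t\|w-u\|_{\dot B_{2,1}^{d/2-1}}\,d\tau$, hence by $\mathcal{L}_k(0)$ times a small factor once the $\dot B_{2,1}^{d/2+k}$-bound on $\rho$ is in hand. The convection terms at shifted regularity are treated analogously: for $w$ the commutator trick leaves only $\|\dive w\|_{L^\infty}\|w\|_{\dot B_{2,1}^{d/2+1+k}}$ and $\|w\|_{\dot B_{2,1}^{d/2+1}}\|w\|_{\dot B_{2,1}^{d/2+1+k}}$, while for $u$ the elementary identity $\|u\|_{\dot B_{2,1}^{a}}\|u\|_{\dot B_{2,1}^{b}}\lesssim\|u\|_{\dot B_{2,1}^{d/2-1}}\|u\|_{\dot B_{2,1}^{d/2+1+k}}$, valid for $d/2-1\le a,b\le d/2+1+k$ with $a+b=(d/2-1)+(d/2+1+k)$, reduces every quadratic term to $\|u\|_{L^\infty(\dot B_{2,1}^{d/2-1})}$ (small) times $\|u\|_{\dot B_{2,1}^{d/2+1+k}}$ (inside $\mathcal{H}_k$).

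I would then combine. From (i) and Grönwall (using $\int_0^\infty\|w\|_{\dot B_{2,1}^{d/2+1}}\,d\tau$ small) one gets $\|\rho(t)\|_{\dot B_{2,1}^{d/2+k}}\lesssim\|\rho_0\|_{\dot B_{2,1}^{d/2+k}}+\mathcal{Z}_0\int_0^t\|w\|_{\dot B_{2,1}^{d/2+1+k}}\,d\tau$, so that the coupling term feeds back into the system only a small multiple of $\int_0^t\mathcal{H}_k$ and a small multiple of $\mathcal{L}_k(0)$. Adding the critical inequality~\eqref{inégalité fonctionnelle} to (iii) and to $\tfrac1{2c_B^2}$ times (ii), the source $\tfrac1{2c_B^2}\int_0^t\|u\|_{\dot B_{2,1}^{d/2+1+k}}\,d\tau$ from the $w$-equation is absorbed by part of the parabolic gain $\tfrac1{c_B}\int_0^t\|u\|_{\dot B_{2,1}^{d/2+1+k}}\,d\tau$ (this uses $c_B\ge1$, and is exactly why the coefficient $\tfrac1{2c_B}$ appears in $\mathcal{H}_k$ in~\eqref{définition H_k}). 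All remaining terms are of the form $(\text{small})\times\int_0^t\mathcal{H}_k$, or $g(\tau)\,\mathcal{L}_k(\tau)$ with $g\in L^1(\R^+)$ of small norm, or $\mathcal{L}_k(0)$ times a small factor; a Grönwall argument together with a bootstrap/continuation in time (the smallness keeping the Grönwall exponentials $\le 2$ on every sub-interval) then yields~\eqref{inégalité fonctionnelle k} on all of $\R^+$. Uniqueness in this more regular class is already contained in the stability estimate of Proposition~\ref{proposition stabilité}, and the fact that the solution of Theorem~\ref{théorème existence et unicité} carries the extra regularity follows by propagating the uniform bound $\mathcal{L}_k^n(t)+\int_0^t\mathcal{H}_k^n\,d\tau\le\mathcal{L}_k^n(0)\le\mathcal{L}_k(0)$ through the same approximation scheme and passing to the limit as in the critical case.

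The main obstacle is the coupling term $\mathbb{P}(\rho(w-u))$ in the higher-order Navier--Stokes estimate: it forces one to propagate $\rho$ in $\dot B_{2,1}^{d/2+k}$ simultaneously, and because the continuity equation is purely transported, with no smoothing, that estimate at regularity $d/2+k>d/2+1$ is borderline and leaves an extra commutator term to reabsorb; keeping the three higher-order estimates coupled and closing the whole system with only the available smallness is where the care lies.
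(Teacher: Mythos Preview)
Your proposal is correct and follows essentially the same route as the paper: run the localized $L^2$ energy scheme at the shifted indices $\tfrac{d}{2}+k$, $\tfrac{d}{2}+1+k$, $\tfrac{d}{2}-1+k$ for $\rho$, $w$, $u$ respectively, combine with the critical estimate~\eqref{inégalité fonctionnelle} using the weights $\tfrac{1}{2c_B^2}$ and $1$, and close by the smallness of $\mathcal{Z}_0$. The only cosmetic difference is that the paper disposes of the dangerous factor $\|w-u\|_{\dot B_{2,1}^{d/2-1+k}}$ in the $\rho(w-u)$ term by a low/high frequency split (so $\|w-u\|_{\dot B_{2,1}^{d/2-1+k}}\le\|w-u\|_{\dot B_{2,1}^{d/2-1}}^l+\|(w,u)\|_{\dot B_{2,1}^{d/2+1+k}}^h$), whereas you use the equivalent device of interpolation-plus-Young between the extreme indices; both feed the same pieces back into $\mathcal{H}_k$ and the critical dissipation.
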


\begin{proof}
   Arguing as in the proof of Lemma \ref{lemme estimée sur v}, we get: $$\displaylines{\|u(t)\|_{\dot B_{2,1}^{\frac{d}{2}-1+k}}+\frac{1}{c_B}\int_0^t \|u\|_{\dot B_{2,1}^{\frac{d}{2}+1+k}}d\tau\leq \|u_0\|_{\dot B_{2,1}^{\frac{d}{2}-1+k}}+\int_0^t \|(u\cdot\nabla)u\|_{\dot B_{2,1}^{\frac{d}{2}-1+k}} d\tau \hfill\cr\hfill+\int_0^t \|\rho(w-u)\|_{\dot B_{2,1}^{\frac{d}{2}-1+k}} d\tau.}$$

By Lemma \ref{Produit espace de Besov} and by interpolation, we have: 
\begin{align*}
    \|(u\cdot\nabla)u\|_{\dot B_{2,1}^{\frac{d}{2}-1+k}} & \lesssim \|u\|_{L^\infty}\|\nabla u\|_{\dot B_{2,1}^{\frac{d}{2}-1+k}}+\|u\|_{\dot B_{2,1}^{\frac{d}{2}-1+k}}\|\nabla u\|_{L^\infty} \\
    &  \lesssim \|u\|_{\dot B_{2,1}^{\frac{d}{2}}}\|\nabla u\|_{\dot B_{2,1}^{\frac{d}{2}-1+k}}+\|u\|_{\dot B_{2,1}^{\frac{d}{2}-1+k}}\|u\|_{\dot B_{2,1}^{\frac{d}{2}+1}} \\
    & \lesssim\sqrt{\|u\|_{\dot B_{2,1}^{\frac{d}{2}-1}}\|u\|_{\dot B_{2,1}^{\frac{d}{2}+1}}\|u\|_{\dot B_{2,1}^{\frac{d}{2}-1+k}}\|u\|_{\dot B_{2,1}^{\frac{d}{2}+1+k}} } \\ & ~~~~ +\|u\|_{\dot B_{2,1}^{\frac{d}{2}-1+k}}\|u\|_{\dot B_{2,1}^{\frac{d}{2}+1}} \\
    & \lesssim \|u\|_{\dot B_{2,1}^{\frac{d}{2}-1}}\|u\|_{\dot B_{2,1}^{\frac{d}{2}+1+k}}+\|u\|_{\dot B_{2,1}^{\frac{d}{2}-1+k}}\|u\|_{\dot B_{2,1}^{\frac{d}{2}+1}}.
\end{align*}

We have also: 
\begin{align*}
    \|\rho(w-u)\|_{\dot B_{2,1}^{\frac{d}{2}-1+k}} & \lesssim \|\rho\|_{L^\infty} \|w-u\|_{\dot B_{2,1}^{\frac{d}{2}-1+k}}+\|\rho\|_{\dot B_{2,1}^{\frac{d}{2}-1+k}}\|w-u\|_{L^\infty} \\
    & \lesssim \|\rho\|_{\dot B_{2,1}^{\frac{d}{2}}}(\|w-u\|_{\dot B_{2,1}^{\frac{d}{2}-1}}^l+\|(w,u)\|_{\dot B_{2,1}^{\frac{d}{2}+1+k}}^h) \\ & + (\|\rho\|_{\dot B_{2,1}^h}^l+\|\rho\|_{\dot B_{2,1}^{\frac{d}{2}+k}}^h)(\|w-u\|_{\dot B_{2,1}^{\frac{d}{2}-1}}^l+\|(w,u)\|_{\dot B_{2,1}^{\frac{d}{2}+1}}^h).
\end{align*}

We then have: 
\begin{multline}\label{estimée u kieme}
        \|u(t)\|_{\dot B_{2,1}^{\frac{d}{2}-1+k}}+\frac{1}{c_B}\int_0^t \|u\|_{\dot B_{2,1}^{\frac{d}{2}+1+k}}d\tau  \\ \leq \|u_0\|_{\dot B_{2,1}^{\frac{d}{2}-1+k}}  +C\int_0^t \|u\|_{\dot B_{2,1}^{\frac{d}{2}-1}}\|u\|_{\dot B_{2,1}^{\frac{d}{2}+1+k}} d\tau  +C\int_0^t \|u\|_{\dot B_{2,1}^{\frac{d}{2}-1+k}}\|u\|_{\dot B_{2,1}^{\frac{d}{2}+1}} d\tau \\ +C\int_0^t \|\rho\|_{\dot B_{2,1}^\frac{d}{2}}(\|w-u\|_{\dot B_{2,1}^{\frac{d}{2}-1}}^l+\|w,u\|_{\dot B_{2,1}^{\frac{d}{2}+1+k}}^h)d\tau \\ +C\int_0^t \|\rho\|_{\dot B_{2,1}^{\frac{d}{2}+k}}(\|w-u\|_{\dot B_{2,1}^{\frac{d}{2}-1}}^l+\|w,u\|_{\dot B_{2,1}^{\frac{d}{2}+1}}^h) d\tau.
\end{multline}

For the second equation of \eqref{Euler-Navier-Stokes2}, viewed in $\dot B_{2,1}^{\frac{d}{2}+1+k}$, we have, instead of \eqref{estimée sur u} : 
\begin{multline}\label{estimée w kieme}
    \|w(t)\|_{\dot B_{2,1}^{\frac{d}{2}+1+k}}+\int_0^t \|w\|_{\dot B_{2,1}^{\frac{d}{2}+1+k}} d\tau \\ \leq \|w_0\|_{\dot B_{2,1}^{\frac{d}{2}+1+k}}+c_B \int_0^t \|u\|_{\dot B_{2,1}^{\frac{d}{2}+1+k}} d\tau +C\int_0^t \|w\|_{\dot B_{2,1}^{\frac{d}{2}+1}}\|w\|_{\dot B_{2,1}^{\frac{d}{2}+1+k}} d\tau.
    \end{multline}

In a similar way to obtaining \eqref{estimée sur rho}, we have: 
$$\displaylines{\|\rho(t)\|_{\dot B_{2,1}^{\frac{d}{2}+k}}\leq \|\rho_0\|_{\dot B_{2,1}^{\frac{d}{2}+k}}+C\int_0^t \|\rho\|_{\dot B_{2,1}^{\frac{d}{2}}}\|w\|_{\dot B_{2,1}^{\frac{d}{2}+1+k}}d\tau \hfill\cr\hfill +C\int_0^t \|\rho\|_{\dot B_{2,1}^{\frac{d}{2}+k}}\|u\|_{\dot B_{2,1}^{\frac{d}{2}+1}} d\tau.}$$

By the smallness of $\int_0^T \|u\|_{\dot B_{2,1}^{\frac{d}{2}+1}} d\tau$ and by assuming $\int_0^T \|w\|_{\dot B_{2,1}^{\frac{d}{2}+1+k}} d\tau<+\infty$, we have \begin{equation}\label{estimée rho kieme}\|\rho\|_{L_T^\infty(\dot B_{2,1}^{\frac{d}{2}})}<+\infty.\end{equation}

By combining estimates \eqref{estimée u kieme}, \eqref{estimée w kieme} and \eqref{estimée rho kieme}, as well as a priori estimates \eqref{estimée théorème}, we deduce \eqref{inégalité fonctionnelle k}.
\end{proof}

As for obtaining \eqref{taux de décroissance L2}, proving \eqref{taux de décroissance L2 k} is based Nash's method again.

By interpolation, we have: 
$$\|v\|_{\dot B_{2,1}^{\frac{d}{2}-1+k}}^l \lesssim \left(\|v\|_{\dot B_{2,1}^{\frac{d}{2}+1+k}}^l\right)^{1-\theta_{k}} \|v\|_{\dot B_{2,\infty}^{-\frac{d}{2}}}^{\theta_{k}}, \quad \text{where} \ \theta_{k}\mathrel{\mathop:}=\frac{2}{d+k+1}.$$
We deduce: 
$$\mathcal{H}_{k}(t)\geq c_0 \mathcal{L}_k^{1-\theta_{k}}(t),$$

hence $$\frac{d}{dt}\mathcal{L}_k+c_0 \mathcal{L}_k^{\frac{1}{1-\theta_k}}\leq 0.$$

Thus we have \begin{equation}\label{estimée décroissance kieme besov}\mathcal{L}_k(t)\leq (1+c_0 t)^{1-\frac{1}{\theta_k}}\mathcal{L}_k(0)=(1+c_0 t)^{-\frac{d+k-1}{2}}\mathcal{L}_k(0).\end{equation}

By injection $\dot B_{2,1}^{\frac{d}{2}}(\R^d)\hookrightarrow L^\infty(\R^d)$, we deduce:

$$\|\nabla^{k-1} v(t)\|_{L^\infty}\lesssim \|v(t)\|_{\dot B_{2,1}^{\frac{d}{2}-1+k}}\leq C (1+c_0 t)^{-\frac{d+k-1}{2}}.$$

We also have by interpolation:
$$\|\nabla^{k-1} v\|_{L^2}\simeq \|v\|_{\dot B_{2,2}^{k-1}}\lesssim \|v\|_{\dot B_{2,\infty}^{-\frac{d}{2}}}^{1-\alpha_k}\|v\|_{\dot B_{2,1}^{\frac{d}{2}+k-1}}^{\alpha_k}, \quad \text{where} \ \alpha_k\mathrel{\mathop:}=\frac{\frac{d}{2}-1+k}{d-1+k}.$$

We then have by \eqref{estimée décroissance kieme besov}: 
$$\|\nabla^{k-1}v\|_{L^2}\leq C (1+c_0 t)^{-\left(\frac{k-1}{2}+\frac{d}{4}\right)}.$$

We can therefore deduce \eqref{taux de décroissance L2 k}.

\section*{Appendix}

\subsection{Study of a matrix}
We have the following lemma concerning the study of the exponential of the matrix associated with the linearised system \eqref{Euler-Navier-Stokes2}:
\begin{lemma}\label{semi-groupe}
For all $t>0$ and $|\xi|<1$, we have the following equality:
$$\exp\left(-t\begin{pmatrix}
    1 & -|\xi|^2 \\ 0 & |\xi|^2
\end{pmatrix}\right)=\begin{pmatrix}
    e^{-t} & g(t,|\xi|) \\ 0 & e^{-t|\xi|^2}
\end{pmatrix},$$ where $g(t,\xi)\mathrel{\mathop:}=-\frac{e^{-t}-e^{-t|\xi|^2}}{1-|\xi|^2}|\xi|^2$ satisfies the inequality $$|g(t,|\xi|)|\leq 2t e^{-t|\xi|^2}.$$
\end{lemma}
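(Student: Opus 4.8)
The plan is to compute the matrix exponential explicitly by exploiting the upper-triangular structure, and then to bound the off-diagonal term by an elementary inequality.

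First I would observe that $M(\xi) := \begin{pmatrix} 1 & -|\xi|^2 \\ 0 & |\xi|^2 \end{pmatrix}$ is upper triangular, hence so is $Y(t) := \exp(-tM(\xi))$, and the cleanest way to obtain it is to solve the Cauchy problem $Y'(t) = -M(\xi)\,Y(t)$, $Y(0) = \mathrm{Id}$, entry by entry. Writing $a := |\xi|^2$ (with $0 \le a < 1$ since $|\xi| < 1$), the second row gives at once $Y_{21} \equiv 0$ and $Y_{22}(t) = e^{-ta}$; the $(1,1)$ entry satisfies $Y_{11}' = -Y_{11}$ with $Y_{11}(0)=1$, so $Y_{11}(t) = e^{-t}$; and the $(1,2)$ entry solves $Y_{12}' + Y_{12} = a\,e^{-ta}$ with $Y_{12}(0) = 0$. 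Using the integrating factor $e^t$ one finds
$$ Y_{12}(t) = a\,e^{-t}\int_0^t e^{(1-a)s}\,ds = \frac{a\left(e^{-ta} - e^{-t}\right)}{1-a} = g(t,|\xi|), $$
which is exactly the asserted form of $\exp(-tM(\xi))$. (Equivalently, one may diagonalize $M(\xi)$, whose eigenvalues $1$ and $|\xi|^2$ are distinct for $|\xi|<1$, and multiply out $P\,\mathrm{diag}(e^{-t},e^{-t|\xi|^2})\,P^{-1}$; this yields the same expression for $g$.)

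It then remains to establish $|g(t,|\xi|)| \le 2t\,e^{-t|\xi|^2}$. Since $0 \le a < 1$ we have $e^{-ta} \ge e^{-t}$, so $|g(t,|\xi|)| = \dfrac{a\,(e^{-ta}-e^{-t})}{1-a} = \dfrac{a\,e^{-ta}\left(1 - e^{-t(1-a)}\right)}{1-a}$. Applying the elementary inequality $1 - e^{-x} \le x$ for $x\ge 0$ with $x = t(1-a)\ge 0$ bounds the quotient $\dfrac{1-e^{-t(1-a)}}{1-a}$ by $t$, and since $a \le 1$ this gives $|g(t,|\xi|)| \le a\,t\,e^{-ta} \le t\,e^{-t|\xi|^2} \le 2t\,e^{-t|\xi|^2}$, as required.

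I do not anticipate any real obstacle: the triangular structure makes the exponential fully explicit, and the only point deserving (minor) care is the estimate on $g$, where one must factor out $e^{-t|\xi|^2}$ \emph{before} invoking $1-e^{-x}\le x$, so as not to sacrifice the parabolic decay; the factor $2$ in the statement is comfortable slack.
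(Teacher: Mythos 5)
Your proposal is correct. The paper states this lemma in the appendix without proof, and your argument is precisely the standard verification it implicitly relies on: the triangular structure reduces $\exp(-tM)$ to three scalar ODEs, the $(1,2)$ entry is computed by an integrating factor and matches $g$, and the bound follows from factoring out $e^{-t|\xi|^2}$ before applying $1-e^{-x}\le x$ for $x=t(1-|\xi|^2)\ge 0$. Your estimate in fact yields $|g(t,|\xi|)|\le t\,e^{-t|\xi|^2}$, slightly sharper than the stated bound with the factor $2$.
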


\subsection{Useful Lemmas and Estimates}
Here we recall some classical lemmas on differential equations, integration, and some basic properties on Besov spaces and product estimates that have been used repeatedly in the article.
\begin{lemma}\label{lemme edo}
Let $\displaystyle X:[0,T]\rightarrow \R_+$ be a continuous function such that $X^2$ is differentiable. Suppose there is a constant $c\geq 0$ and a measurable function $A:[0,T]\rightarrow \R_+$ such that $$\frac{1}{2}\frac{d}{dt}X^2+c X^2\leq A X \quad a.e.\ \text{on} \ [0,T].$$
Then, for all $t\in [0,T]$, we have: $$X(t)+c\int_0^t X(\tau)\,d\tau\leq X_0 +\int_0^t A(\tau) \,d\tau.$$
\end{lemma}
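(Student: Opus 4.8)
The plan is to reduce to the case where $X$ is bounded away from $0$ by the classical regularization $X_\varepsilon := \sqrt{X^2 + \varepsilon^2}$ for $\varepsilon > 0$, and then to pass to the limit $\varepsilon \to 0$. Since $X^2$ is absolutely continuous on $[0,T]$ and $y \mapsto \sqrt{y + \varepsilon^2}$ is Lipschitz on $\R_+$, the composition $X_\varepsilon$ is absolutely continuous as well, with $X_\varepsilon \geq \varepsilon > 0$, so that for almost every $t$,
$$
\frac{d}{dt} X_\varepsilon = \frac{\tfrac12 \frac{d}{dt} X^2}{X_\varepsilon}.
$$

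The key step is then to insert the hypothesis $\tfrac12 \frac{d}{dt} X^2 \leq AX - cX^2$ into this identity. Using $0 \leq X \leq X_\varepsilon$ together with $A \geq 0$ gives $\frac{AX}{X_\varepsilon} \leq A$, while $\frac{X^2}{X_\varepsilon} = X_\varepsilon - \frac{\varepsilon^2}{X_\varepsilon} \geq X_\varepsilon - \varepsilon$ together with $c \geq 0$ gives $-c\frac{X^2}{X_\varepsilon} \leq -cX_\varepsilon + c\varepsilon$. Hence
$$
\frac{d}{dt} X_\varepsilon + c X_\varepsilon \leq A + c\varepsilon \quad \text{a.e. on } [0,T].
$$
Integrating this differential inequality in time (which is licit because $X_\varepsilon$ is absolutely continuous, so the fundamental theorem of calculus applies) yields, for every $t \in [0,T]$,
$$
X_\varepsilon(t) + c \int_0^t X_\varepsilon(\tau)\, d\tau \leq X_\varepsilon(0) + \int_0^t A(\tau)\, d\tau + c\varepsilon t.
$$

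Finally I would let $\varepsilon \to 0$. Since $X \leq X_\varepsilon \leq X + \varepsilon$, one has $X_\varepsilon(t) \to X(t)$ and $X_\varepsilon(0) \to X_0$, and $\int_0^t X_\varepsilon \to \int_0^t X$ by monotone (or dominated) convergence, while the remainder $c\varepsilon t$ vanishes; this gives exactly the asserted inequality. The only point requiring a little care is the regularity needed to run the fundamental theorem of calculus for $X_\varepsilon$ — it is enough that $X^2$ be absolutely continuous, which holds in every application in this paper, where $X^2 = \|\dot\Delta_j u\|_{L^2}^2$ (or an analogous quantity) is $C^1$ in time — and the passage through $X_\varepsilon$ is precisely what circumvents the possible lack of differentiability of $X$ itself at its zeros, where $AX - cX^2$ need not control $\frac{d}{dt}X$.
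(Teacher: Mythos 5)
Your proof is correct: the regularization $X_\varepsilon=\sqrt{X^2+\varepsilon^2}$, the bounds $AX/X_\varepsilon\le A$ and $-cX^2/X_\varepsilon\le -cX_\varepsilon+c\varepsilon$, integration, and the limit $\varepsilon\to 0$ are exactly the standard argument, and the paper itself offers no proof of this lemma (it is recalled as classical, cf.\ \cite{RD}), so there is nothing to diverge from. The only delicate point, which you correctly identify and flag, is that integrating the differential inequality uses absolute continuity of $X^2$ (hence of $X_\varepsilon$) rather than mere pointwise differentiability as literally stated in the lemma; this is harmless here since in every application $X^2=\|\dot\Delta_j u\|_{L^2}^2$ is $C^1$ in time for the smooth (approximate) solutions considered.
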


The following result is classic: see for example \cite{RD}.
\begin{lemma}\label{lemme edo2}
    Let $T>0$. Let $\mathcal{L}:[0,T]\to \R$ and $H:[0,T]\to \R$ be two continuous non-negative functions on $[0,T]$ such that $\mathcal{L}(0)< \alpha$ with $\alpha$ a sufficiently small non-negative real number and  $$\mathcal{L}(t)+c\int_0^t \mathcal{H}(\tau)d\tau \leq \mathcal{L}_0+C\int_0^t \mathcal{L}(\tau)\mathcal{H}(\tau)d\tau.$$ Then, for all $t\in[0,T]$, we have: $$\mathcal{L}(t)+\frac{c}{2}\int_0^t \mathcal{H}(\tau)d\tau \leq \mathcal{L}(0).$$
\end{lemma}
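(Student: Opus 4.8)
The plan is to run a standard continuity (bootstrap) argument. The crucial structural feature is that the only nonlinear term, $C\int_0^t\mathcal{L}\mathcal{H}$, carries the very same dissipation $\mathcal{H}$ that already appears on the left-hand side with the fixed coefficient $c$; hence as soon as $\mathcal{L}$ is small enough that $C\mathcal{L}\le c/2$, that term can be absorbed. Throughout I write $\mathcal{L}_0=\mathcal{L}(0)$.

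First I would fix the smallness threshold by taking $\alpha\le \tfrac{c}{2C}$, so that the assumption $\mathcal{L}(0)<\alpha$ forces $C\,\mathcal{L}(0)<\tfrac{c}{2}$. Then I introduce
$$
T^\star\mathrel{\mathop:}=\sup\Bigl\{\tau\in[0,T]\ \Big|\ C\,\mathcal{L}(s)\le \tfrac{c}{2}\ \text{ for all } s\in[0,\tau]\Bigr\}.
$$
Because $\mathcal{L}$ is continuous and $C\mathcal{L}(0)<c/2$, this set is a nonempty interval containing a neighbourhood of $0$, so $T^\star>0$; it is moreover closed, since by continuity $C\mathcal{L}(T^\star)\le c/2$ as soon as $C\mathcal{L}(s)\le c/2$ for all $s<T^\star$, so it equals $[0,T^\star]$.

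Next, on $[0,T^\star]$ I would insert the pointwise bound $C\,\mathcal{L}(\tau)\le\tfrac{c}{2}$ into the hypothesis and use $\mathcal{H}\ge0$: for all $t\in[0,T^\star]$,
$$
\mathcal{L}(t)+c\int_0^t\mathcal{H}(\tau)\,d\tau\le \mathcal{L}(0)+\int_0^t C\,\mathcal{L}(\tau)\mathcal{H}(\tau)\,d\tau\le \mathcal{L}(0)+\frac{c}{2}\int_0^t\mathcal{H}(\tau)\,d\tau,
$$
whence $\mathcal{L}(t)+\tfrac{c}{2}\int_0^t\mathcal{H}\,d\tau\le\mathcal{L}(0)$, and in particular $\mathcal{L}(t)\le\mathcal{L}(0)<\alpha\le\tfrac{c}{2C}$ on $[0,T^\star]$. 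Thus the defining constraint $C\mathcal{L}\le c/2$ holds there with a strict margin, in particular at $T^\star$. If $T^\star<T$, the continuity of $\mathcal{L}$ then yields $\delta>0$ with $C\mathcal{L}(s)\le c/2$ on $[0,T^\star+\delta]\cap[0,T]$, contradicting the maximality of $T^\star$. Hence $T^\star=T$, and the inequality obtained above holds on all of $[0,T]$, which is exactly the assertion.

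There is essentially no obstacle here: the only point demanding care is the quantitative choice $\alpha\le c/(2C)$, which guarantees that the a priori control $C\mathcal{L}\le c/2$ is recovered with a strict margin, so that the open–closed dichotomy forces $T^\star=T$. The continuity of $\mathcal{L}$ and $\mathcal{H}$ (needed only so that the set $T^\star$ is well behaved and the integrals make sense) is the sole regularity used; alternatively one could phrase the same conclusion as a Grönwall-type comparison on the bounded function $\mathcal{L}$, but the continuity argument above is the most transparent.
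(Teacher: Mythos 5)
Your proof is correct and follows essentially the same route as the paper: a continuity (bootstrap) argument in which the smallness of $\mathcal{L}$ (threshold $c/(2C)$) lets you absorb $C\int_0^t\mathcal{L}\mathcal{H}$ into the dissipation term, yielding the improved bound $\mathcal{L}(t)+\tfrac{c}{2}\int_0^t\mathcal{H}\,d\tau\le\mathcal{L}(0)$ with a strict margin, so that the maximal time equals $T$. The only difference is cosmetic: you bootstrap on the condition $C\mathcal{L}\le c/2$ directly, whereas the paper bootstraps on $\mathcal{L}\le\alpha$ with $\alpha<c/(2C)$.
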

\begin{proof}
    Let $\displaystyle\alpha\in ]0,\frac{c}{2C}[$. We set $T_0=\underset{T_1\in[0,T]}{\sup}\{\underset{t\in[0,T_1]}{\sup}\mathcal{L}(t)\leq \alpha \}$. This supremum exists since the above set is non-empty ($0$ belongs to this set). Since $\mathcal{L}$ is continuous, we have $T_0>0$. Consequently $$\displaylines{\mathcal{L}(T_0)+c\int_0^{T_0}\mathcal{H}(\tau)d\tau \leq \mathcal{L}(0)+C\int_0^{T_0}\mathcal{H}(\tau)\mathcal{L}(\tau)d\tau\leq \mathcal{L}(0)+\alpha C\int_0^{T_0}\mathcal{H}(\tau)d\tau \hfill\cr\hfill \leq \mathcal{L}(0)+\frac{c}{2}\int_0^{T_0}\mathcal{H}(\tau)d\tau.}$$

    We have also: $$\mathcal{L}(T_0)+\frac{c}{2}\int_0^{T_0} \mathcal{H}(\tau)d\tau \leq \mathcal{L}(0).$$

    As $\mathcal{L}(t)\leq \mathcal{L}(T_0)$ for all $t\in[0,T_0]$ and $\displaystyle \int_0^t \mathcal{H}(\tau) d\tau \leq \int_0^{T_0} \mathcal{H}(\tau) d\tau$, we obtain by the previous inequality: $$\mathcal{L}(t)< \alpha \quad \forall t\in[0,T_0].$$

    By continuity of $\mathcal{L}$, we must have $T_0=T$, so the inequality in the statement holds for all $t\in[0,T]$.
\end{proof}
We leave it to the reader to check the following lemma.
\begin{lemma}\label{int exp}    Let $\beta>0$ and $c_0>0$. Then there exists $C$ depending only on $\beta>0$ and $c_0>0$ such that for all $t\geq 0$: $$\int_0^t \left(\frac{1+c_0 t}{1+c_0 \tau}\right)^\beta e^{-c(t-\tau)}d\tau\leq C.$$
\end{lemma}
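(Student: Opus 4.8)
The plan is to cut the integral at the midpoint $\tau=t/2$, writing
\[
\int_0^t\Bigl(\tfrac{1+c_0t}{1+c_0\tau}\Bigr)^{\beta}e^{-c(t-\tau)}\,d\tau
=\int_0^{t/2}\Bigl(\tfrac{1+c_0t}{1+c_0\tau}\Bigr)^{\beta}e^{-c(t-\tau)}\,d\tau+\int_{t/2}^{t}\Bigl(\tfrac{1+c_0t}{1+c_0\tau}\Bigr)^{\beta}e^{-c(t-\tau)}\,d\tau,
\]
and to bound each piece by a constant depending only on $\beta$, $c_0$ and the (fixed, positive) constant $c$ in the exponent. On the first piece one forgoes the exponential gain near the diagonal but pays only a polynomial price: since $0\le\tau\le t/2$ gives $1+c_0\tau\ge1$ and $t-\tau\ge t/2$, that contribution is at most $\tfrac t2(1+c_0t)^{\beta}e^{-ct/2}$, which is a continuous function of $t\ge0$ vanishing as $t\to\infty$ (a polynomial times a decaying exponential); hence $C_1:=\sup_{t\ge0}\tfrac t2(1+c_0t)^{\beta}e^{-ct/2}<\infty$ and depends only on $\beta,c_0,c$.

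On the second piece one instead keeps the ratio under control: for $t/2\le\tau\le t$ one has $1+c_0\tau\ge1+c_0t/2\ge\tfrac12(1+c_0t)$, so that $\bigl(\tfrac{1+c_0t}{1+c_0\tau}\bigr)^{\beta}\le2^{\beta}$, and therefore this contribution is at most $2^{\beta}\int_{t/2}^{t}e^{-c(t-\tau)}\,d\tau\le2^{\beta}\int_0^{\infty}e^{-cs}\,ds=2^{\beta}/c=:C_2$. Summing the two bounds yields the lemma with $C=C_1+C_2$, which indeed depends only on $\beta$, $c_0$ (and $c$).

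There is essentially no obstacle here: the only mildly delicate point is the uniform-in-time control of the polynomial-times-exponential term $C_1$, which one can either treat via the substitution $x=ct/2$ and the elementary inequality $x^{\beta}e^{-x}\le(\beta/e)^{\beta}$, or simply by noting continuity on $[0,\infty)$ together with decay at infinity. Everything else reduces to the triangle inequality and the trivial estimate $1+c_0t/2\ge\tfrac12(1+c_0t)$.
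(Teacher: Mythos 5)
Your proof is correct; the paper itself gives no argument for this lemma (it is explicitly left to the reader), and your split of the integral at $\tau=t/2$ — bounding the far piece by a polynomial-times-decaying-exponential and the near piece by the uniform ratio bound $1+c_0\tau\geq\tfrac12(1+c_0t)$ — is exactly the standard verification one would expect. Your remark that the constant also depends on the exponent constant $c$ (a slight mismatch with the lemma's statement, where only $\beta$ and $c_0$ are mentioned) is well taken and harmless, since in the paper's applications $c$ is either $c_0$ or a fixed universal constant.
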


The following lemmas are classical results, see for example \cite{BCD}.

\begin{lemma} \label{Produit espace de Besov}
For $d\geq 1$, $s,s'\leq \frac{d}{2}$ satisfying $s+s'>0$ the numerical product extends into a continuous application from $\dot B_{2,1}^{s}(\R^d)\times \dot B_{2,1}^{s'}(\R^d)$ to $\dot B_{2,1}^{s+s'-\frac{d}{2}}(\R^d)$. 

For the case $s+s'=0$, the numerical product extends into a continuous application from $\dot B_{2,1}^{s}(\R^d)\times \dot B_{2,\infty}^{s'}(\R^d)$ to $\dot B_{2,\infty}^{s+s'-\frac{d}{2}}(\R^d)$.

$ \dot B_{2,1}^{\frac{d}{2}}$ is a multiplicative algebra for $d\geq 1$.

For $d\geq 1$, we have for $(u,v)\in \dot B_{2,1}^{\frac{d}{2}}\cap  \dot B_{2,1}^{\frac{d}{2}+1}$ that $uv\in  \dot B_{2,1}^{\frac{d}{2}+1}$ and the following inequality: $$\|uv\|_{ \dot B_{2,1}^{\frac{d}{2}+1}}\lesssim \|u\|_{ \dot B_{2,1}^{\frac{d}{2}}}\|v\|_{ \dot B_{2,1}^{\frac{d}{2}+1}}+\|u\|_{ \dot B_{2,1}^{\frac{d}{2}+1}}\|v\|_{ \dot B_{2,1}^{\frac{d}{2}}}.$$
\end{lemma}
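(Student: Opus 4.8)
The plan is to reduce every assertion to Bony's homogeneous paraproduct decomposition
$$uv=\dot T_u v+\dot T_v u+\dot R(u,v),\qquad \dot T_u v:=\sum_{j}\dot S_{j-1}u\,\dot\Delta_j v,\quad \dot R(u,v):=\sum_j\sum_{|\nu|\leq 1}\dot\Delta_j u\,\dot\Delta_{j+\nu}v,$$
and to estimate the three pieces separately using only Bernstein's inequality \eqref{constante de Bernstein} and the quasi-orthogonality of the Littlewood--Paley blocks. The two localization facts that drive the argument are standard: $\dot S_{j-1}u\,\dot\Delta_j v$ has Fourier support in a dyadic annulus of size $\sim 2^j$, so $\dot\Delta_k(\dot S_{j-1}u\,\dot\Delta_j v)$ vanishes unless $|k-j|\leq N_0$; and $\dot\Delta_j u\,\dot\Delta_{j+\nu}v$ has Fourier support in a ball of radius $\sim 2^j$, so $\dot\Delta_k(\dot\Delta_j u\,\dot\Delta_{j+\nu}v)$ vanishes unless $k\leq j+N_1$, for fixed universal $N_0,N_1$.

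First I would treat the paraproduct $\dot T_u v$. By Bernstein, $\|\dot S_{j-1}u\|_{L^\infty}\lesssim\sum_{j'\leq j-2}2^{j'd/2}\|\dot\Delta_{j'}u\|_{L^2}=\sum_{j'\leq j-2}2^{j'(d/2-s)}\bigl(2^{j's}\|\dot\Delta_{j'}u\|_{L^2}\bigr)\lesssim 2^{j(d/2-s)}\|u\|_{\dot B^s_{2,1}}$, where the geometric sum converges precisely because $s\leq d/2$. Combining this with Hölder's inequality $\|\dot\Delta_k(\dot S_{j-1}u\,\dot\Delta_j v)\|_{L^2}\lesssim\|\dot S_{j-1}u\|_{L^\infty}\|\dot\Delta_j v\|_{L^2}$, summing over $|k-j|\leq N_0$, multiplying by $2^{k(s+s'-d/2)}$ and summing over $k$, one gets $\|\dot T_u v\|_{\dot B^{s+s'-d/2}_{2,1}}\lesssim\|u\|_{\dot B^s_{2,1}}\|v\|_{\dot B^{s'}_{2,1}}$; the term $\dot T_v u$ is handled symmetrically using $s'\leq d/2$. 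For the remainder, Bernstein applied to the ball localization gives $\|\dot\Delta_k(\dot\Delta_j u\,\dot\Delta_{j+\nu}v)\|_{L^2}\lesssim 2^{kd/2}\|\dot\Delta_j u\|_{L^2}\|\dot\Delta_{j+\nu}v\|_{L^2}$, hence
$$2^{k(s+s'-d/2)}\|\dot\Delta_k\dot R(u,v)\|_{L^2}\lesssim\sum_{j\geq k-N_1}2^{(k-j)(s+s')}\bigl(2^{js}\|\dot\Delta_j u\|_{L^2}\bigr)\bigl(2^{js'}\|\dot\Delta_{j+\nu}v\|_{L^2}\bigr),$$
and since $s+s'>0$ the geometric factor $\sum_{j\geq k-N_1}2^{(k-j)(s+s')}$ is finite; a discrete Young inequality then yields $\|\dot R(u,v)\|_{\dot B^{s+s'-d/2}_{2,1}}\lesssim\|u\|_{\dot B^s_{2,1}}\|v\|_{\dot B^{s'}_{2,1}}$. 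Adding the three bounds proves the first assertion.

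For the endpoint $s+s'=0$ one repeats the same computation, but now only the summation over $j$ survives in the remainder and only the supremum over $k$ of the localized paraproduct pieces can be controlled, so the target exponent $s+s'-d/2=-d/2$ has to be taken with the $\ell^\infty$, i.e. $\dot B_{2,\infty}$, summation and one factor is measured in $\dot B^{s'}_{2,\infty}$ — exactly the stated weakened conclusion. The multiplicative-algebra property of $\dot B^{d/2}_{2,1}$ is the special case $s=s'=d/2$ of the first part (here $d/2\leq d/2$ and $d>0$). Finally, for the tame estimate in $\dot B^{d/2+1}_{2,1}$ I would run the paraproduct decomposition once more: in $\dot T_u v$ bound $\|\dot S_{j-1}u\|_{L^\infty}\lesssim\|u\|_{L^\infty}\lesssim\|u\|_{\dot B^{d/2}_{2,1}}$ and keep all the regularity on $v$, giving $\|\dot T_u v\|_{\dot B^{d/2+1}_{2,1}}\lesssim\|u\|_{\dot B^{d/2}_{2,1}}\|v\|_{\dot B^{d/2+1}_{2,1}}$; symmetrically $\|\dot T_v u\|_{\dot B^{d/2+1}_{2,1}}\lesssim\|u\|_{\dot B^{d/2+1}_{2,1}}\|v\|_{\dot B^{d/2}_{2,1}}$; and for $\dot R(u,v)$ one bounds $2^{jd/2}\|\dot\Delta_j u\|_{L^2}\leq\|u\|_{\dot B^{d/2}_{2,1}}$ uniformly in $j$ and sums the $\ell^1$ sequence $2^{j(d/2+1)}\|\dot\Delta_{j+\nu}v\|_{L^2}$, using once more $d+1>0$ for the convergence of $\sum_{j\geq k-N_1}2^{(k-j)(d+1)}$. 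The only point requiring genuine care is the bookkeeping of $\ell^1$ versus $\ell^\infty$ summability at the critical exponents $s=d/2$, $s'=d/2$ and $s+s'=0$: it is there that the hypotheses are sharp and that the conclusion must be phrased with $\dot B_{2,\infty}$ rather than $\dot B_{2,1}$. Everything else is a routine application of Bernstein's and Young's inequalities, so no new difficulty arises beyond the standard Littlewood--Paley calculus of \cite{BCD}, to which one may simply refer for the detailed verification of the endpoint configurations.
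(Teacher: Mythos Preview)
Your proof is correct and follows exactly the standard Bony paraproduct approach; the paper itself gives no proof of this lemma, simply stating that ``the following lemmas are classical results, see for example \cite{BCD}.'' Since \cite{BCD} proves these product laws precisely via the paraproduct decomposition you reproduce, your argument is in full agreement with the reference the paper defers to.
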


\begin{remark}
These continuity properties of the product can easily be generalized to the spaces $\tilde{L}_T^q(\dot B_{p,r}^s)$ defined in \eqref{norme tilde}. The general principle is that the time exponent $q$ behaves according to Hölder's inequality.
\end{remark}
\begin{lemma}\label{commutateur}
    Let $v\in \dot B_{2,1}^{\frac{d}{2}-1}$ and $f\in\dot B_{2,1}^s$ with $s\in ]-\frac{d}{2},\frac{d}{2}+1[$. There is a constant $C$ depending on $s$ and $d$, and a sequence $(c_j)_{j\in\Z}$ satisfying $\sum_{j\in\Z}c_j=1$ such that 
    $$\|[\dot\Delta_j,v\cdot\nabla]f\|_{L^2}\leq C c_j 2^{-js}\|\nabla v\|_{\dot B_{2,1}^{\frac{d}{2}}}\|f\|_{\dot B_{2,1}^s}.$$
\end{lemma}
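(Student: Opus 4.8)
The plan is to run the classical Bony–decomposition argument for commutators (cf.\ \cite[Chap.~2]{BCD}), which I now sketch. Using $\dot\Delta_j\partial_m=\partial_m\dot\Delta_j$, write (with summation over $m\in\{1,\dots,d\}$ understood)
$$[\dot\Delta_j,v\cdot\nabla]f=\dot\Delta_j\big(v^m\partial_m f\big)-v^m\partial_m\dot\Delta_j f,$$
and expand each product $v^m\partial_m g$ à la Bony as $T_{v^m}\partial_m g+T_{\partial_m g}v^m+R(v^m,\partial_m g)$, first with $g=f$ and then with $g=\dot\Delta_j f$. Subtracting gives the splitting
$$[\dot\Delta_j,v\cdot\nabla]f=[\dot\Delta_j,T_{v^m}]\partial_m f+\big(\dot\Delta_j T_{\partial_m f}v^m-T_{\partial_m\dot\Delta_j f}v^m\big)+\big(\dot\Delta_j R(v^m,\partial_m f)-R(v^m,\partial_m\dot\Delta_j f)\big)=:\mathcal R^1_j+\mathcal R^2_j+\mathcal R^3_j.$$
Only $\mathcal R^1_j$ carries a genuine commutator structure; $\mathcal R^2_j$ and $\mathcal R^3_j$ will be estimated piece by piece.

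For $\mathcal R^1_j$, only the blocks $\dot S_{j'-1}v^m$ with $|j-j'|\le 4$ contribute. Writing $[\dot\Delta_j,a]g(x)=\int 2^{jd}h\big(2^j(x-y)\big)\big(a(y)-a(x)\big)g(y)\,dy$ with $h=\mathcal F^{-1}\varphi$, and using the first-order Taylor bound $|a(x)-a(y)|\le|x-y|\,\|\nabla a\|_{L^\infty}$ together with $\int|z|\,|h(z)|\,dz<\infty$, one gets $\|[\dot\Delta_j,a]g\|_{L^2}\lesssim 2^{-j}\|\nabla a\|_{L^\infty}\|g\|_{L^2}$. Applying this with $a=\dot S_{j'-1}v^m$ (so $\|\nabla a\|_{L^\infty}\lesssim\|\nabla v\|_{\dot B_{2,1}^{\frac d2}}$ via $\dot B_{2,1}^{\frac d2}\hookrightarrow L^\infty$ and summation) and $g=\partial_m\dot\Delta_{j'}f$ (so $\|g\|_{L^2}\lesssim 2^{j'}\|\dot\Delta_{j'}f\|_{L^2}$), using $2^{-j}2^{j'}\simeq 1$, then multiplying by $2^{js}$ and using $2^{js}\simeq 2^{j's}$ for $|j-j'|\le 4$, yields $2^{js}\|\mathcal R^1_j\|_{L^2}\lesssim c^1_j\,\|\nabla v\|_{\dot B_{2,1}^{\frac d2}}\|f\|_{\dot B_{2,1}^s}$ with $(c^1_j)$ an $\ell^1(\Z)$-summable sequence; no restriction on $s$ enters here.

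For $\mathcal R^2_j$ and $\mathcal R^3_j$ I would argue directly. In $\mathcal R^2_j$, the paraproduct $T_{\partial_m f}v^m$ is frequency-localized, so $\dot\Delta_j$ retains only $O(1)$ blocks; bounding $\|\dot S_{j'-1}\partial_m f\|_{L^\infty}$ by Bernstein produces a geometric sum that converges precisely because $s<\tfrac d2+1$, while the term $T_{\partial_m\dot\Delta_j f}v^m$ is handled by Bernstein on the single block $\dot\Delta_j f$ together with the tail $\sum_{j'>j+1}\|\dot\Delta_{j'}v^m\|_{L^2}$, requiring no condition on $s$. In $\mathcal R^3_j$, the remainder couples only frequencies $\gtrsim 2^j$, and using $\|\dot\Delta_j(\dot\Delta_k v^m\,\dot\Delta_{k'}\partial_m f)\|_{L^2}\lesssim 2^{jd/2}\|\dot\Delta_k v^m\|_{L^2}\|\dot\Delta_{k'}\partial_m f\|_{L^2}$ (for $|k-k'|\le 1$, $k\gtrsim j$) one obtains, after multiplication by $2^{js}$, a sum of the form $\sum_{k\gtrsim j}2^{(j-k)(\frac d2+s)}a_k b_{k'}$ built from the $\ell^1$-normalized Littlewood–Paley coefficients of $\nabla v$ and $f$, whose $\ell^1(\Z)$-summability in $j$ holds exactly when $s>-\tfrac d2$; the possible non-vanishing of $\dive v$ is irrelevant throughout. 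Collecting $c_j:=c^1_j+c^2_j+c^3_j$ and renormalizing so that $\sum_{j\in\Z}c_j=1$ gives the lemma. The main obstacle is not the genuine commutator term $\mathcal R^1_j$, which is routine, but the frequency bookkeeping in $\mathcal R^2_j$ and $\mathcal R^3_j$: one must extract the exact weight $2^{-js}$ with $\ell^1$-in-$j$ summability, and it is there that both endpoint restrictions $-\tfrac d2<s<\tfrac d2+1$ are used.
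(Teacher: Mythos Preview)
The paper does not give its own proof of this lemma; it is stated in the appendix as a classical result with a reference to \cite{BCD}. Your Bony-decomposition argument---splitting the commutator into $[\dot\Delta_j,T_{v^m}]\partial_m f$ plus the two ``non-commutator'' pieces coming from $T_{\partial_m f}v^m$ and the remainder, handling $\mathcal R^1_j$ via the first-order Taylor bound on the convolution kernel, and using the endpoint restrictions $s<\tfrac d2+1$ and $s>-\tfrac d2$ on $\mathcal R^2_j$ and $\mathcal R^3_j$ respectively---is exactly the standard proof from \cite[Lemma~2.100]{BCD} to which the paper defers, so your sketch is correct and aligned with the intended reference.
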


\end{document}